\documentclass{amsart}

\usepackage{amssymb}
\usepackage{amsfonts}
\usepackage{amsmath}
\usepackage{amsthm}
\usepackage{graphicx}
\usepackage{mathrsfs}
\usepackage{dsfont}
\usepackage{amscd}
\usepackage{multirow}
\usepackage{palatino}
\usepackage{mathpazo}
\usepackage{mathtools}
\usepackage[all]{xy}
\usepackage[scaled]{helvet} 
\usepackage{courier} 
\normalfont
\usepackage[T1]{fontenc}
\usepackage{calligra}
\usepackage{verbatim}
\usepackage[usenames,dvipsnames]{color}
\usepackage[colorlinks=true,linkcolor=Blue,citecolor=Violet]{hyperref}
\usepackage{cite}
\usepackage{enumitem}

\makeatletter
\@namedef{subjclassname@2010}{%
  \textup{2010} Mathematics Subject Classification}
\makeatother

\theoremstyle{plain}
\newtheorem{theorem}{Theorem}[section]

\newtheorem{corollary}[theorem]{Corollary}

\newtheorem{lemma}[theorem]{Lemma}
\newtheorem{proposition}[theorem]{Proposition}

\newtheorem{theorem-definition}[theorem]{Theorem-Definition}

\theoremstyle{definition}
\newtheorem{definition}[theorem]{Definition}

\newtheorem{convention}[theorem]{Convention}

\theoremstyle{remark}

\newtheorem{example}[theorem]{Example}

\newtheorem{remark}[theorem]{Remark}

\numberwithin{equation}{section}
 
\newcommand{\A}{{\mathcal{A}}}
\newcommand{\wt}[1]{{\widetilde{#1}}}
\newcommand{\un}[1]{{1_{\widetilde{#1}}}}

\newcommand{\ee}{{\varepsilon}}
\newcommand{\la}{{\lambda}}
\newcommand{\La}{{\Lambda}}
\newcommand{\al}{{\alpha}}
\newcommand{\be}{{\beta}}
\newcommand{\ga}{{\gamma}}

\newcommand{\om}{{\omega}}

\newcommand{\de}{{\delta}}

\newcommand{\vhi}{{\varphi}}

\newcommand{\dd}{{\mathsf{d}}}

\newcommand{\N}{{\mathds{N}}}

\newcommand{\R}{{\mathds{R}}}
\newcommand{\C}{{\mathds{C}}}

\newcommand{\M}{{\mathfrak{M}}}

\newcommand{\dom}[1]{{\operatorname*{dom}({#1})}}

\renewcommand{\geq}{\geqslant}
\renewcommand{\leq}{\leqslant}

\allowdisplaybreaks[1]

\makeatletter
\newcommand{\vast}{\bBigg@{4}}
\newcommand{\Vast}{\bBigg@{5}}
\makeatother

\begin{document}

\title[Bures metric, quantum metric, non-unital C*-algebras]{The Bures metric and the quantum metric on the density space of a C*-algebra: the non-unital case}
\author[K. Aguilar]{Konrad Aguilar}
\address{Department of Mathematics and Statistics, Pomona College, 610 N. College Ave., Claremont, CA 91711} 
\email{konrad.aguilar@pomona.edu}
\urladdr{\url{https://aguilar.sites.pomona.edu}}
\thanks{The  first  author is partially supported by NSF grant DMS-2316892. The fourth author is partially supported by the Research Council of Norway (RCN), project no.~345433.}
 
\author[K. Behera]{Karina Behera}
\address{Department of Mathematics and Statistics, Pomona College, 610 N. College Ave., Claremont, CA 91711} 
\email{kaba2021@mymail.pomona.edu}

\author[K. von Bornemann Hjelmborg]{Katrine von Bornemann Hjelmborg}
\address{Department of Mathematics and Computer Science, The University of Southern Denmark, Campusvej 55, DK-5230 Odense M, Denmark}
\email{kahje@imada.sdu.dk}

\author[T. Omland]{Tron Omland}
\address{Norwegian National Security Authority (NSM) and Department of Mathematics, University of Oslo, Norway}
 \email{tron.omland@gmail.com}

 \author[G. Wickham]{Gregory Wickham}
 \address{Department of Mathematics, Harvey Mudd College, 320 E. Foothill Blvd., Claremont, CA 91711}
 \email{gwickham@g.hmc.edu}

  \author[N. Wu]{Nicole Wu}
 \address{Department of Mathematics, Harvey Mudd College, 320 E. Foothill Blvd., Claremont, CA 91711}
 \email{nwu@hmc.edu}

 \author[A. M. Yassine]{Adam M. Yassine}
 \address{Department of Mathematics and Statistics, Pomona College, 610 N. College Ave., Claremont, CA 91711}
 \email{Adam.Yassine@pomona.edu}

\subjclass[2000]{Primary:  46L89, 46L30, 58B34.}
\keywords{Bures metric, density space, compact quantum metric spaces, quantum locally compact metric spaces, C*-algebras, non-unital C*-algebras, quantized interval, Heine-Borel theorem}

\maketitle

\begin{abstract}
Building off work of Farenick and Rahaman, we extend the definition of the density space and the Bures metric to the setting of non-unital C*-algebras equipped with a faithful trace and prove that the Bures metric is also a metric in this case and show that its topology is weaker than the topology induced by the C*-norm. Furthermore, we prove a Heine-Borel type theorem for C*-algebras   and the density space. In particular, we prove that for any C*-algebra (unital or non-unital) equipped with a faithful trace, the density space equipped with the Bures metric topology is not compact if and only if the C*-algebra is infinite dimensional. We also exhibit several examples of sequences that have no converging sequence in the unital and non-unital case including both commutative and noncommutative C*-algebras. Next, building off work from some of the authors, we extend the definition of the quantum metric on the density space to the non-unital C*-algebra case by introducing the notion of a quantum Lipschitz triple, which form a subclass of quantum locally compact metric spaces of Latr\'emoli\`ere that   utilize Rieffel's notion of a quantum metric (we also introduce new classes of quantum locally compact metric spaces that include certain noncommutative homogeneous C*-algebras). Furthermore, we prove that this quantum metric topology is weaker than the topology of the one induced by the C*-norm and finish the article with an analysis of matrix-valued functions on the quantized interval, which provides commutative and noncommutative examples where the quantum metric topology on the density space is not compact and is not uniformly equivalent to both the Bures metric  and the metric  induced by the C*-norm. 
\end{abstract}

\tableofcontents

\baselineskip=17pt

\section{Introduction}\label{s:intro}

The Bures metric \cite{Bures} of quantum information theory  is a metric specifically designed for quantum probability theory to detect how much two quantum probabilities/density operators are related \cite{Hayashi}.  In particular, if two density operators $a,b\in M_n(\C)$ are orthogonal in the sense that $\mathrm{Trace}(ab)=0$, then their distance is maximal in the Bures metric, and in fact, this orthogonality condition is equivalent to distance being maximal. Recently, Farenick and Rahaman \cite{Farenick-Rahaman17} advanced the theory of the Bures metric by generalizing it to the infinite-dimensional setting of unital C*-algebras equipped with faithful trace, which is the most general setting known to exist at the moment. We also note in this article (see Remark \ref{r:hellinger})  that this generalization of the Bures metric was more far-reaching than originally intended as it also generalized the Hellinger distance \cite{Hellinger} 
of probability theory, and thus providing a new noncommutative/quantum analogue of the Hellinger distance  that allows for the more abstract setting of C*-algebras as well as the ability to choose an arbitrary faithful trace  (for other quantum analogues to the Hellinger distance see \cite{Bhatia, Pitrik}). In \cite{Aguilar25}, the authors made further observations of the Bures metric in the setting of unital C*-algebras focused on the topological nature of the Bures metric. However, no mention has been made in the literature of the non-unital case of C*-algebras, which comprises of a large class of infinite-dimensional C*-algebras. Therefore, in this article, we show that the natural definition of the Bures metric in the non-unital case is in fact a metric. Moreover, as we now have a Bures metric for any C*-algebra equipped with a faithful trace, we are able and we do prove a Heine-Borel theorem for Bures density spaces in that we prove that the C*-algebra (unital or non-unital) is infinite dimensional if and only if the Bures density space is not compact (see Theorem \ref{t:hb-bures}). We also  provide explicit examples  of sequences that do not have converging subsequences in both commutative and noncommutative settings.

On the noncommutative metric geometry side of the story, quantum metric spaces of Rieffel were  introduced in \cite{Rieffel98a} motivated by noncommutative differential geometry in the sense of Connes \cite{Connes} and the need to introduce a more metric side of the noncommutative story to address statements in the high energy physics literature \cite{Rieffel00}. It's safe to say that  Rieffel's quantum metric spaces have been established as an important chapter in the story of quantum mathematics. Related recent work has also shown that seminorms from noncommutative geometry can be used to induce metric structures on completely positive maps in the C*-algebraic setting \cite{abelo25}, while in \cite{Aguilar25}, the authors used the quantum metric to induce a metric on the density space and made several topological observations. But again, just as with the Bures metric, the non-unital case has not been studied yet. However, Rieffel's compact quantum metric spaces cannot be used since they are, by definition, only for unital spaces. But thankfully, a non-unital notion of a quantum metric space has been introduced by Latr\'emoli\`ere \cite{Latremoliere12b} called a quantum locally compact metric space. Although this is the natural choice for the non-unital case, one issue does arise, which is that  a quantum metric can take value infinity (i.e. in general, it is at best an extended metric) (see Example \ref{e:extended} for an example of this). To remedy this, we introduce a subclass of quantum locally compact metric spaces called quantum Lipschitz triples, which have the benefit of a quantum metric that is truly a metric on the density space. We first prove that this is a natural setting since we show that the topology induced by the quantum metric for quantum Lipschitz triples is weaker than the topology induced by the C*-norm (see Theorem \ref{t:c*-norm-stronger-qm}). Next, unlike the Bures metric setting, the question of compactness seems to be more subtle, likely due to the dependence on the $L$-seminorm given by the quantum Lipschitz triple. But, we still perform a detailed analysis of matrix-valued functions the quantized interval, where we show that the Bures metric and quantum metric are not uniformly equivalent (Theorem \ref{t:non-uniform}) and that the quantum metric topology is not compact (Theorem \ref{t:non-compact}). As this example comes from a non-classical construction, we hope that it will provide future insights into the topological nature of the quantum metric on the density space for both the non-unital and  unital case. Overall, we hope this article not only begins the study of these fascinating metrics (the Bures metric and quantum metric) in the non-unital setting, but also provides foundational result for future work to build.

The article is organized as follows. In Section \ref{s:background}, we provide some background. In Section \ref{s:non-u-bures}, we define the Bures metric in the non-unital setting, while proving some preliminary results and providing some examples. In Section \ref{s:hb}, we prove the Heine-Borel theorem for Bures density spaces for both unital and non-unital C*-algebras. In Section \ref{s:qm-non-u}, we introduce the quantum metric on the density space in the non-unital case and the notion of a quantum Lipschitz triple. In Section \ref{s:quantized}, we focus on a family of examples that include noncommutative examples provided by matrix-valued functions on the quantized interval to analyze topological and metric comparisons with other metrics (the Bures metric and the metric induced by the C*-norm) and prove that the quantum metric topology is not compact in this case.

\section{Background}\label{s:background}

\begin{convention}
    Given a C*-algebra $\A$, we denote its unique C*-norm by $\|\cdot\|_\A$, its space of self-adjoint elements by $sa(\A)$, its cone of positive elements by $\A_+ $, its state space by $S(\A)$,   the spectrum of an element $a\in \A$ by $\sigma(a)$, and if $\A$ is unital, then we denote its unit by $1_\A$.
\end{convention}

As much of the techniques of this article rely on the standard minimal unitization of a C*-algebra, we define it here along with some usual associated objects and notation.

\begin{definition}[{\cite[Proposition I.1.3]{Davidson}}]\label{d:unitization}
    Let $\A$ be a non-unital C*-algebra. We denote the {\em standard minimal unitization of $\A$} by 
    \[
    \wt{\A}=\A\oplus \C,
    \]
    which is a unital C*-algebra with respect to  the unit $\un{\A}=(0,1)$ and coordinate-wise addition, scalar multiplication, and adjoint, and multiplication defined by 
    \[
    (a,\al)(b,\be)=(ab+\be a+\al b, \al\be),
    \]
    and C*-norm given by
    \[
    \|(a,\al)\|_{\wt{\A}}=\sup \{\|ab+\al b\|_\A : b\in \A, \|b\|_\A\leq 1\}.
    \]
    Note that $\A$ is *-isomorphic to the ideal $\{(a,0)\in \wt{\A}: a\in \A\}$, and we will often denote this ideal by $\A$ as well as any $(a,0)\in \wt{\A}$ by $a$.

    With this, we define the {\em spectrum of $a\in \A$} by
    \[
    \sigma(a)=\{\la \in \C : \la\in \sigma((a,0))\},
    \]
    that is $\la\in \sigma(a)$ if and only if $(a,0)-\la\un{\A}=(a,0)-\la(0,1)=(a,\la)$ is not invertible. In particular, we note that $0\in \sigma(a)$ (lest $\A$ be unital), and the continuous functional calculus of $a$ is given by 
    \[
    C^*(a)\cong C_0(\sigma(a)\setminus \{0\})
    \]
    (see \cite[Corollary I.3.2]{Davidson}).

    Finally, if $\rho $ is a positive linear functional on $\A$, we denote its unique extension to a positive linear functional on $\wt{\A}$ by $\wt{\rho}$ defined by
    \[
    \wt{\rho}((a, \al))=\rho(a)+\al\|\rho\|_\mathrm{op},
    \] 
    and so, when $\rho\in S(A)$, we have that $\wt{\rho}((a,\al))=\rho(a)+\al$. 
\end{definition}
 
To prepare for our work with quantum metrics, we begin with the definition of a compact quantum metric space in the sense of Rieffel since even in the non-unital case,  the  quantum metrics we work with in this article will be induced by compact quantum metrics. The definition of a compact quantum metric space takes on many forms and differs from Rieffel's initial definition, but the one we provide below is one of the main formulations. We first define. 
\begin{definition}[{\cite[Definition 2.3 and Remark 2.5 and 2.9]{Latremoliere12b}}]\label{d:l-pair}
    Let $\A$ be  a C*-algebra (unital or non-unital). Let $L:sa(\A) \rightarrow [0,\infty]$ be an extended seminorm (a seminorm allowed to take value $\infty$). We say that $(\A, L)$ is a {\em Lipschitz pair} if  $\dom{L}=\{a\in sa(\A): L(a)<\infty\}$  is a dense subspace of $sa(\A)$ and $\ker L=\{a\in sa(\A): L(a)=0\}=\R1_\A$ if $\A$ is unital, and $\ker L=\{a\in sa(\wt{\A}): L(a)=0\}=\R1_{\wt{\A}}$ if $\A$ is non-unital where we define $L((a,\la))=L(a)$ for every $(a,\la)\in sa(\wt{\A})$.

    We define the associated  {\em  Monge-Kantorovich metric} defined for each $\mu,\nu\in S(\A)$
        \[
        mk_L(\mu,\nu)=\sup \{|\mu(a)-\nu(a)| : a\in sa(\A), L(a)\leq 1\}.
        \] 
        We note that $mk_L$ may take on the value of  $\infty$, but it still satisfies all other properties of a metric, and thus, in general, $mk_L$ is an extended metric.
\end{definition}

\begin{definition}[{\cite{Rieffel98a}}]\label{d:cqms}
    Let $\A$ be a unital C*-algebra. Let $L:sa(\A)\rightarrow [0,\infty]$ be an extended seminorm. We say that the pair $(\A, L)$ is a {\em compact quantum metric space} if:
    \begin{enumerate}
        \item  $(\A, L)$ is a Lipschitz pair, and 
        \item the associated   Monge-Kantorovich metric $mk_L$ 
        metrizes the weak* topology on $S(\A)$.
    \end{enumerate}
    When this occurs, we call $L$ an $L$-seminorm.
\end{definition}

Now, we turn to the non-unital case. The definition of a quantum locally compact metric space in the sense of Latr\'emoli\`ere \cite{Latremoliere12b} is much more involved (for good reason), but we actually only need a part of the definition for the purposes of this article, and so, we only state this part. 

\begin{definition}[{\cite[Definition 2.27]{Latremoliere12b}}]\label{d:l-triple}
    Let $\A$ be a non-unital C*-algebra. Let $L:sa(\wt{\A})\rightarrow [0,\infty]$ be an extended seminorm. Let  $\M$ be a C*-subalgebra of $\A.$ 
    
    We say that $(\A, L,\M)$ is a {\em Lipschitz triple} if:
    \begin{enumerate}
        \item $(\wt{\A}, L)$ is a Lipschitz pair, and
        \item $\M$ is commutative and contains an approximate identity for $\A$.
    \end{enumerate}
    We will denote the restriction of $L$ to $\A$ also by $L$.
\end{definition}

\section{The Bures metric on the density space of a non-unital C*-algebra}\label{s:non-u-bures}

We extend the work of  \cite{Farenick-Rahaman17}, in which they extended the notion of the Bures metric from the finite-dimensional setting to that of any unital C*-algebra, by considering non-unital C*-algebras that include a large class of infinite-dimensional C*-algebras (a non-unital C*-algebra is necessarily infinite-dimensional since any finite-dimensional C*-algebra is unital (see \cite[Theorem III.1.1]{Davidson})). Along with proving that the Bures metric is in fact a metric in this non-unital setting, we also establish a Fuchs-van de Graaf Inequality  and comparisons with other topologies. To be clear about our definition of trace, we provide the following definition.

\begin{definition}
    Let $\A$ be a C*-algebra. We say that $\tau$ is a {\em trace on $\A$} if it is a positive linear functional on $\A$ satisfying $\tau(ab)=\tau(ba)$ for every $a,b\in \A$.
\end{definition}

\begin{definition}\label{d:density-Bures}
    Let $\A$ be a   C*-algebra (unital or non-unital) and let $\tau$ be a faithful trace. Define the {\em density space of $\A$ with respect to $\tau$} by
    \[
    D_\tau(\A)=\{ a\in \A_+: \tau(a)=1\}.
    \]

    Define the {\em Bures metric} on $D_\tau(\A)$ by
    \[
    d_B^\tau(x,y)=\sqrt{1- F_\tau(x,y)},
    \]
    where 
    \[
    F_\tau(x,y)=\tau(|\sqrt{x}\sqrt{y}|)
    \]
    is the {\em Fidelity   associated to $\tau$}.
\end{definition}

\begin{remark}\label{r:hellinger}
    As mentioned in the introduction, this generalization of the Bures distance to the C*-algebra setting (first done for unital C*-algebras in \cite{Farenick-Rahaman17}) actually also generalized the Hellinger distance \cite{Hellinger}. Indeed, if $X$ is a locally compact Hausdorff space equipped with a faithful measure $\mu$, and if we let $\mu$ also denote the faithful trace given by  integration with respect to $\mu$, then 
for any two positive $f,g\in C_0(X)$ such that $\mu(f)=1=\mu(g)$, we have that since $C_0(X)$ is commutative
\[
d_B^\mu(f,g)=\sqrt{1-\int_X \sqrt{fg} \ d \mu}=\sqrt{\frac{1}{2}\int_X \left( \sqrt{f}-\sqrt{g}\right)^2 \ d\mu}.
\]
    We thank Katy Craig at UC Santa Barbara for making this observation during a talk the first author gave at the UC Santa Barbara Applied Math/PDE/Data Science seminar in 2026.
\end{remark}

We first prove some basic facts about some operations regarding the unitization, which are  likely well-known.
\begin{lemma}\label{l:unitize-root-abs}
    Let $\A$ be a non-unital C*-algebra. Let $x\in \A$. It holds that:
    \begin{enumerate}
        \item if $x\in \A_+$, then $(x, 0)\in \wt{\A}_+ $, $(\sqrt{x}, 0)\in \wt{\A}_+$, and $\sqrt{(x,0)}=\left( \sqrt{x}, 0\right)$;
        \item $|(x,0)|=(|x|,0)$.
    \end{enumerate}
\end{lemma}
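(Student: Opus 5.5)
The plan is to use that the embedding $\iota:\A\to\wt{\A}$, $\iota(a)=(a,0)$, is an injective $*$-homomorphism onto an ideal (Definition \ref{d:unitization}). Every claim then follows from the fact that $*$-homomorphisms preserve positivity and commute with the continuous functional calculus, together with uniqueness of positive square roots in the unital C*-algebra $\wt{\A}$.

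For (1), let $x\in\A_+$. Since $\sqrt{x}\in\A$ is self-adjoint with $(\sqrt{x})^2=x$, set $y=(\sqrt{x},0)$. Then $y^*=y$, and the multiplication rule gives
\[
y^2=(\sqrt{x}\sqrt{x}+0+0,0)=(x,0).
\]
So $(x,0)=y^*y\in\wt{\A}_+$.

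Next I show $y\geq 0$. Since $\sqrt{x}=z^*z$ with $z=x^{1/4}\in\A$, the same computation gives $y=(z,0)^*(z,0)\in\wt{\A}_+$. Thus $y$ is a positive element of $\wt{\A}$ whose square is $(x,0)$. By uniqueness of the positive square root in the C*-algebra $\wt{\A}$, we get $\sqrt{(x,0)}=(\sqrt{x},0)$.

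For (2), recall $|x|=\sqrt{x^*x}$ with $x^*x\in\A_+$. Compute $(x,0)^*(x,0)=(x^*,0)(x,0)=(x^*x,0)$, using that the adjoint is coordinate-wise. Applying (1) to $x^*x$ gives
\[
|(x,0)|=\sqrt{(x^*x,0)}=(\sqrt{x^*x},0)=(|x|,0).
\]

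The only point needing any care is the positivity of $(\sqrt{x},0)$ in $\wt{\A}$, because positivity there is defined via the spectrum in $\wt{\A}$. Writing $(\sqrt{x},0)$ as a product $w^*w$ with $w=(x^{1/4},0)$ settles this. Alternatively, one can note that the spectrum of $(\sqrt{x},0)$ in $\wt{\A}$ is by definition $\sigma(\sqrt{x})$, which lies in $[0,\infty)$. Everything else is a direct computation with the multiplication rule.
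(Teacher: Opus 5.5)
Your proof is correct and follows essentially the same route as the paper. The paper also writes $(x,0)$ and $(\sqrt{x},0)$ in the form $w^*w$ in $\wt{\A}$; it writes $x=y^*y$ for some unspecified $y$, where you pick $w=(\sqrt{x},0)$ and $w=(x^{1/4},0)$ explicitly. Both arguments then invoke uniqueness of positive square roots, and both obtain (2) by applying (1) to $x^*x$.
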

\begin{proof}
    (1) Since $x\in \A_+$, we have that $x=y^*y$ for some $y\in \A$. But $(x,0)=(y^*y,0)=(y^*,0)(y,0)=(y,0)^*(y,0)\in \wt{\A}_+$. Hence as $\sqrt{x}\in \A_+$, we have that $(\sqrt{x},0)\in \wt{\A}_+$. Furthermore, 
    \[
    (\sqrt{x},0)^2=(\sqrt{x}^2,0)=(x,0),
    \]
    and thus $\sqrt{(x,0)}=(\sqrt{x},0)$ by uniqueness of square root.

    (2) By part (1), we have that 
    \[
    (|x|, 0)=\left( \sqrt{x^*x},0\right)=\sqrt{(x^*x, 0)}=\sqrt{(x^*,0)(x,0)}=|(x,0)|
    \]
    as desired.
\end{proof}

Let's now see how the standard minimal unitization (Definition \ref{d:unitization}) will begin to help us prove our results.

\begin{lemma}\label{l:bures-unitization}
     Let $\A$ be a non-unital C*-algebra and let $\tau$ be a faithful trace. For every $x,y \in D_\tau(\A)$ it holds that
     \[
     F_{\wt{\tau}}((x,0),(y,0))=F_\tau(x,y)
     \]
     and 
     \[
     d_B^{\wt{\tau}}((x,0), (y,0))=d_B^\tau(x,y),
     \]
     where $\wt{\tau}$ is the unique extension of $\tau $ to the unitization $\wt{\A}$ (Definition \ref{d:unitization}).
\end{lemma}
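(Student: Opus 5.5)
The plan is to compute $F_{\wt{\tau}}((x,0),(y,0))$ directly with Lemma \ref{l:unitize-root-abs}, pushing every operation (square root, product, absolute value) into the first coordinate of $\wt{\A}=\A\oplus\C$. Once everything sits in $\A$, the definition $\wt{\tau}((a,\al))=\tau(a)+\al\|\tau\|_{\mathrm{op}}$ with $\al=0$ turns $\wt{\tau}$ back into $\tau$. Before starting, I will check that $(x,0),(y,0)$ actually lie in $D_{\wt{\tau}}(\wt{\A})$, so that the left-hand sides make sense. By Lemma \ref{l:unitize-root-abs}(1), $(x,0)\in\wt{\A}_+$. Moreover $\wt{\tau}((x,0))=\tau(x)+0=1$, and similarly for $y$.

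The computation runs as follows.
\begin{enumerate}
\item By Lemma \ref{l:unitize-root-abs}(1), $\sqrt{(x,0)}=(\sqrt{x},0)$ and $\sqrt{(y,0)}=(\sqrt{y},0)$.
\item By the multiplication rule in Definition \ref{d:unitization}, $(\sqrt{x},0)(\sqrt{y},0)=(\sqrt{x}\sqrt{y}+0+0,0)=(\sqrt{x}\sqrt{y},0)$.
\item Since $\sqrt{x}\sqrt{y}\in\A$, Lemma \ref{l:unitize-root-abs}(2) gives $|(\sqrt{x}\sqrt{y},0)|=(|\sqrt{x}\sqrt{y}|,0)$.
\item Hence $F_{\wt{\tau}}((x,0),(y,0))=\wt{\tau}((|\sqrt{x}\sqrt{y}|,0))=\tau(|\sqrt{x}\sqrt{y}|)+0\cdot\|\tau\|_{\mathrm{op}}=F_\tau(x,y)$.
\end{enumerate}

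The Bures distance identity then follows at once by substituting into $d_B=\sqrt{1-F}$.

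One point needs care. Definition \ref{d:density-Bures} asks for a faithful trace on $\wt{\A}$, so I should check that $\wt{\tau}$ is one. The statement only claims equality of the formulas, so the identity itself holds regardless. Still, I would remark the following. The trace property $\wt{\tau}(ab)=\wt{\tau}(ba)$ follows by expanding the product rule and using the trace property of $\tau$: the $\al b$ and $\be a$ terms are symmetric. Faithfulness is more delicate in general, but it is not needed for the stated equality.

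The main obstacle is only bookkeeping: making sure the absolute value and square root are computed in $\wt{\A}$ and agree with those in $\A$. Lemma \ref{l:unitize-root-abs} was designed to handle exactly this, so I expect no real difficulty.
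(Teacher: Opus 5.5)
Your proposal is correct and matches the paper's proof essentially line for line. Like the paper, you check that $(x,0),(y,0)\in D_{\wt{\tau}}(\wt{\A})$, use Lemma \ref{l:unitize-root-abs} to move the square roots, the product and the absolute value into the first coordinate, and then evaluate $\wt{\tau}((|\sqrt{x}\sqrt{y}|,0))=\tau(|\sqrt{x}\sqrt{y}|)$. Your remark that faithfulness of $\wt{\tau}$ is not needed for the identity itself is fair: the paper simply cites \cite[Theorem 2.6]{Farenick-Rahaman17} for well-definedness and proves faithfulness separately in Proposition \ref{p:faithful}.
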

\begin{proof}
    Let $x,y\in D_\tau(\A)$. We have  by Lemma \ref{l:unitize-root-abs} that $(x,0),(y,0)\in \wt{\A}$. Also $\wt{\tau}((x,0))=\tau(x)=1=\tau(y)=\wt{\tau}((y,0))$.  Hence, $(x,0),(y,0)\in D_{\wt{\tau}}\left( \wt{\A}\right)$. Therefore, the quantities $ F_{\wt{\tau}}((x,0),(y,0))$ and    $d_B^{\wt{\tau}}((x,0), (y,0)) $ are well-defined by \cite[Theorem 2.6]{Farenick-Rahaman17}. Moreover, by Lemma \ref{l:unitize-root-abs},
    \begin{align*}
F_{\wt{\tau}}((x,0),(y,0))& = \wt{\tau}\left(\left|\sqrt{(x,0)}\sqrt{(y,0)}\right|\right)\\& = \wt{\tau}\left(\left|\left(\sqrt{x},0 \right)\left(\sqrt{y},0 \right) \right| \right)\\
& = \wt{\tau}\left(\left|\left(\sqrt{x} \sqrt{y},0 \right) \right| \right)\\
& = \wt{\tau}\left(\left(\left|\sqrt{x} \sqrt{y}\right| ,0 \right) \right)\\
& = \tau\left(\left|\sqrt{x} \sqrt{y}\right|\right)+0\|\tau\|_\mathrm{op}\\
& =  \tau\left(\left|\sqrt{x} \sqrt{y}\right|\right)\\
& = F_\tau(x,y).
    \end{align*}
    The other equality follows by definition.
\end{proof}
 
We need one more fact before we prove that the Bures metric in the non-unital case is, in fact, a metric. The main issue is the coincidence property (that is, $d_B^\tau(x,y)=0\iff x=y$). To accomplish this, we first need to establish a fact about extending faithful positive linear functionals. We suspect the following result is well-known, but it seems to be difficult to find in the literature. We thank Jack Spielberg for sharing the following proof with us.

\begin{proposition}\label{p:faithful}
    Let $\A$ be a non-unital C*-algebra. If $\rho$ is a faithful positive linear functional  on $\A$, then its unique extension to a positive linear functional on  $\wt{\A}$, $\wt{\rho}$, is faithful.
\end{proposition}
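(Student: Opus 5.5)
Suppose $(a,\al)\in\wt{\A}_+$ with $\wt{\rho}((a,\al))=0$; the plan is to show $(a,\al)=0$. Since $(a,\al)$ is positive, it is self-adjoint, so $a=a^*$ and $\al\in\R$. Moreover, the image of $(a,\al)$ under the character $\wt{\A}\to\C$, $(b,\be)\mapsto \be$, is positive, so $\al\geq 0$. Write $\|\rho\|=\|\rho\|_\mathrm{op}$. Then
\[
\wt{\rho}((a,\al))=\rho(a)+\al\|\rho\|=0 .
\]
If $\al=0$, then $(a,0)\geq 0$ in $\wt{\A}$, and hence $a\in\A_+$ because $\A$ is a hereditary ideal and the spectrum of $a$ is computed in $\wt{\A}$. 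In that case $\rho(a)=0$, and faithfulness of $\rho$ gives $a=0$. So the real case to handle is $\al>0$. After scaling we may assume $\al=1$, so $(a,1)\geq 0$ and $\rho(a)=-\|\rho\|$.

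The key step is to use an approximate identity $(e_\la)$ of $\A$ with $0\leq e_\la$ and $\|e_\la\|\leq 1$. Since $(a,1)\geq 0$, conjugation gives
\[
(e_\la,0)(a,1)(e_\la,0)=(e_\la a e_\la + e_\la^2,0)\geq 0,
\]
so $e_\la a e_\la+e_\la^2\in\A_+$. Writing $(a,1)=(c,\ga)^*(c,\ga)$ is an alternative route, but the conjugation is cleaner. Now $\rho(e_\la a e_\la+e_\la^2)\to \rho(a)+\lim\rho(e_\la^2)$, and $\rho(e_\la^2)\to\|\rho\|$ by the standard fact that $\|\rho\|=\lim\rho(e_\la)$ for positive functionals, which also holds for $e_\la^2$ since $(e_\la^2)$ is again an approximate unit. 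Hence
\[
\rho(e_\la a e_\la+e_\la^2)\to \rho(a)+\|\rho\|=0 .
\]
These are positive elements whose $\rho$-values tend to $0$, but faithfulness only controls exact zeros, not limits. This is the main obstacle.

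To overcome it, I will work in the GNS representation $(\pi_\rho,H_\rho,\xi)$ of $\rho$, extended to $\wt{\A}$ unitally, with cyclic vector $\xi$ satisfying $\|\xi\|^2=\|\rho\|$. In this representation $\wt{\rho}(z)=\langle\pi(z)\xi,\xi\rangle$. From $\wt{\rho}((a,1))=0$ and $(a,1)\geq0$ we get $\pi((a,1))^{1/2}\xi=0$, hence $\pi((a,1))\xi=0$. Then for every $b\in\A$,
\[
\rho(b^*(a,1)b)=\langle \pi((a,1))\pi(b)\xi,\pi(b)\xi\rangle .
\]
The aim is to show this vanishes for a suitable choice of $b$. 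Since $\pi((a,1))$ commutes with nothing in particular, I would instead exploit the self-adjointness of $\pi((a,1))$: we have $\pi(a)\xi=-\xi$. Approximating $\xi$ by $\pi(e_\la)\xi$ then gives $\rho(e_\la a e_\la)\to-\|\rho\|$, which only recovers the earlier limit and is not enough.

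Given this, I expect the argument must use more structure. The route I would try is to choose $b=f(a)$, where $f\geq0$ is continuous, vanishes at $0$, and is supported where the spectral value of $a$ is near $-1$. Since $a\geq -1$ and the positive element $(a,1)$ vanishes on the part of the spectrum near $-1$, this choice makes $(a,1)f(a)$ small, and the term $\rho(f(a)(a,1)f(a))$ becomes controlled. If $\xi$ has nonzero spectral mass of $\pi(a)$ at $-1$, I would derive a contradiction with faithfulness on $\A_+$ via a positive element $g(a)\in\A$, where $g(t)=\max(0,-t-1+\ee)$ or similar, showing $\rho(\cdot)=0$ on a nonzero positive element. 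I expect this spectral-localization step, turning an asymptotic vanishing into an exact one, to be the hardest part of the proof.
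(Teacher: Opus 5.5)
Your proposal stops exactly where the proof has to happen. After reducing to $(a,1)\geq 0$ with $\wt{\rho}((a,1))=0$ and obtaining $\pi((a,1))\xi=0$, you never produce a nonzero element of $\A_+$ on which $\rho$ vanishes exactly, and you leave the ``spectral-localization step'' open.

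The candidate you name does not work. Since $\pi(a)\xi=-\xi$, every continuous $g$ with $g(0)=0$ satisfies $\rho(g(a))=g(-1)\|\rho\|$; in particular $\xi$ always carries all its spectral mass at $-1$. For $g(t)=\max(0,-t-1+\ee)$ this gives $\rho(g(a))=\ee\|\rho\|\neq 0$. If you instead take $g\geq 0$ with $g(0)=g(-1)=0$, faithfulness only yields $\sigma(a)\subseteq\{-1,0\}$, i.e.\ $a=-p$ for a projection $p\in\A$ with $\rho(p)=\|\rho\|$. That is not yet a contradiction: you would still have to show that $p$ is a unit for $\A$. Functional calculus in $a$ alone cannot do this, since $p$ is the unit of $C^*(a)$; you must bring in other elements of $\A$.

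The missing idea is to multiply by $b\in\A$ on the other side. You computed $\rho(b^*(a,1)b)$, which need not vanish. The useful quantity is
\[
\wt{\rho}\left((b(a,1))^*\,b(a,1)\right)=\|\pi(b)\pi((a,1))\xi\|^2=0 .
\]
Because $\A$ is an ideal, $b(a,1)=(ba+b,0)\in\A$, so faithfulness of $\rho$ on $\A$ forces $ba=-b$ for every $b\in\A$. Thus $e=-a$ is a right identity. Since $a=a^*$ (as $(a,1)\geq 0$), we also get $eb=(b^*e)^*=b$, so $e$ is a unit for $\A$, contradicting non-unitality.

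This is precisely the paper's argument in GNS language. The paper works with the left kernel $N=\{x\in\wt{\A}:\wt{\rho}(x^*x)=0\}$, a left ideal with $\A\cap N=\{0\}$ by faithfulness, and shows $\dim N\leq 1$. For $x=(a,1)\in N$ it uses $bx\in\A\cap N$ to get the right identity $-a$, and $a^*x=0$ to get self-adjointness; you get self-adjointness for free from positivity. With this step inserted after your observation $\pi((a,1))\xi=0$, your reduction gives a complete proof, and the approximate-identity and spectral parts become unnecessary.
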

\begin{proof}
    Up to normalization, we may assume that $\rho\in S(\A)$. By \cite[Proposition 2.1.5(vi)]{Dixmier-c-book}, we have that $\wt{\rho}\in S\left(\wt{\A}\right)$.

Set $N=\{x\in \wt{A}: \wt{\rho}(x^*x)=0\}$, which is a left ideal in $\wt{\A}$. If we show that $N=\{0\}$, or equivalently that $\dim(N)=0$, then the proof is complete.

We first show that $\A\cap N=\{0\}$. Assume by way of contradiction that there exists $(a,\al)\in \A\cap N$ such that $(a,\al)\neq \{0\}$. Since $(a,\al)\in \A$, we have that $\al=0$, and so $a\neq 0$. Thus, as $(a,\al)\in N$, we have $0=\wt{\rho}((a,0)^*(a,0))=\wt{\rho}((a^*a,0))=\rho(a^*a)$. But as $a\neq 0$ and $\rho$ is faithful, we have reached a contradiction. Thus $\A\cap N=\{0\}$.

Next, we claim that $\dim(N)\in \{0,1\}$. Assume by way of contradiction that $\dim(N)\geq 2$. Thus there exist non-zero  $x_j=(a_j,\la_j)\in N$ for $j\in \{1,2\}$ that are linearly independent. Since $\A\cap N=\{0\}$, we have that $\la_j\neq 0$ for $j\in \{1,2\}$. By linear independence, we have that $y=\la_1^{-1}x_1-\la_2^{-1}x_2 \neq 0$ and $y\in N$ as $N$ is a subspace. However,
\begin{align*}
    y & = \la_1^{-1}x_1-\la_2^{-1}x_2\\
    &= (\la_1^{-1}a_1, 1)-(\la_2^{-1}a_2, 1)\\
    & = (\la_1^{-1}a_1-\la_2^{-1}a_2, 0)\in \A.
\end{align*}
Thus $y\in \A\cap N$, but since $\A\cap N=\{0\}$, we have reached a contradiction, and so $\dim(N)\in \{0,1\}$.

Next, assume by way of contradiction that $\dim(N)=1$. Thus, $N$ has some non-zero element $z=(c,\la)$. Since $\A\cap N=\{0\}$, we have that $\la \neq 0$, and we set $a=\la^{-1}c\in \A$ and set $x=\la^{-1}(c,\la)=(a,1)\in N$.  Set $e=-a$, and note that $x=(-e,1)$  Let $b\in \A$. Then
\[
bx=(b,0)(-e,1)=(-be+b, 0)\in \A
\]
and since $N$ is a left   ideal, we have that $bx\in N$, and so $bx=0$ since $\A\cap N=\{0\}$, which implies that $-be+b=0$. Thus $b=be$.  Hence $e$ is a right identity in $\A$. We now show that $e$ is also a left identity. Consider $a^*x$, which is an element of $N$ since $N$ is a left ideal. However, $a^*x\in \A$ since $\A$ is an ideal. Therefore, $a^*x=0$ since $\A\cap N=\{0\}$. However, 
\[
0=a^*x=(a^*,0)(a,1)=(a^*a+a^*,0),
\]
and so $a^*a+a^*=0$, which implies that $-a^*a=a^*$. Hence,  $a^*$ is self-adjoint, and consequently, so are $a$ and $e$. Let $b\in \A$. Then since $e$ is a right identity, and we have that $b^*e=b^*$. Furthermore,  $e$ is self-adjoint, and we have 
\[
b=(b^*)^*=(b^*e)^*=e^*(b^*)^*=eb.
\] Therefore, $e\in \A$ is both a left and right identity, which contradicts that $\A$ is non-unital.  Thus, $\dim(N)=0$, which completes the proof.
\end{proof}

We now prove that the Bures metric is in fact a metric on the density space associated to any non-unital C*-algebra equipped with a faithful trace.

\begin{theorem}\label{t:bures-metric-non-unital}
    Let $\A$ be a non-unital C*-algebra equipped with a faithful trace $\tau$. It holds that the Bures metric $d_B^\tau$ is a metric on the density space $D_\tau(\A)$.
\end{theorem}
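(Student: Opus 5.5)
The plan is to transfer everything to the unitization and invoke the unital result of Farenick and Rahaman. Consider the map $\iota: D_\tau(\A)\to \wt{\A}$, $\iota(x)=(x,0)$. By Lemma \ref{l:unitize-root-abs}, $(x,0)\in \wt{\A}_+$, and $\wt{\tau}((x,0))=\tau(x)=1$, so $\iota$ maps $D_\tau(\A)$ into $D_{\wt{\tau}}(\wt{\A})$. The map $\iota$ is injective, since $(x,0)=(y,0)$ forces $x=y$.

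Next I will check that the hypotheses of the unital theory hold on $\wt{\A}$. Proposition \ref{p:faithful} gives that $\wt{\tau}$ is faithful, and it is a positive linear functional. It remains to show that $\wt{\tau}$ is a trace. For $(a,\al),(b,\be)\in\wt{\A}$ we have
\[
\wt{\tau}((a,\al)(b,\be))=\tau(ab)+\be\tau(a)+\al\tau(b)+\al\be\|\tau\|_{\mathrm{op}},
\]
which is symmetric in the two factors because $\tau(ab)=\tau(ba)$. So $\wt{\tau}$ is a faithful trace on the unital C*-algebra $\wt{\A}$. By \cite[Theorem 2.6]{Farenick-Rahaman17}, $d_B^{\wt{\tau}}$ is therefore a metric on $D_{\wt{\tau}}(\wt{\A})$. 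This includes the facts that $0\le F_{\wt{\tau}}\le 1$, so the square root is defined, together with symmetry, the triangle inequality, and coincidence.

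Finally, Lemma \ref{l:bures-unitization} gives $d_B^\tau(x,y)=d_B^{\wt{\tau}}(\iota(x),\iota(y))$. This means $d_B^\tau$ is the pullback of a metric along an injective map, and hence is itself a metric. Explicitly, well-definedness (i.e.\ $F_\tau(x,y)\le 1$), nonnegativity, symmetry and the triangle inequality transfer directly through $\iota$. For coincidence, $d_B^\tau(x,y)=0$ gives $d_B^{\wt{\tau}}((x,0),(y,0))=0$, hence $(x,0)=(y,0)$, hence $x=y$. Conversely, $d_B^\tau(x,x)=0$ because $F_\tau(x,x)=\tau(x)=1$.

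The main obstacle is coincidence, and it is handled by the faithfulness of $\wt{\tau}$ from Proposition \ref{p:faithful}, which is exactly what the Farenick--Rahaman theorem needs. The only other point requiring care is checking that $\wt{\tau}$ is a trace, which is the short computation above.
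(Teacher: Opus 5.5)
Your proposal is correct and follows essentially the same route as the paper. You pass to the unitization via Lemma \ref{l:bures-unitization}, get symmetry and the triangle inequality from Farenick--Rahaman, and obtain coincidence from the faithfulness of $\wt{\tau}$ (Proposition \ref{p:faithful}); your explicit check that $\wt{\tau}$ is tracial is correct and fills in a detail the paper leaves implicit.
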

\begin{proof}
 Symmetry and the triangle inequality follow from Lemma \ref{l:bures-unitization} and \cite[Theorem 2.6]{Farenick-Rahaman17}. All that remains is the coincidence property. Let $x,y \in D_\tau(\A)$. If $x=y$, then $d_B^\tau(x,y)=0$ follows since $\tau(|\sqrt{x}\sqrt{x}|)=\tau(|x|)=\tau(x)=1$.  Now, assume that $d_B^\tau(x,y)=0$. Thus 
 \[
 d_B^{\wt{\tau}}((x,0), (y,0))=d_B^\tau(x,y)=0
 \]
 by Lemma \ref{l:bures-unitization}. 
 Then as $\wt{\tau}$ is faithful by Proposition \ref{p:faithful}, and so $d_B^{\wt{\tau}}$ is a metric on $\wt{\A}$ by \cite[Theorem 2.6]{Farenick-Rahaman17}, we have that 
 \[
 (x,0)= (y,0),
 \]
and so $x=y.$
\end{proof}

The first topology we compare the Bures metric topology to is the $L^1$-topology given by the faithful trace. Just as in \cite{Farenick-Rahaman17}, we first check that we get an $L^1$-topology to begin with.

\begin{theorem-definition}\label{p:l1-trace-topology}
    Let $\A$ be a non-unital C*-algebra equipped with a faithful trace $\tau$. It holds that the map
    \[
    a\in \A \longmapsto \tau(|a|)
    \]
    is a norm on $\A$ denoted $\|\cdot \|_{\tau,1}$ and called the {\em $L^1$-norm}.  We denote its induced metric by $d_1^\tau$, called the {\em $L^1$-metric}, and we call the associated topology, {\em the $L^1$-topology}.
\end{theorem-definition}
\begin{proof}
    By Proposition \ref{p:faithful}, we have that $\wt{\tau}$ is a faithful trace on the unitization $\wt{\A}$.  Thus $\|\cdot\|_{\wt{\tau},1}$ is a norm on $\wt{\A}$ by \cite[Proposition 2.1]{Farenick-Rahaman17}. Furthermore, if $a\in \A$, then by Lemma \ref{l:unitize-root-abs}
    \begin{align*}
        \|(a,0)\|_{\wt{\tau},1}& = \wt{\tau}(|(a,0)|)\\
        &= \wt{\tau}((|a|,0))\\
        & =\tau(|a|)\\
        &= \|a\|_{\tau,1}.
    \end{align*}
    Hence, $\|\cdot\|_{\tau,1}$ is a norm on $\A$.
\end{proof}

We now prove that the topology induced by the Bures metric and the $L^1$-metric are equal following \cite[Proposition 2.7]{Farenick-Rahaman17} by providing a Fuchs-van de Graaf inequality in the non-unital case.

\begin{theorem}\label{t:l1-homeo}
    Let $\A$ be a non-unital C*-algebra equipped with a faithful trace $\tau$. For every $x,y\in D_\tau(\A)$, it holds that 
    \[
    1-\frac{1}{2}d_1^\tau(x,y)\leq F_\tau(x,y) \leq \sqrt{1-\frac{1}{4}d_1^\tau( x,y)^2}, 
    \]
    and equivalently,
    \[
    2d_B^\tau(x,y)^2\leq d_1^\tau(x,y)\leq 2\sqrt{1-F_\tau(x,y)^2}.
    \]
    Moreover, the topologies induced by $d_B^\tau$ and $d_1^\tau$ are equal.
\end{theorem}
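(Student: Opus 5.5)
The plan is to transfer everything to the unitization, exactly as in the proofs of Theorem \ref{t:bures-metric-non-unital} and Theorem-Definition \ref{p:l1-trace-topology}, and then invoke the unital Fuchs--van de Graaf inequality of \cite[Proposition 2.7]{Farenick-Rahaman17}. Fix $x,y\in D_\tau(\A)$. By Proposition \ref{p:faithful}, $\wt{\tau}$ is a faithful trace on $\wt{\A}$; it is tracial because $\wt{\tau}((a,\al)(b,\be))=\tau(ab)+\be\tau(a)+\al\tau(b)+\al\be\|\tau\|_\mathrm{op}$ is symmetric under swapping the factors. By Lemma \ref{l:bures-unitization}, $(x,0),(y,0)\in D_{\wt{\tau}}(\wt{\A})$ and $F_{\wt{\tau}}((x,0),(y,0))=F_\tau(x,y)$.

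Next, since $(x,0)-(y,0)=(x-y,0)$, Lemma \ref{l:unitize-root-abs}(2) gives $|(x-y,0)|=(|x-y|,0)$, as in the computation in Theorem-Definition \ref{p:l1-trace-topology}. Hence
\[
d_1^{\wt{\tau}}((x,0),(y,0))=\wt{\tau}((|x-y|,0))=\tau(|x-y|)=d_1^\tau(x,y).
\]
Applying \cite[Proposition 2.7]{Farenick-Rahaman17} in the unital algebra $\wt{\A}$ to the densities $(x,0),(y,0)$, and substituting these two equalities, yields
\[
1-\tfrac12 d_1^\tau(x,y)\leq F_\tau(x,y)\leq \sqrt{1-\tfrac14 d_1^\tau(x,y)^2}.
\]

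The equivalent form is obtained by rearranging with $F_\tau=1-(d_B^\tau)^2$. The left inequality gives $d_1^\tau\geq 2(1-F_\tau)=2(d_B^\tau)^2$. The right inequality gives $F_\tau^2\leq 1-\tfrac14 (d_1^\tau)^2$, so $d_1^\tau\leq 2\sqrt{1-F_\tau^2}$. Here we use $0\leq F_\tau\leq 1$, which is guaranteed by the unital theory. The same manipulations reversed show the two forms are equivalent.

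For the topologies, the two-sided bounds show that $d_B^\tau(x_n,x)\to 0$ if and only if $d_1^\tau(x_n,x)\to 0$. Indeed, $1-F^2=(1-F)(1+F)\leq 2(1-F)=2(d_B^\tau)^2$, so
\[
2(d_B^\tau)^2\leq d_1^\tau\leq 2\sqrt2\, d_B^\tau.
\]
Both metrics are therefore uniformly equivalent, and in particular their balls generate the same topology.

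The only mildly delicate step is confirming that the hypotheses of \cite[Proposition 2.7]{Farenick-Rahaman17} hold for $\wt{\tau}$ (faithful and tracial on $\wt{\A}$). Faithfulness is exactly where Proposition \ref{p:faithful} is needed.
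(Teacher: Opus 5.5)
Your proposal is correct and follows the paper's proof. Like the paper, you pass to $(x,0),(y,0)$ in the unitization, use Proposition \ref{p:faithful} and Lemmas \ref{l:unitize-root-abs} and \ref{l:bures-unitization} to identify $F_{\widetilde{\tau}}$ and $d_1^{\widetilde{\tau}}$ with $F_\tau$ and $d_1^\tau$, and then invoke \cite[Proposition 2.7]{Farenick-Rahaman17}. Two of your additions are small but genuine improvements: you check explicitly that $\widetilde{\tau}$ is tracial, and your bound $d_1^\tau\leq 2\sqrt{2}\,d_B^\tau$ gives uniform (not just topological) equivalence while avoiding the slip in the paper's sequential argument, which cites $2\sqrt{1-F_\tau}$ where it should be $2\sqrt{1-F_\tau^2}$.
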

\begin{proof}
    We have
    \begin{align*}
         1-\frac{1}{2}d_1^\tau(x,y)& =  1-\frac{1}{2}\|x-y\|_{\tau,1}\\
         & = 1-\frac{1}{2}\|(x-y, 0)\|_{\wt{\tau},1} \quad \text{ by proof of Theorem-Definition \ref{p:l1-trace-topology}}\\
         & = 1-\frac{1}{2}\|(x, 0)-(y, 0)\|_{\wt{\tau},1}\\
         & \leq F_{\wt{\tau}}((x,0),(y,0)) \quad \text{ by \cite[Proposition 2.7]{Farenick-Rahaman17}}\\
         & =F_\tau(x,y) \quad \text{ by Lemma \ref{l:bures-unitization}}\\
         & \leq \sqrt{1-\frac{1}{4}d_1^{\wt{\tau}}( (x,0),(y,0))^2} \quad \text{ by \cite[Proposition 2.7]{Farenick-Rahaman17}}\\
         & = \sqrt{1-\frac{1}{4}d_1^\tau( x,y)^2} \quad \text{ by proof of Theorem-Definition \ref{p:l1-trace-topology}}.
    \end{align*}

    The equivalent inequalities follow  from arithmetic computations.

    Next, let $(x_n)_{n\in \N}$ be a sequence in $D_\tau(\A)$ and let $x\in D_\tau(\A)$.  First, assume that $(x_n)_{n\in \N}$ converges to $x$ with respect to $d_1^\tau$. Thus,  $((d_1^\tau(x_n,x)))_{n\in \N}$ converges to $0$, and so $((d_B^\tau(x_n,x)))_{n\in \N}$ converges to $0$ since  $2d_B^\tau(x_n,x)^2\leq d_1^\tau(x_n,x) $ for every $n\in \N$. Hence,  $(x_n)_{n\in \N}$ converges to $x$ with respect to $d_B^\tau$.

    Next, suppose  $(x_n)_{n\in \N}$ converge to $x$ with respect to $d_B^\tau$. Thus,  $((d_B^\tau(x_n,x)))_{n\in \N}$ converges to $0$. Now, for every $n\in \N$, we have that 
    \[
    F_\tau(x_n,x)=1-d_B^\tau(x_n,x)^2.
    \]
    Hence, $((F_\tau(x_n,x)))_{n\in \N}$ converges to $1$, and thus 
    \[
    \left( \left( 2\sqrt{1-F_\tau(x_n,x)}\right)\right)_{n\in \N}
    \]
    converges to $0$. As a result,  $((d_1^\tau(x_n,x)))_{n\in \N}$ converges to $0$ since  \[d_1^\tau(x_n,x)\leq 2\sqrt{1-F_\tau(x_n,x)}\] for every $n\in \N$. Thus $(x_n)_{n\in \N}$ converges to $x$ with respect to $d_1^\tau$, which implies that the topologies coincide.
\end{proof}

We now compare the Bures metric topology with the topology induced by the C*-norm on the density space.

\begin{definition}\label{d:c*-norm}
    Let $\A$ be a C*-algebra. We denote the metric induced by the C*-norm by $d_\A$, called the {\em C*-norm metric}. That is, for every $x,y\in \A$
    \[
    d_\A(x,y)=\|x-y\|_\A.
    \]
\end{definition}

\begin{theorem}\label{t:c*-norm-bures}
    Let $\A$ be a non-unital C*-algebra equipped with a faithful trace $\tau$. On the density space $D_\tau(\A)$, it holds that the topology induced by C*-norm metric $d_\A$ is finer than the  topology induced by the Bures metric $d_B^\tau$.
\end{theorem}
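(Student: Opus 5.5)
The plan is to compare $d_B^\tau$ directly with $d_\A$ on $D_\tau(\A)$, using Theorem \ref{t:l1-homeo} as the bridge. By the Fuchs-van de Graaf inequality there, for $x,y\in D_\tau(\A)$ we have $2d_B^\tau(x,y)^2\leq d_1^\tau(x,y)=\tau(|x-y|)$. So it will suffice to bound the $L^1$-norm $\|\cdot\|_{\tau,1}$ by a constant multiple of the C*-norm.

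For that bound, I will use that $\tau$ is a positive linear functional on a C*-algebra, hence automatically bounded with $\|\tau\|_{\mathrm{op}}<\infty$. This is standard: positive functionals on C*-algebras are bounded, with no unit needed; it is implicit in Definition \ref{d:unitization}. For any $a\in\A$, the element $|a|$ lies in $\A_+$, and $\||a|\|_\A=\|a\|_\A$ by the C*-identity, since $\||a|\|^2=\||a|^2\|=\|a^*a\|=\|a\|^2$. Hence
\[
\|a\|_{\tau,1}=\tau(|a|)=|\tau(|a|)|\leq \|\tau\|_{\mathrm{op}}\,\||a|\|_\A=\|\tau\|_{\mathrm{op}}\,\|a\|_\A .
\]
Combining this with the first step gives, for all $x,y\in D_\tau(\A)$,
\[
d_B^\tau(x,y)\leq \sqrt{\tfrac{1}{2}\|\tau\|_{\mathrm{op}}\, d_\A(x,y)} .
\]

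It follows that the identity map $(D_\tau(\A),d_\A)\to (D_\tau(\A),d_B^\tau)$ is (uniformly) continuous. Concretely, if $d_\A(x_n,x)\to 0$ then $d_B^\tau(x_n,x)\to 0$. Since both topologies are metrizable, this shows every $d_B^\tau$-open set is $d_\A$-open, which is exactly the claim.

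The only point requiring any care is the boundedness of $\tau$ in the non-unital setting. If one prefers not to quote automatic continuity directly, the fallback is to pass to $\wt{\A}$ via Definition \ref{d:unitization} and Proposition \ref{p:faithful}, where $\|\wt{\tau}\|_{\mathrm{op}}=\wt{\tau}(\un{\A})$ is finite. One then uses $\|(a,0)\|_{\wt{\A}}=\|a\|_\A$ together with the identity $\|(a,0)\|_{\wt{\tau},1}=\|a\|_{\tau,1}$ from the proof of Theorem-Definition \ref{p:l1-trace-topology}.
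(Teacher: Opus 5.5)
Your proof is correct and takes essentially the same route as the paper: you reduce to the $L^1$-metric via Theorem \ref{t:l1-homeo}, then bound $\tau(|x|)\leq \|\tau\|_{\mathrm{op}}\|x\|_\A$ using $\||x|\|_\A=\|x\|_\A$ from the C*-identity. The only difference is that you apply the inequality $2d_B^\tau(x,y)^2\leq d_1^\tau(x,y)$ directly, which yields the explicit estimate $d_B^\tau(x,y)\leq \sqrt{\tfrac{1}{2}\|\tau\|_{\mathrm{op}}\,d_\A(x,y)}$, whereas the paper simply invokes the equality of the $L^1$ and Bures topologies.
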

\begin{proof}
Since the $L^1$-topology and the Bures metric topology coincide on $D_\tau(\A)$, we just need to show that the $d_\A$ topology is finer than the $L^1$-topology. Let  $x\in \A$, then
\begin{align*}
\tau(|x|)^2 & \leq \|\tau\|_\mathrm{op}^2 \||x|\|_\A^2=\|\tau\|_{op}^2 \||x|^*|x|\|_\A=\|\tau\|_\mathrm{op}^2 \||x||x|\|_\A\\
&=\|\tau\|_\mathrm{op}^2 \|x^*x\|_\A=\|\tau\|_\mathrm{op}^2 \|x\|^2_\A,
\end{align*}
and so
\[
\tau(|x|)\leq \|\tau\|_\mathrm{op} \|x\|_\A.
\]
Hence convergence in $d_\A$ implies convergence in the $L^1$-topology, which completes the proof by the above comments.
\end{proof}

In general, the C*-norm metric topology need not equal the Bures metric topology since the latter topology is the $L^1$-topology. However, in order to provide a finer analysis of the behavior of the Bures metric, we provide an explicit sequence in a certain non-unital C*-algebra that converges with respect to the Bures metric, but not with respect to the C*-norm metric. Afterwards, we generalize the following example to some noncommutative cases.

\begin{example}
Consider the non-unital commutative C*-algebra $C_0((0,1])$. For each $x\in (0,1]$ and $n\in \N$, define
\[ f(x) = 
\begin{cases} 
      \frac{18}{5} x & x \in (0,\frac{1}{3}] \\
      \frac{6}{5} & x \in (\frac{1}{3}, 1]
   \end{cases}
\]
and 
\[ f_n(x) =
\begin{cases} 
      \frac{18 \cdot 2^n}{5 \cdot 2^n - 3} x & x \in (0,\frac{1}{3}] \\
      \frac{6 \cdot 2^n}{5 \cdot 2^n - 3} & x \in (\frac{1}{3}, 1 - 2^{-n}) \\
      - \frac{6 \cdot 4^n}{5 \cdot 2^n - 3} (x-1)  & x \in [1 - 2^{-n}, 1] 
   \end{cases}
\]
(see Figure \ref{f:norm-bures}).
\begin{figure}[h]
    \centering
\includegraphics[width=8cm]{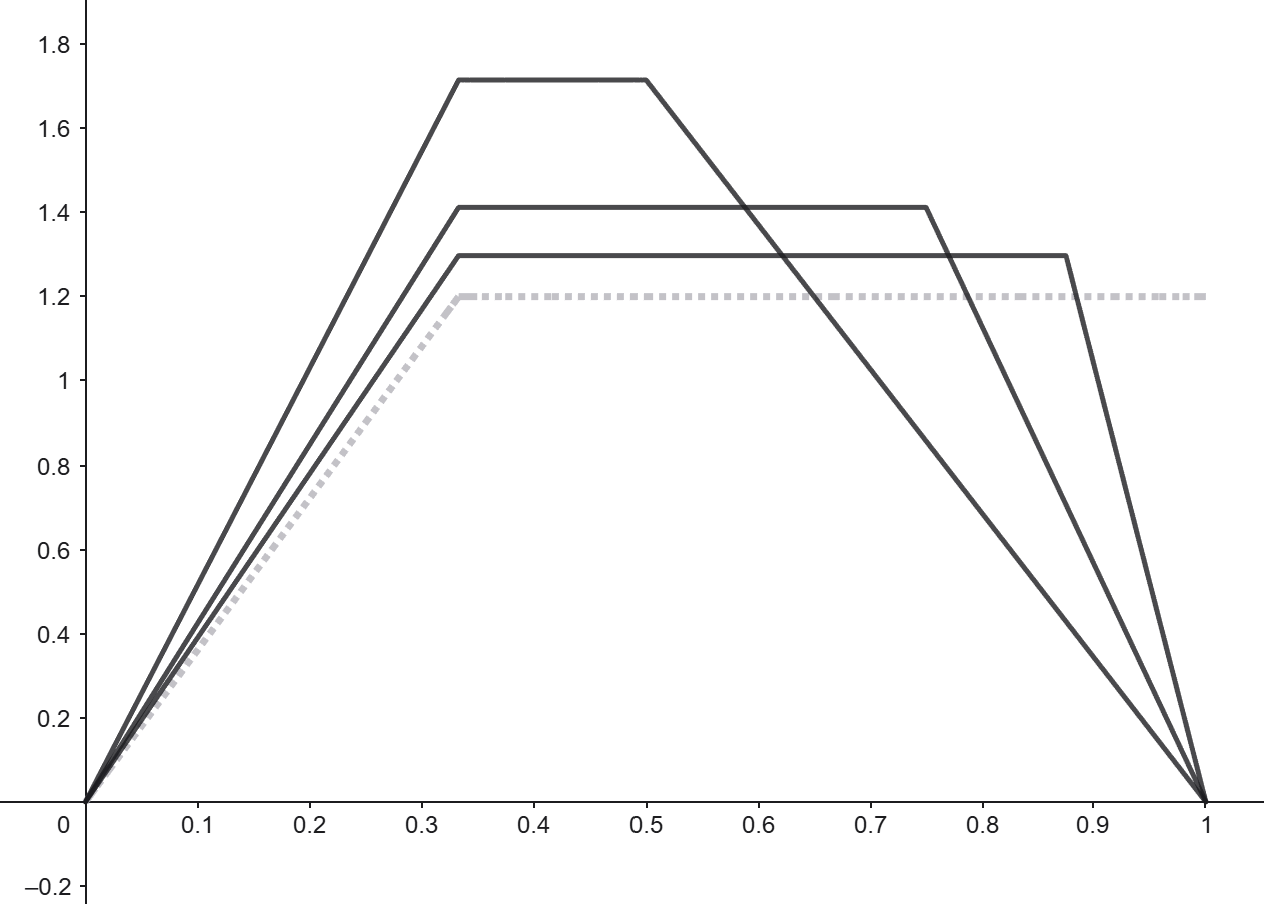}
    \caption{$f$(dotted), $f_1$, $f_2$, and $f_3$. Plotted in GeoGebra}
    \label{f:norm-bures}
\end{figure}
Consider the faithful trace $\lambda$ on $C_0((0,1])$ given by Lebesgue integration over $(0,1]$. It follows  that  $f\in D_\lambda(C_0((0,1])))$ and $f_n\in D_\lambda(C_0((0,1]))) $ for every $n\in \N$. Next, we calculate the Bures metric to get a better idea of the behavior.

Let $n\in \N$ and  $x\in (0,1]$, we then have
\[
\left|\sqrt{f_n}\sqrt{f}\right|(x)=\begin{cases}
 \frac{18\cdot 2^{n/2}}{\sqrt{25\cdot 2^n-15}}x    &x\in (0,\frac{1}{3}]\\
 \frac{6\cdot 2^{n/2}}{\sqrt{25\cdot 2^n-15}} & x\in (\frac{1}{3}, 1-2^{-n})\\
 \frac{6\cdot 4^{n/2}}{\sqrt{25\cdot 2^n-15}} \sqrt{1-x} & x\in [1-2^{-n}, 1]
\end{cases}
\]
\begin{figure}[h]
\includegraphics[width=8cm]{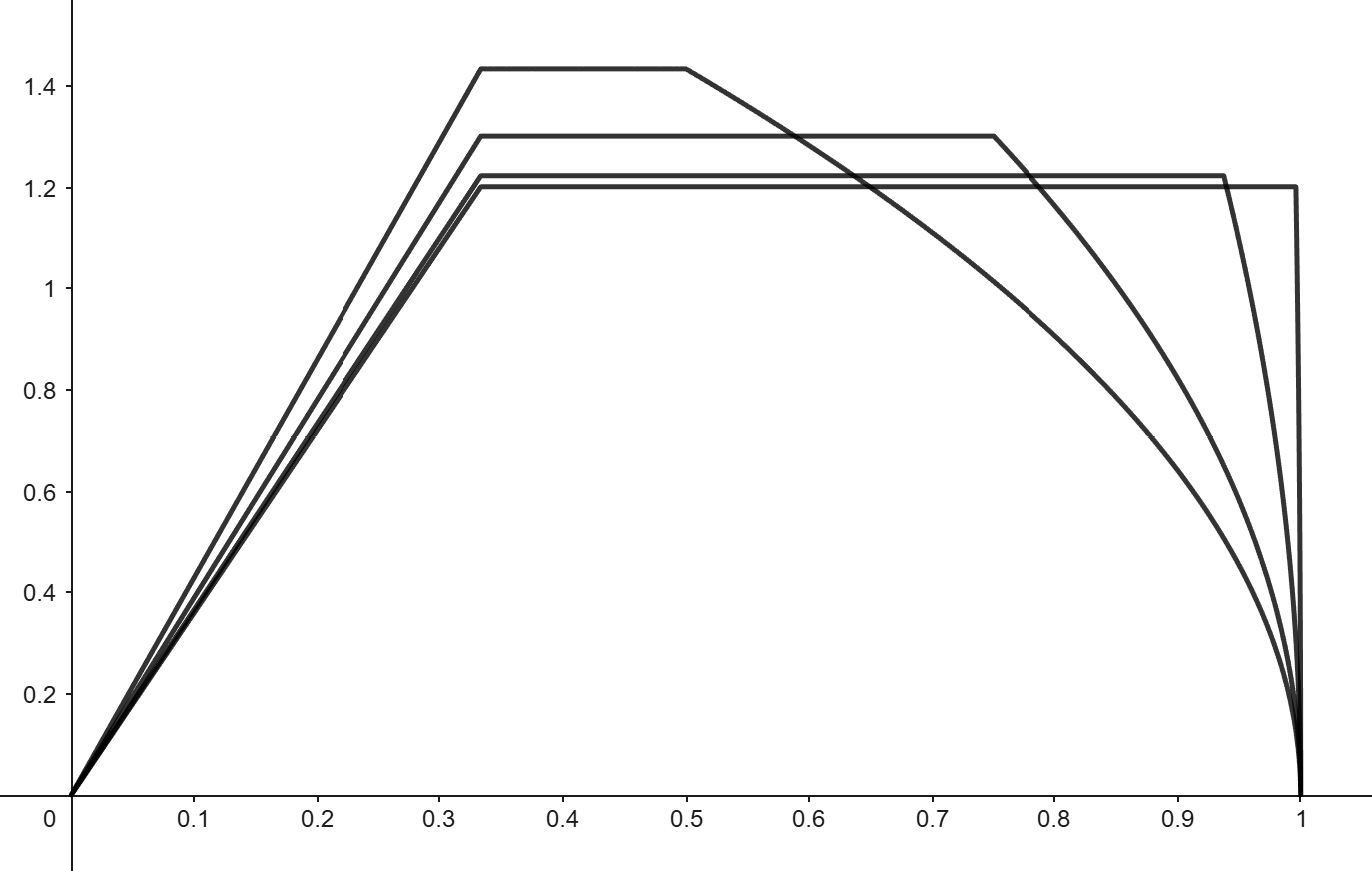}
\caption{$\left|\sqrt{f_1}\sqrt{f}\right|, \left|\sqrt{f_2}\sqrt{f}\right|, \left|\sqrt{f_4}\sqrt{f}\right|, \left|\sqrt{f_8}\sqrt{f}\right|$. Plotted in GeoGebra.}
    \label{f:fid}
\end{figure}
In Figure \ref{f:fid}, we see this sequence of functions tend toward this more trapezoid-like shape with area $1$, which is desirable because we want the integral of this sequence of functions to tend to $1$ since this value is being subtracted from $1$ in the definition of the Bures metric. Formally, one can calculate for each $n\in \N$ the fidelity
\[
F_\lambda(f_n,f)= \frac{  2^{n/2}}{\sqrt{25\cdot 2^n-15}}+\frac{2^{1-(n/2)}\left(2^{n+1}-3\right)}{\sqrt{25\cdot 2^n-15}}+\frac{2^{n+2}\sqrt{8^{-n}}}{\sqrt{25\cdot 2^n-15}},
\]
and
\[
\lim_{n\to \infty} F_\lambda(f_n,f)=\frac{1}{5}+\frac{4}{5}+0=1,
\]
and so
\[
\lim_{n\to \infty} d_B^\lambda(f_n,f)=\sqrt{1-F_\lambda(f_n,f)}=0.
\]
Thus $(f_n)_{n\in \N}$ converges to $f$ with respect to $d_B^\lambda$. Moreover, $(f_n)_{n\in \N}$ does not converge to $f$ with respect to the C*-norm since it does not converge uniformly as it converges pointwise to a discontinuous function.
\end{example}

To extend this example to some noncommutative cases,  we state a well-known construction.

\begin{proposition}\label{p:faithful-extend}
    Let $X$ be a locally compact Hausdorff space equipped with a  faithful measure $\mu$ such that $\mu(X)<\infty$. Let $\A$ be a C*-algebra equipped with a faithful trace $\rho$. 

    If we define
    \[
    \tau_\rho^\mu(a)=\int_X \tau(a(x))\ d\mu(x)
    \]
    for every $a\in C_0(X,\A)$, 
    then $\tau_\rho^\mu$ is a faithful trace on $C_0(X,\A)$.
\end{proposition}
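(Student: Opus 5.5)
We will show in turn that $\tau_\rho^\mu$ is well defined, a positive linear functional, tracial, and faithful; the integrand is read as $\rho(a(x))$ (the $\tau$ inside the integral should be the trace $\rho$ on $\A$).

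\emph{Well-definedness, linearity, positivity.} For $a\in C_0(X,\A)$, the map $x\mapsto \rho(a(x))$ is continuous on $X$, being the composition of the continuous map $a$ with the bounded functional $\rho$. It is bounded by $\|\rho\|_\mathrm{op}\|a\|_{C_0(X,\A)}$. Since $\mu(X)<\infty$, it is therefore $\mu$-integrable, and
\[
|\tau_\rho^\mu(a)|\leq \mu(X)\|\rho\|_\mathrm{op}\|a\|.
\]
Linearity follows from linearity of $\rho$ and of the integral. If $a\geq 0$ in $C_0(X,\A)$, then $a=b^*b$ with $b\in C_0(X,\A)$. Evaluation at $x$ is a *-homomorphism, so $a(x)=b(x)^*b(x)\in \A_+$ for every $x$. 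Hence $\rho(a(x))\geq 0$ pointwise and the integral is nonnegative.

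\emph{Trace property.} For $a,b\in C_0(X,\A)$ the product is computed pointwise, so $(ab)(x)=a(x)b(x)$. Thus $\rho((ab)(x))=\rho((ba)(x))$ for every $x$, and integrating gives $\tau_\rho^\mu(ab)=\tau_\rho^\mu(ba)$.

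\emph{Faithfulness (the main step).} Here we use that "faithful measure" means every nonempty open set has positive measure. Suppose $a\in C_0(X,\A)_+$ with $\tau_\rho^\mu(a)=0$. The function $g(x)=\rho(a(x))$ is continuous, nonnegative and has integral zero. We claim $g\equiv 0$. If $g(x_0)>0$ for some $x_0$, then $U=\{x: g(x)>g(x_0)/2\}$ is a nonempty open set. Then $\int_X g\,d\mu\geq \frac{g(x_0)}{2}\mu(U)>0$, a contradiction. So $\rho(a(x))=0$ with $a(x)\in\A_+$ for every $x$. Faithfulness of $\rho$ gives $a(x)=0$ for all $x$, hence $a=0$.

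The only delicate point is this passage from zero integral to pointwise vanishing. It needs both the continuity of $g$ and the full-support meaning of a faithful measure. If "faithful" were instead meant as a condition on $C_0(X)$, namely that $\int f\,d\mu=0$ with $f\geq0$ forces $f=0$, we would apply it directly to $g\in C_0(X)_+$. That membership holds because $|g(x)|\leq\|\rho\|_\mathrm{op}\|a(x)\|$, which vanishes at infinity.
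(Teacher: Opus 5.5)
Your proof is correct and follows essentially the same route as the paper: positivity, linearity and the trace property are checked pointwise, and faithfulness comes from continuity of $x\mapsto\rho(a(x))$, which yields a nonempty open set of positive measure on which the integrand is bounded below, combined with faithfulness of $\rho$. You argue by contrapositive (a zero integral forces $\rho(a(x))=0$ everywhere), whereas the paper starts from some $a(y)\neq 0$. You also spell out the integrability check and the reading of $\tau$ as $\rho$ in the integrand, which the paper leaves implicit; these differences are cosmetic.
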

\begin{proof}
    First $\tau^\mu_\rho$ is positive and linear since $\tau$ and integration are both positive and linear. Since $\tau$ is a trace, we have that $\tau^\rho_\mu$ is a trace. Finally, let $a\in C_0(X, \A)$ be positive such that $a\neq 0$. Hence there exists $y\in X$ such that $a(y)\neq 0$. Thus $\tau(a(y))>0$. Now $\tau\circ a$ is continuous on $X$, and so there exists an open neighborhood $U$ of $y$ such that there exists $\tau \circ a(x)$ is greater than $\tau(a(y))/2>0$ for every $x\in U$. Therefore, as $\mu$ is a faithful measure and $\tau\circ a$ is a non-negative functions, we have that $\mu(U)>0 $ and $\tau_\rho^\mu(a)\geq \mu(U) \cdot \tau(a(y))/2>0$. Hence $\tau_\rho^\mu$ is faithful.
\end{proof}

\begin{example}\label{e:noncomm-nonunital}
    Let $X$ be a locally compact Hausdorff space equipped with a faithful measure $\mu$, and as an abuse of notation, denote its associated faithful trace by $\mu$ as well. Let $\A$ be a unital C*-algebra equipped with a faithful trace $\rho$. Consider the non-unital C*-algebra $C_0(X,\A)$ and  the faithful trace
    \[
    \tau_\rho^\mu
    \]
    of Proposition \ref{p:faithful-extend}.

    Assume there exists a sequence $(g_n)_{n\in \N}$ in $D_\mu(C_0(X))$ and a $g\in D_\mu(C_0(X))$ such that $(g_n)_{n\in \N}$   converges to $g$ with respect to $d_B^\mu$ but does not converge uniformly to $g$.

     Next, for each $x\in X$ and $n\in \N$, define
\[ g^\A(x) = g(x)\cdot \frac{1}{\rho(1_\A)}1_\A 
\]
and 
\[ g_n^\A(x) =g_n(x)\cdot \frac{1}{\rho(1_\A)}1_\A .
\]  By linearity of $\rho$
\begin{align*}
\tau_\rho^\mu(g^\A)&=\int_{X} \rho(g^\A(x)) \ d\mu(x)\\
& = \int_{X} \rho\left(  g(x)\cdot \frac{1}{\rho(1_\A)}1_\A \right) \ d\mu(x)\\
& = \int_{X}  g(x)\cdot \frac{1}{\rho(1_\A)}\rho(1_\A)  \ d\mu(x)\\
& = \int_{X} g(x) \ d\mu(x)\\
& = \mu(g).
\end{align*}
Similarly, for every $n\in\N$,
\[
\tau_\rho^\mu(g_n^\A)=\tau(g_n) \quad \text{ and } \quad d_B^{\tau_\rho^\mu}(g^\A_n, g^\A)=d_B^\mu(g_n,g).
\]
Furthermore, note  that \[\|g^\A_n-g^\A\|_{C_0(X,\A)}=\frac{1}{\rho(1_\A)}\|g_n-g\|_{C_0(X)}\]
for every $n\in \N$.

Hence, $(g^\A_n)_{n\in \N}$ is a sequence in $D_{\tau_\rho^\mu}(C_0(X, \A))$ that converges to \[g^\A\in D_{\tau_\rho^\mu}(C_0(X, \A))\] with respect to $d_B^{\tau_\rho^\mu}$ but  $(g^\A_n)_{n\in \N}$ does not converge to $g^\A$ with respect to the C*-norm.

One can use this to come up with many noncommutative examples. For instance, we get a noncommutative example with $X=(0,1]$, $\mu=\la$ (the Lebesgue measure), $\A=M_n(\C)$, $\rho=\mathrm{Tr} $ (the usual trace on matrices), and $g=f$ and $g_n=f_n$ for every $n\in \N$ from Example \ref{e:noncomm-nonunital}.
\end{example}

\subsection{A Heine-Borel theorem for Bures density spaces}\label{s:hb}

In this section, we prove that when we have a C*-algebra $\A$ (unital or non-unital) equipped with a faithful trace $\tau$, then the Bures metric space $(D_\tau(\A), d_B^\tau)$ is not compact if and only if $\A$ is infinite-dimensional. The forward  direction was already established \cite[Proposition 3.3 and Theorem 3.5]{Aguilar25}, so the purpose of this section is to establish the reverse direction in not only the unital case, but also the non-unital case thanks to the results of the last section. We also provide various explicit examples of sequences that have no convergent subsequences in the infinite-dimensional setting.

We first prove a fact about the $L^1$-norm of Theorem-Definition \ref{p:l1-trace-topology}.

\begin{proposition}\label{p:l1-star-invariant}
    Let $\A$ be a C*-algebra (unital or non-unital). Let $\tau$ be a faithful trace on $\A$. It holds that 
    \[
    \|x\|_{\tau,1}=\|x^*\|_{\tau,1}
    \]
    for every $x\in \A$.
\end{proposition}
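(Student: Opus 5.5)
We want $\tau(|x|)=\tau(|x^*|)$, where $|x|=(x^*x)^{1/2}$ and $|x^*|=(xx^*)^{1/2}$. The plan is to use the trace property $\tau(ab)=\tau(ba)$ together with the polar-type relation between $x^*x$ and $xx^*$ given by the functional calculus: for every polynomial $p$ with $p(0)=0$ we have $x\,p(x^*x)=p(xx^*)\,x$. Approximating the square root by such polynomials will transfer this to $x|x|=|x^*|x$, and more usefully it lets us compare traces of functions of $x^*x$ and $xx^*$.

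\textbf{Step 1: monomials.} For each $k\geq 1$,
\[
\tau((x^*x)^k)=\tau\bigl(x^*\,(xx^*)^{k-1}x\bigr)=\tau\bigl((xx^*)^{k-1}x\,x^*\bigr)=\tau((xx^*)^k)
\]
by the trace property. By linearity, $\tau(p(x^*x))=\tau(p(xx^*))$ for every real polynomial $p$ with $p(0)=0$. Restricting to $p(0)=0$ keeps everything inside $\A$, so no unit is needed in the non-unital case.

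\textbf{Step 2: pass to the square root.} Let $M=\max(\|x^*x\|_\A,\|xx^*\|_\A)=\|x\|_\A^2$. The function $t\mapsto\sqrt{t}$ on $[0,M]$ vanishes at $0$, so by Weierstrass there are polynomials $p_n$ with $p_n(0)=0$ converging uniformly to $\sqrt{t}$ on $[0,M]$: approximate by arbitrary polynomials $q_n$, then set $p_n=q_n-q_n(0)$, noting $q_n(0)\to 0$. Since $\sigma(x^*x)$ and $\sigma(xx^*)$ lie in $[0,M]$ (spectra computed in $\wt{\A}$ when $\A$ is non-unital, as in Definition \ref{d:unitization}), the continuous functional calculus yields $\|p_n(x^*x)-|x|\|_\A\to 0$ and $\|p_n(xx^*)-|x^*|\|_\A\to 0$.

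\textbf{Step 3: conclude by continuity.} $\tau$ is a positive linear functional on a C*-algebra, hence bounded (the bound $\tau(|x|)\leq\|\tau\|_{\mathrm{op}}\|x\|_\A$ was already used in the proof of Theorem \ref{t:c*-norm-bures}). Taking limits in $\tau(p_n(x^*x))=\tau(p_n(xx^*))$ therefore gives $\tau(|x|)=\tau(|x^*|)$, that is, $\|x\|_{\tau,1}=\|x^*\|_{\tau,1}$.

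The only delicate point is Step 2: ensuring the approximating polynomials have no constant term, so that they make sense in a non-unital $\A$, and that norm convergence holds uniformly on the joint spectral interval. The adjustment $p_n=q_n-q_n(0)$ handles the first concern. Alternatively, one could work entirely in $\wt{\A}$ using Proposition \ref{p:faithful} and Lemma \ref{l:unitize-root-abs}, and then restrict back to $\A$.
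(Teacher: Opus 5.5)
Your proof is correct and follows the paper's argument essentially verbatim: the same constant-free polynomials $p_n=q_n-q_n(0)$, the functional calculus giving $p_n(x^*x)\to|x|$ and $p_n(xx^*)\to|x^*|$, the identity $\tau(p_n(x^*x))=\tau(p_n(xx^*))$, and continuity of $\tau$. Your Step 1 usefully spells out the monomial computation that the paper only asserts via ``since $\tau$ is a trace''; for $k=1$, read $x^*(xx^*)^{0}x$ simply as $x^*x$, since no unit is available in the non-unital case.
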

\begin{proof}
Let $(q_n)_{n\in \N}$ be a sequence of polynomials  that converges uniformly to the square root function on $[0,\|x\|^2_\A]$, which exists by the  Stone-Weierstra\ss{}  theorem. In order to ensure that the non-unital case also works, we define $p_n(r)=q_n(r)-q_n(0)$ for every $n\in \N, r\in \R$. Thus, for every $n\in \N$,  $p_n(0)=0$,  and so $p_n\in C_0(\sigma(x^*x)\setminus \{0\})$, and $(p_n)_{n\in \N}$ converges uniformly to the square root function on $[0,\|x\|^2_\A]$. Thus, by functional calculus \cite[Corollary I.3.2]{Davidson}, we have that $(p_n(x^*x))_{n\in \N}$ converges to $\sqrt{x^*x}=|x|$ with respect to $\|\cdot\|_\A$. Therefore, by continuity of positive linear functionals \cite[Lemma I.9.5]{Davidson}, we have that $(\tau(p_n(x^*x)))_{n\in \N}$ converges to $\tau(|x|). $ However, since $\tau$ is a trace, we have that 
\[
\tau(p_n(x^*x))=\tau(p_n(xx^*))
\]
for every $n\in \N$, and by functional calculus again, we have that $(p_n(xx^*))_{n\in \N}$ converges to $\sqrt{xx^*}=|x^*|$. Therefore, by continuity of $\tau$, we have that $(\tau(p_n(xx^*)))_{n\in \N}$ converges to $\tau(|x^*|)$. Consequently, by uniqueness of limits we have that
\[
\tau(|x|)=\tau(|x^*|)
\]
as desired.
\end{proof}
And now, the Heine-Borel theorem for Bures density spaces.
\begin{theorem}\label{t:hb-bures}
    Let $\A$ be a C*-algebra (unital or non-unital) equipped with a faithful trace $\tau$. The following are equivalent:
    \begin{enumerate}
        \item the Bures density space $(D_\tau(\A), d_B^\tau)$ is not compact;
        \item $\A$ is infinite dimensional.
    \end{enumerate}
\end{theorem}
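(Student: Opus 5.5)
We prove the two directions separately. The paper notes that the implication (2)$\Rightarrow$(1) was established in \cite[Proposition 3.3 and Theorem 3.5]{Aguilar25}, so the new content is (1)$\Rightarrow$(2). We argue its contrapositive: if $\A$ is finite dimensional, then $(D_\tau(\A),d_B^\tau)$ is compact. A finite-dimensional C*-algebra is unital (\cite[Theorem III.1.1]{Davidson}), so we are in the unital setting. There the Fuchs--van de Graaf inequality \cite[Proposition 2.7]{Farenick-Rahaman17} (equivalently Theorem \ref{t:l1-homeo} in the non-unital form) shows that the Bures topology and the $L^1$-topology coincide on $D_\tau(\A)$. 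It therefore suffices to show that $D_\tau(\A)$ is compact in the $L^1$-topology.

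\textbf{Step 1: $L^1$-norm equivalent to C*-norm.} Since $\A$ is finite dimensional, all norms on it are equivalent. The map $\|\cdot\|_{\tau,1}$ is a norm on $\A$ because $\tau$ is faithful (\cite[Proposition 2.1]{Farenick-Rahaman17}), so it is equivalent to $\|\cdot\|_\A$. Proposition \ref{p:l1-star-invariant} is not even needed for this; it is a convenient tool if one instead wants an explicit bound using the self-adjointness of elements of $D_\tau(\A)$.

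\textbf{Step 2: Closed and bounded.} The set $D_\tau(\A)=\A_+\cap\tau^{-1}(\{1\})$ is closed, since $\A_+$ is norm closed and $\tau$ is continuous. It is bounded: for $a\in D_\tau(\A)$ we have $\|a\|_{\tau,1}=\tau(|a|)=\tau(a)=1$, because $|a|=a$ for positive $a$. Hence $D_\tau(\A)$ is the closed, bounded subset $\A_+\cap\tau^{-1}(1)$ of the unit sphere of $\|\cdot\|_{\tau,1}$ in the finite-dimensional space $\A$. By the classical Heine--Borel theorem it is compact in the norm topology, which by Step 1 is the $L^1$-topology. By the coincidence of topologies, it is therefore compact in $d_B^\tau$.

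For completeness we could also reprove (2)$\Rightarrow$(1), including the non-unital case. We would use Theorem \ref{t:l1-homeo} to reduce the question to $L^1$-compactness. An infinite-dimensional C*-algebra contains an infinite sequence of nonzero mutually orthogonal positive elements $a_n$, obtained from an infinite-spectrum self-adjoint element via functional calculus with disjointly supported bump functions. Normalize $x_n=a_n/\tau(a_n)\in D_\tau(\A)$. Orthogonality gives $\sqrt{x_n}\sqrt{x_m}=0$, so $F_\tau(x_n,x_m)=0$ and $d_B^\tau(x_n,x_m)=1$ for $n\neq m$. This sequence has no Cauchy subsequence, so the space is not compact. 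The main obstacle in this direction is producing the orthogonal family; the standard fact that infinite-dimensional C*-algebras contain self-adjoint elements with infinite spectrum handles it. The direction we actually need, (1)$\Rightarrow$(2), is routine once the equivalence of norms in finite dimensions is in hand.
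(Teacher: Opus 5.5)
You have the two directions' provenance reversed. The paper cites \cite[Proposition 3.3 and Theorem 3.5]{Aguilar25} for (1)$\Rightarrow$(2), i.e.\ for ``$\A$ finite dimensional $\Rightarrow$ $(D_\tau(\A),d_B^\tau)$ compact''. That is exactly what your Steps 1--2 prove, and they prove it correctly. The new content of the theorem is (2)$\Rightarrow$(1), which you relegate to a ``for completeness'' aside.

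Nothing is lost mathematically, because that aside is a correct proof. But it is the heart of the theorem and should be written out as such. The details to make explicit are these:
\begin{itemize}
\item In the non-unital case, choose the points $\la_n\in\sigma(h)\setminus\{0\}$ with pairwise disjoint closed neighbourhoods not containing $0$. Then the bump functions $f_n$ vanish near $0$ and $a_n=f_n(h)\in\A$.
\item $a_n\neq 0$ because the functional calculus is isometric, and $\tau(a_n)>0$ by faithfulness.
\item $\sqrt{a_n}\sqrt{a_m}=(\sqrt{f_n}\sqrt{f_m})(h)=0$, so $F_\tau(x_n,x_m)=0$.
\end{itemize}
Your appeal to Theorem \ref{t:l1-homeo} is unnecessary. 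Once you have $d_B^\tau(x_n,x_m)=1$ for $n\neq m$, you only need that $d_B^\tau$ is a metric (Theorem \ref{t:bures-metric-non-unital} in the non-unital case), since a compact metric space is sequentially compact.

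Your argument for (2)$\Rightarrow$(1) takes a genuinely different route from the paper's. The paper argues by contrapositive. Compactness in $d_B^\tau$ gives compactness of $D_\tau(\A)$ in $\|\cdot\|_{\tau,1}$ via Fuchs--van de Graaf. Any $x$ in the closed unit ball of $(\A,\|\cdot\|_{\tau,1})$ is then split into $\mathrm{Re}(x)_\pm$ and $\mathrm{Im}(x)_\pm$, each of $L^1$-norm at most $1$ (this is where Proposition \ref{p:l1-star-invariant} is used). Each piece is normalized into $D_\tau(\A)$, and successive subsequence extraction shows that this unit ball is compact. Riesz's theorem then forces $\A$ to be finite dimensional.

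That argument is elementary and self-contained, but it yields only non-compactness. Yours bypasses the $L^1$ comparison entirely and gives more: a $1$-separated sequence, so $D_\tau(\A)$ is not even totally bounded. In effect it turns the paper's explicit disjoint-support examples into a general proof. The price is the input that every infinite-dimensional C*-algebra has a self-adjoint element with infinite spectrum. This is standard but a genuine structural theorem (usually proved via a maximal abelian self-adjoint subalgebra), so you should cite it rather than treat it as routine.
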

\begin{proof}
    (1)$\implies$(2) is given by \cite[Proposition 3.3 and Theorem 3.5]{Aguilar25}.

    (2)$\implies$(1) We proceed by contrapositive. Suppose  that the Bures density space $(D_\tau(\A), d_B^\tau)$ is compact. As  a result,  $(D_\tau(\A), d_1^\tau)$ is compact by \cite[Corollary 2.8]{Farenick-Rahaman17} for the unital case or by Theorem \ref{t:l1-homeo} for the non-unital case. Recall that $(\A, \|\cdot\|_{\tau,1})$ is a normed vector space by \cite[Proposition 2.1]{Farenick-Rahaman17} for the unital case or by \ref{p:l1-trace-topology} for the non-unital case, and  the topology induced on $D_\tau(\A)$ by $ d_1^\tau$ is just the subspace topology induced by $\|\cdot\|_{\tau,1}$ on $D_\tau(\A)$   as a subset of $\A$.  Let $(x_n)_{n\in \N}$ be a sequence in 
    \[
    Ball(\A, \|\cdot\|_{\tau,1})=\{a \in \A: \|a\|_{\tau,1}\leq 1\}.
    \]
    We will show that $(x_n)_{n\in \N}$ has a converging subsequence with respect to the $L^1$-metric $\|\cdot\|_{\tau,1}$ whose limit is in $Ball(\A, \|\cdot\|_{\tau,1})$. 
    
    If $(x_n)_{n\in \N}$ had infinitely many zero terms, then it would have a converging subsequence in any metric. Hence, we assume that $(x_n)_{n\in \N}$ has only finitely many zero terms, in which case we can reduce to a subsequence that has no zero terms, and therefore, we assume for the remainder of the proof that $(x_n)_{n\in \N}$ has no zero terms.

    Let $n\in \N$. Consider
    \[
    Re(x_n)=\frac{1}{2}(x_n+x_n^*) \quad \text{ and } \quad Im(x_n)=\frac{1}{2i}(x_n-x_n^*).
    \]
   It holds that 
    \[
    \|Re(x_n)\|_{\tau,1}\leq\frac{1}{2}(\|x_n\|_{\tau,1}+\|x_n^*\|_{\tau,1})= \frac{1}{2}(\|x_n\|_{\tau,1}+\|x_n\|_{\tau,1})\leq 1
    \]
    by Proposition \ref{p:l1-star-invariant}. Similarly $\|Im(x_n)\|_{\tau,1}\leq 1$.

    Next, note that 
    \[
    Re(x_n)=Re(x_n)_+-Re(x_n)_-,
    \]
    where $Re(x_n)_+,Re(x_n)_-\in \A_+$ by \cite[Corollary I.4.2]{Davidson} as $Re(x_n)$ is self-adjoint. Now set \[y_n=Re(x_n)_+.\]
    Similarly, as above, we can either have a subsequence of zero terms or produce a subsequence that has no zero terms, and so we assume for the remainder of the proof that $(y_n)_{n\in \N}$ has no zero terms. 
By \cite[Corollary I.4.2]{Davidson}, we have that 
\[
Re(x_n)_+=\frac{1}{2}(Re(x_n)+|Re(x_n)|).
\]
Therefore, since the absolute value of a positive element is itself, we have that 
\[
\|y_n\|_{\tau,1}\leq \frac{1}{2}(\|Re(x_n)\|_{\tau,1}+\||Re(x_n)|\|_{\tau,1})=\frac{1}{2}(\|Re(x_n)\|_{\tau,1}+\|Re(x_n)\|_{\tau,1})\leq 1.
\]
Thus, as $[0,1]$ is compact, we have that $(\|y_n\|_{\tau,1})_{n \in \N}$ has a converging subsequence $(\|y_{n_m}\|_{\tau,1})_{m\in \N}$ that converges to some $\alpha \geq 0$.  Now, as $\tau$ is faithful, we have that $\|y_{n_m}\|_{\tau,1}>0$ for every $m \in \N$ since $y_{n} \neq 0$ for every $n\in \N$. Thus for each $m\in \N$, we may define
\[
z_{n_m}=\frac{1}{\|y_{n_m}\|_{\tau,1}}y_{n_m}.
\]
By construction, we have that $z_{n_m}\in \A_+$ and $\tau(z_{n_m})=1$ for every $m\in \N$, and thus $(z_{n_m})_{m\in \N}$ is a sequence in $D_\tau(\A)$. Therefore, $(z_{n_m})_{m\in \N}$ has a converging subsequence $(z_{n_{m_j}})_{j\in \N}$  that converges to some $b\in D_\tau(\A)$ with respect to $\|\cdot\|_{\tau,1}$. Hence, the sequence 
\[
(\|y_{n_{m_j}}\|_{\tau,1}z_{n_{m_j}})_{j\in \N}
\]
converges to $\alpha b$ with respect to $\|\cdot\|_{\tau,1}$. But 
\[
y_{n_{m_j}}=\|y_{n_{m_j}}\|_{\tau,1}z_{n_{m_j}}
\]
for every $j\in \N$. Similarly, we can find a subsequence of $(Re(x_{n_{m_j}})_-)_{j\in \N}$ that converges, and then continue this process for a further  subsequence of  $(Im(x_n)_+)_{n\in \N}$ and then a  further subsequence of $(Im(x_n)_-)_{n\in \N}$ to find an subsequential indexing $(N_k)_{k\in \N}$ such that 
\[
(Re(x_{N_k})_+)_{k\in \N}, (Re(x_{N_k})_-)_{k\in \N}, (Im(x_{N_k})_+)_{k\in \N}, (Im(x_{N_k})_-)_{k\in \N}
\]
all converge, and thus since   $x_{N_k}$ is just a linear combination of these terms for each $k\in \N$, we have that $(x_{N_k})_{k\in \N}$ converges to some $c\in \A$ with respect to $\|\cdot\|_{\tau,1}$. Since $ Ball(\A, \|\cdot\|_{\tau,1})$ is closed with respect to $\|\cdot\|_{\tau,1}$, we have that  $c\in Ball(\A, \|\cdot\|_{\tau,1})$. Thus, $ Ball(\A, \|\cdot\|_{\tau,1})$ is compact with respect to $\|\cdot\|_{\tau,1}$. Therefore, $\A$ is finite dimensional by the Heine-Borel theorem.
\end{proof}
We focus particularly in the non-unital case for the next result.
\begin{corollary}
  Let $\A$ be a C*-algebra equipped with a faithful trace $\tau$. If $\A$ is non-unital, then the Bures density space $(D_\tau(\A), d_B^\tau)$ is not compact.
\end{corollary}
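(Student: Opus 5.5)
This is an immediate consequence of Theorem \ref{t:hb-bures}, once we know that a non-unital C*-algebra is infinite dimensional. The plan is to check that fact and then apply the implication (2)$\implies$(1) of Theorem \ref{t:hb-bures}.

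The first step is to show that a non-unital $\A$ is infinite dimensional. This follows from the fact, recalled at the start of Section \ref{s:non-u-bures}, that every finite-dimensional C*-algebra is unital \cite[Theorem III.1.1]{Davidson}. By contraposition, since $\A$ is non-unital, $\A$ cannot be finite dimensional.

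The second step is to invoke Theorem \ref{t:hb-bures}. Its hypotheses are that $\A$ is a C*-algebra, unital or non-unital, equipped with a faithful trace $\tau$, and both hold here. Condition (2) of that theorem holds by the first step, so the equivalence gives condition (1): $(D_\tau(\A), d_B^\tau)$ is not compact.

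There is no real obstacle, since the argument uses only the cited structure theorem and the already established Heine--Borel equivalence. The only point needing care is that Theorem \ref{t:hb-bures} is used in the non-unital setting. There, $d_B^\tau$ is indeed a metric by Theorem \ref{t:bures-metric-non-unital}, so "compact" carries its usual metric meaning, and the theorem was stated to cover both the unital and non-unital cases.
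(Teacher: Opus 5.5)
Your proposal is correct and is essentially the paper's own proof. The paper likewise uses \cite[Theorem III.1.1]{Davidson} to deduce that a non-unital $\A$ is infinite dimensional, and then applies the implication (2)$\implies$(1) of Theorem \ref{t:hb-bures}.
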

\begin{proof}
    By \cite[Theorem III.1.1]{Davidson}, we have that $\A$ is infinite dimensional, and so the result follows from Theorem \ref{t:hb-bures}.
\end{proof}
Although this corollary implies that the density space in any non-unital C*-algebra equipped with a faithful trace must have a sequence with no converging subsequence with respect to the associated Bures metric, we still think its important to provide some examples of explicit sequences that have no converging subsequences in specific cases to provide more understanding of the Bures metric.

\begin{example}
    For each $n\in \N, x\in (0,1]$, let
\[ f_n(x) = \begin{cases} 
      2^{2(n+1)}(x-2^{-n}) & x\in (2^{-n},2^{-n}+2^{-(n+1)}) \\
      -2^{2(n+1)}(x-2^{-n})+2^{n+2} & x\in [2^{-n}+2^{-(n+1)},2^{1-n}) \\
      0 & \text{otherwise}
   \end{cases}
\]
(See Figure \ref{f:noncompact}).
\begin{figure}
    \centering
    \includegraphics[width=10cm]{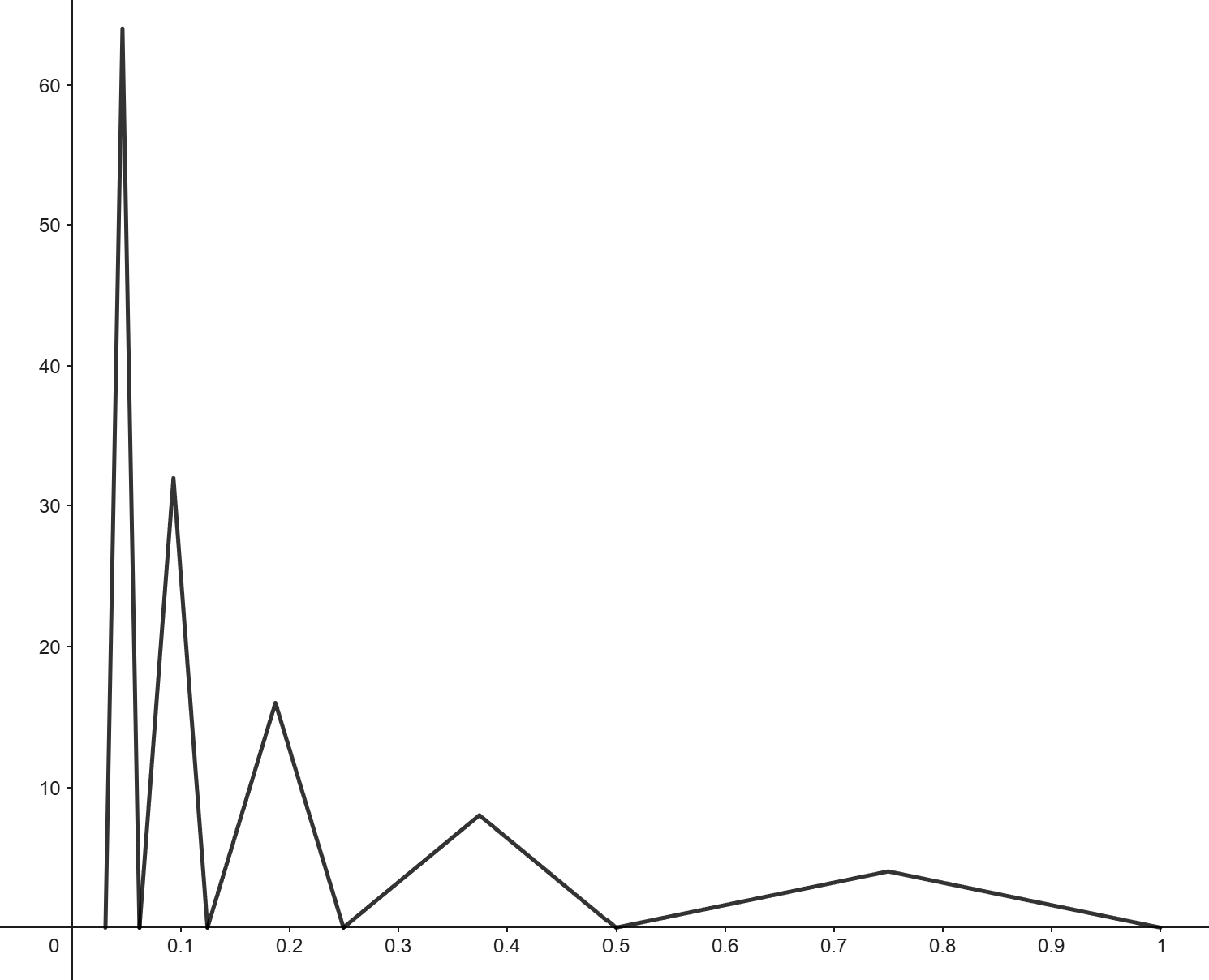}
    \caption{$f_1,f_2,f_3,f_4,f_5$}
    \label{f:noncompact}
\end{figure}
By construction, $f_n\in D_\la (C_0((0,1]))$ for every $n\in \N$ where $\la$ is the Lebesgue measure. Moreover, as each $f_n$ and $f_m$ have disjoint support for every $n,m\in \N, n\neq m$, then so do $\sqrt{f_n}$ and $\sqrt{f_m}$ for every $n,m\in \N, n\neq m$. Hence, for every $n,m\in \N, n\neq m$, we have
\[
F_\la(f_n,f_m)=\la(|\sqrt{f_n}\sqrt{f_n}|)=\la(0)=0,
\]
and thus
\[
d_B^\la(f_n,f_m)=\sqrt{1-0}=1.
\]
Therefore, $(f_n)_{n\in \N}$ has no converging subsequence with respect to the Bures metric. And so $(D_\la(C_0((0,1])), d_B^\la)$ is not compact (in fact, it is not even totally bounded since $(f_n)_{n\in \N}$ is a sequence with no Cauchy sequence). One can observe that this also provides an example for the unital (but still infinite dimensional) C*-algebra, $C([0,1])$
\end{example}

The examples so far have been of infinite-dimensional algebras that are not AF, and so, in the following example, we provide an AF example.

\begin{example}\label{e:quantized}
    In this example, we consider the non-unital C*-algebra $C_0(\N)$. For the faithful trace, we first define a standard faithful probability  measure on the power set $\sigma$-algebra on $\N$, which is just the Borel $\sigma$-algebra associated to the discrete topology. For each $A\subseteq \N$, define
    \[
    \om(A)=\begin{cases}
        \sum_{n\in A} \frac{1}{2^{n}} & A\neq \emptyset\\
        0 & A=\emptyset
    \end{cases}
    \]
    As an abuse of notation, let $\om$ denote the associate faithful trace on $C_0(\N)$ given by integration with respect to $\om$. For each $n\in \N$, let $\chi_n$ be the characteristic function of $\{n\}$ and define
    \[
    a_n=2^n \chi_n.
    \]
    It holds that $a_n\in D_\om (C_0(\N))$ for every $n\in \N$
    since $a_n(m)\geq 0$ for every $m\in \N$ and 
    \[
    \om(a_n)=\int_\N 2^n\chi_n \ d\om=2^n \int_\N \chi_{\{n\}} \ d \om=2^n \om(\{n\})=2^n\cdot \frac{1}{2^n}=1
    \]
    However, for every $n,m\in \N, n\neq m$, we have that $a_n$ and $a_m$ have disjoint support, and thus, similarly to the last example
    \[
    d_B^\om(a_n,a_m)=1.
    \]
    Hence, $(f_n)_{n\in \N}$ has no converging sequence with respect to $d_B^\om$.  And so, the metric space  $(D_\la(C_0(\N)), d_B^\om)$ is not compact (in fact, it is not even totally bounded since $(f_n)_{n\in \N}$ is a sequence with no Cauchy sequence).

    One can extend this example to the unital (but still infinite-dimensional) case of $C(\overline{\N})$, where $\overline{\N}=\N\cup \{\infty\}$ is the Alexandroff compactification of $\N$, and $\{\infty\}$ is given measure $0$. 
\end{example}
Now, for some noncommutative examples.
\begin{example}\label{e:quantized-N}
 One can    follow the same process Example \ref{e:noncomm-nonunital} using Proposition \ref{p:faithful-extend} applied to the previous two examples to provide many  explicit examples of sequences in Bures density spaces with no converging subsequences in unital or non-unital noncommutative spaces.
\end{example}

\section{The non-unital case of the quantum metric on the density space}\label{s:qm-non-u}

As shown in \cite{Aguilar25}, given a compact quantum metric space equipped with a faithful trace, one can induce a metric on the density space. However, this was only done for the unital case. Therefore, as done with the Bures metric in Section \ref{s:non-u-bures}, we provide a quantum metric on the density space in the non-unital setting. However, in full generality, the quantum metric on the density space in the non-unital setting will be an extended metric at best (meaning that it can take value $\infty$) as alluded to in Definition \ref{d:l-pair} but confirmed via Example \ref{e:extended}, but in this section, we  provide a natural sufficient condition for the quantum locally compact metric space in order for the quantum metric to be a metric (not   taking value infinity) by introducing the notion of a quantum Lipschitz triple, while also introducing many new families of quantum locally compact metric spaces in the both the commutative and noncommutative cases. We also confirm in Example \ref{e:non-qlt-comm} and Example \ref{e:non-qlt-noncomm} that the class of quantum Lipschitz triples is strictly contained in the class of quantum locally compact metric spaces.

We first provide further evidence that the definition of a quantum Lipschitz triple is satisfactory since the induced quantum metric topology is shown to be weaker than the topology induced by the C*-norm. Next, although by \cite[Proposition 3.3 and Corollary 3.6]{Aguilar25}  we have that the topology induced by the quantum metric on the density space is compact for finite-dimensional C*-algebras, it seems that the question of compactness in the infinite-dimensional setting regardless of whether the C*-algebra is unital or not is a lot more subtle than for the Bures metric where we were able to answer this question in full generality in the previous section. Similarly, it does not seem clear whether the Bures metric topology is topologically equivalent to the quantum metric topology in the infinite-dimensional setting even though they are topologically equivalent in the finite-dimensional setting by \cite[Corollary 3.6]{Aguilar25}. However, we finish this section by providing a non-unital (and thus infinite-dimensional) and some noncommutative families of examples for which the Bures metric topology and quantum metric topology on the density space are not uniformly equivalent and for which the quantum metric topology is not compact. We also use these examples to show that the C*-norm induced metric is also not uniformly equivalent to the quantum metric in general.

To define our quantum metric, we begin with the following proposition. The proof is very similar to that of \cite[Proposition 3.1]{Aguilar25} except for one key difference (the lack of a unit), and so, we include the whole proof.

\begin{proposition}\label{p:density-state-map}
Let $\A$ be a non-unital C*-algebra. Let $\tau$ be a faithful trace   on $\A$. 
The map 
\[
\Phi_\tau : a \in D_\tau(\A) \longmapsto \varphi_a^\tau \in S(\A)
\]
is well-defined and injective, where $\vhi^\tau_a(b)=\tau(ab)$ for every $a\in D_\tau(\A)$ and $b\in \A$. 
\end{proposition}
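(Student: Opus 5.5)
Fix $a\in D_\tau(\A)$. First I show that $\vhi^\tau_a$ is a state on $\A$, and then that $\Phi_\tau$ is injective.

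\emph{Linearity and positivity.} Linearity of $\vhi^\tau_a$ is immediate from linearity of $\tau$. For positivity, take $b\in\A_+$. Since $a\in\A_+$, $\sqrt{a}\in\A$, and the trace property gives
\[
\vhi_a^\tau(b)=\tau(\sqrt{a}\sqrt{a}\,b)=\tau(\sqrt{a}\,b\sqrt{a}).
\]
The element $\sqrt{a}\,b\sqrt{a}=(\sqrt{b}\sqrt{a})^*(\sqrt{b}\sqrt{a})$ is positive, so $\vhi_a^\tau(b)\geq 0$.

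\emph{Norm one.} Here the lack of a unit matters: in the unital case one simply evaluates at $1_\A$. Instead I will compute $\|\vhi_a^\tau\|_\mathrm{op}$ as a limit along an approximate identity, using \cite[Lemma I.9.5]{Davidson}: for a positive functional $\vhi$ and any approximate identity $(e_\la)$ of $\A$, $\|\vhi\|_\mathrm{op}=\lim_\la \vhi(e_\la)$.

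\begin{itemize}
\item Upper bound: if $0\leq e_\la\leq 1$ in $\wt{\A}$, then $\sqrt{a}\,e_\la\sqrt{a}\leq a$, so $\vhi_a^\tau(e_\la)=\tau(\sqrt{a}\,e_\la\sqrt{a})\leq\tau(a)=1$.
\item Lower bound: by the trace property, $\vhi_a^\tau(e_\la)=\tau(\sqrt{a}\,e_\la\sqrt{a})$. Since $e_\la\sqrt{a}\to\sqrt{a}$ in norm, $\sqrt{a}\,e_\la\sqrt{a}\to a$ in norm. By norm continuity of $\tau$, $\vhi_a^\tau(e_\la)\to\tau(a)=1$.
\end{itemize}

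Hence $\|\vhi_a^\tau\|_\mathrm{op}=1$ and $\vhi_a^\tau\in S(\A)$. This is the main point where the argument departs from \cite[Proposition 3.1]{Aguilar25}.

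\emph{Injectivity.} Suppose $\vhi^\tau_a=\vhi^\tau_{a'}$ for some $a,a'\in D_\tau(\A)$. Then $\tau((a-a')b)=0$ for every $b\in\A$. Taking $b=a-a'$, which is self-adjoint and lies in $\A$, gives
\[
\tau\bigl((a-a')^*(a-a')\bigr)=\tau\bigl((a-a')^2\bigr)=0.
\]
Faithfulness of $\tau$ then forces $a=a'$. This step needs no unitization or approximate identity.
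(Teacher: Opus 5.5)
Your proof is correct and follows essentially the same route as the paper: positivity from the trace property, norm one via \cite[Lemma I.9.5]{Davidson} applied to an approximate identity together with norm continuity of $\tau$, and injectivity by testing against $a-a'$ and invoking faithfulness. The differences are cosmetic: you factor through $\sqrt{a}$ where the paper writes $a=b^*b$, and your separate upper bound $\vhi_a^\tau(e_\la)\leq 1$ is redundant, since the lemma already identifies $\|\vhi_a^\tau\|_\mathrm{op}$ with the limit, which you compute to be $1$.
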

\begin{proof}
   Let $a\in \A_+$ and so there exists $b\in \A$ such that $a=b^*b$ by \cite[Lemma I.4.3]{Davidson}. Let $c\in \A$. Then since $\tau$ is a trace,
   \[
   \varphi_a^\tau(c^*c)= \tau(b^*bc^*c)= \tau(cb^*bc^*)= \tau(cb^*(cb^*)^*)\geq 0.
   \]
   Thus, $\varphi^\tau_a$ is a positive linear functional.

   Next, let $(e_\la)_{\la\in \La}$ be an approximate identity for $\A$, which exists by \cite[Theorem I.4.8]{Davidson}. By \cite[Lemma I.9.5]{Davidson}, we have that $\lim_\la \varphi_a^\tau(e_\la)=\|\varphi_a^\tau\|_\mathrm{op}$.  Hence as $a \in D_\tau(\A)$ and $\tau$ is continuous,
   \[
   \|\varphi_a^\tau\|_\mathrm{op}=\lim_\la \tau(ae_\la)=\tau(a)=1.
   \]
 Thus $\varphi_a^\tau$ is a state, and therefore, $\Phi_\tau$ is well-defined.

 Next, let $a,a'\in \A_+$ such that $\Phi_\tau(a)=\Phi_\tau(a')$. Hence $\varphi_a^\tau((a-a')^*)=\varphi^\tau_{a'}((a-a')^*)$. Thus $\tau(a(a-a')^*)=\tau(a'(a-a')^*)$, and so $\tau((a-a')(a-a')^*)=0$. Therefore, $a-a'=0$ as $\tau$ is faithful, which implies that $a=a'.$  Thus,  $\Phi_\tau$ is injective.
\end{proof}
This allows us to define a new metric on $D_\tau(\A)$, when $\A$ is non-unital, using quantum locally compact metric spaces (note that we only need a Lipschitz triple for this, and so,  the following still makes sense for a quantum locally compact metric spaces). Indeed

\begin{definition}\label{d:quantum-density-metric}
   Let $\A$ be a non-unital C*-algebra equipped with a faithful trace $\tau$. Let $(\A, L, \M)$ be a Lipschitz triple (Definition \ref{d:l-triple}). For every $x,y \in D_\tau(\A)$, define
    \[
    d_L^\tau(x,y)=mk_L(\Phi_\tau(x),\Phi_\tau(y))=mk_L(\varphi^\tau_x,\varphi^\tau_y),
    \]
    which defines an extended metric on $D_\tau(\A)$ since $\Phi_\tau$    is well-defined and injective by Proposition \ref{p:density-state-map}. We will still call $d_L^\tau$ a quantum metric.
\end{definition}

As mentioned above, it might be the case that the Monge-Kantorovich metric might not be finite on every pair of states (so, at best, an extended metric as seen in the following example). Now, this is not a flaw, but for our setting, it makes sense to desire a true metric (not just an extended metric) since we aim to compare our quantum metric on the density space with the Bures metric, which is not only a true metric but also a bounded metric (the condition we introduce in Definition \ref{d:q-l-triple} will in fact give us a {\em totally} bounded metric  as seen in Theorem \ref{t:qlt-metric}), and it would be not be a worthwhile task to compare an extended metric or even an unbounded metric with a bounded metric (the Bures metric). However, after looking through the literature, we did not find an explicit  example of a quantum locally compact metric for which its associated Monge-Kantorovich metric takes value infinity on a pair of states, and so we provide an example of this now since this is an issue of particular importance in this article.

\begin{example}\label{e:extended}
    Consider $(\R, \dd_1)$, where $\dd_1$ is the usual absolute value metric on $\R$. Since  $(\R, \dd_1)$ is a locally compact metric space, we have that the Lipschitz triple $(C_0(\R), L_{\dd_1} , C_0(\R))$ is a quantum locally compact metric space by \cite[Theorem 4.1]{Latremoliere12b}, where  $L_{\dd_1}$ is the Lipschitz constant associated to $(\R, \dd_1)$. That is
    \[
    L_{\dd_1}(f)=\sup \left\{\frac{|f(x)-f(y)|}{\dd_1(x,y)} : x,y \in \R,  x\neq y\right\}
    \]
    for every $f\in C_0(\R)$.

    Now, define
    \[
    h: x\in \R\longmapsto \frac{1}{\pi(1+x^2)}\in \R
    \]
    and consider the faithful measure $\la_h$ on $\R$ induced by $h$ and the Lebesgue measure $\la$. As $\int_\R h\  d \la =1$, we have that the associated faithful positive linear functional $\tau_{\la_h}$ given by integration with respect to $\la_h$ is a state. Also, consider the Dirac point mass at $0$, denoted $\de_0$ and defined by 
    \[
    \de_0(f)=f(0)
    \]
    for every $f\in C_0(\R)$. We will show that 
    \[
    mk_{L_{\dd_1}}(\tau_{\la_h}, \de_0)=\infty.
    \]

    For this, we define a sequence $(f_n)_{n\in \N}$ in $C_0(\R)$ such that $L_{\dd_1}(f_n)\leq 1$ and $\de_0(f_n)=0$ for every $n\in \N$. Let $n\in \N$ and let $x\in \R$, define
    \[
    f_n(x)=\begin{cases}
        0 & x\in (-\infty, 0)\\
       x  & x\in [0, \tan(\frac{\pi}{2}(1-2^{-n}))]\\
     g_n(x) & x\in ( \tan(\frac{\pi}{2}(1-2^{-n})), \infty)
    \end{cases}
    \]
    where $g_n$ is any non-negative continuous function on $( \tan(\frac{\pi}{2}(1-2^{-n})), \infty)$ such that $g_n(\tan(\frac{\pi}{2}(1-2^{-n})))=\tan(\frac{\pi}{2}(1-2^{-n}))$, $g_n$ is decreasing,  $L_{\dd_1}(g_n)\leq 1$, and $\lim_{x\to\infty} g_n(x)=0$ (one can just take a line with slope $-1$ that intersects $(\tan(\frac{\pi}{2}(1-2^{-n})),\tan(\frac{\pi}{2}(1-2^{-n}))$ and impose that it vanishes once it intercepts the $x$-axis).  Now
    \begin{align*}
       \tau_{\la_h}(f_n)&=\int_{[0,\tan(\frac{\pi}{2}(1-2^{-n}))]} x\cdot \frac{1}{\pi(1+x^2)} \ d\la(x) \\
        & \quad \quad + \int_{[\tan(\frac{\pi}{2}(1-2^{-n})), \infty)} g_n(x)  \cdot \frac{1}{\pi(1+x^2)} \ d\la(x) \\
        & \geq \int_{[0,\tan(\frac{\pi}{2}(1-2^{-n}))]} x\cdot \frac{1}{\pi(1+x^2)} \ d\la(x)+0\\
        & = \frac{1}{2\pi}\ln\left(\left( \tan\left(\frac{\pi}{2}(1-2^{-n})\right)\right)^2+1 \right),
    \end{align*}
    which tends to $\infty$ as $n \to \infty$ since $\frac{\pi}{2}(1-2^{-n}))$ tends to $\pi/2$ from the left. However, $\de_0(f_n)=0$. Thus for every $n\in \N$
    \begin{align*}
        mk_{L_{\dd_1}}(\tau_{\la_h}, \de_0)& = \sup \{|\tau_{\la_h}(f)-\de_0(f)| : f\in sa(C_0(\R))), L_{\dd_1}(f)\leq 1 \}\\
        & \geq |\tau_{\la_h}(f_n)-\de_0(f_n)|\\
        & = \tau_{\la_h}(f_n).
    \end{align*}
    And therefore, 
    \[
      mk_{L_{\dd_1}}(\tau_{\la_h}, \de_0)=\infty.
    \]
\end{example}
Hence, it is truly the case that one must be concerned if they are dealing with an extended metric or not. Thus, now that we have a good understanding of this finiteness issue, we find a natural  sufficient condition for when a Lipschitz triple provides a metric on the density space (not just an extended metric). But first, we show that this condition on the Lipschitz triple is enough for it to be a quantum locally compact metric space, so that we are   in the correct setting. For this, we define the following.

\begin{definition}\label{d:q-l-triple}
    Let $\A$ be a non-unital C*-algebra and let $(\A, L,\M)$ be a Lipschitz triple. We say that $(\A, L,\M)$ is a {\em quantum Lipschitz triple} if $(\wt{\A}, L)$ is a compact quantum metric space.
\end{definition}
Without looking at \cite{Latremoliere12b}, one might suspect that a quantum Lipschitz triple would be a good candidate for the definition of a quantum locally compact metric space. However, this definition would be too restrictive and not allow for the rich theory and examples developed in \cite{Latremoliere12b, Latremoliere25} (see Example \ref{e:non-qlt-comm} and Example \ref{e:non-qlt-noncomm} for   examples of some standard quantum locally compact metric spaces (a commutative and noncomumutative one)  that are not  quantum Lipschitz triples). Before we confirm that quantum Lipschitz triples are, in fact, quantum locally compact metric spaces, we provide some examples, which includes many commutative and noncommutative examples, and in particular, many homogeneous and approximately homogeneous C*-algebras. And combining the following result with Theorem \ref{t:q-l-triple-qlcms}, these examples provide many new families of quantum locally compact metric spaces.

\begin{proposition}\label{p:comm-ex}
    Let $(X, d)$ be a locally compact metric space such that $d$ extends to a metric $\wt{d}$ on the one-point compactification $\overline{X}=X\cup \{\infty\}$ that metrizes its topology. Let $\A=\overline{\cup_{n\in \N}\A_n}^{\|\cdot\|_\A}$ be a unital AF algebra equipped with faithful tracial state $\tau$ such that $\A_1=\C1_\A$, and for every $n\in \N$, $\A_n$ is a finite-dimensional C*-subalgebra of $\A$, $1_\A\in \A_n$, and $\A_n\subseteq \A_{n+1}$. Let $(\be(n))_{n\in \N}$ be a sequence of positive reals that converges to $0$.  Let $L_{\wt{d}}^{\A, \be}$ be the extended seminorm on $sa(C(\overline{X}, \A))$ of \cite[Theorem 3.10]{Aguilar19}.

    It holds that $(C_0(X, \A), L_{\wt{d}}^{\A, \be}, C_0(X, \C1_\A))$ is a quantum Lipschitz triple.
\end{proposition}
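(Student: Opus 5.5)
The plan is to identify the unitization $\wt{C_0(X,\A)}$ with a C*-subalgebra of $C(\overline{X},\A)$, so that \cite[Theorem 3.10]{Aguilar19} can be used directly. Concretely, the map $(f,\al)\mapsto f+\al 1_\A$, where $f$ is extended by $f(\infty)=0$, is an injective unital *-homomorphism. Its image is
\[
\mathcal{B}=\{g\in C(\overline{X},\A): g(\infty)\in \C1_\A\}.
\]
Since unitizations of non-unital algebras are unique up to isomorphism, $\wt{C_0(X,\A)}\cong \mathcal{B}$. I will restrict $L_{\wt{d}}^{\A,\be}$ to $sa(\mathcal{B})$ and regard this as the seminorm on $sa(\wt{C_0(X,\A)})$. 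Two consistency checks are needed. First, on $(f,\la)$ the seminorm should equal $L(f)$, as Definition \ref{d:l-pair} requires. This holds provided the seminorm of \cite{Aguilar19} vanishes on constants $1_\A$ and is translation invariant, which it is, being built from a Lipschitz constant in the $X$ direction together with $\be$-weighted distances to the $\A_n$ (and $\A_1=\C1_\A$).

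Next I verify that $(\wt{C_0(X,\A)},L)$ is a compact quantum metric space, using the fact that \cite[Theorem 3.10]{Aguilar19} gives that $(C(\overline{X},\A),L_{\wt{d}}^{\A,\be})$ is a compact quantum metric space. There are three steps.

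\emph{Kernel.} The kernel of $L$ on $sa(C(\overline{X},\A))$ is $\R 1$, and $\R1\subseteq sa(\mathcal{B})$. So the kernel of the restriction is also $\R1$.

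\emph{Density of the domain.} I need $\dom{L}\cap sa(\mathcal{B})$ to be dense in $sa(\mathcal{B})$, and I will show this by approximating explicitly. Given $g\in sa(\mathcal{B})$, write $g=\al 1+h$ with $h(\infty)=0$. Then approximate $h$ by finite sums $\sum_k \phi_k a_k$, where each $\phi_k$ is a real Lipschitz function on $(\overline{X},\wt{d})$ vanishing at $\infty$ and each $a_k\in sa(\cup_n\A_n)$. Lipschitz functions vanishing at $\infty$ are dense in $C_0(X,\R)$ by Stone--Weierstra\ss{} on $\overline{X}$, and elementary tensors span a dense subspace. Such sums have finite $L$, because $L$ of an elementary tensor $\phi a$ with $a\in\A_n$ is bounded by a Lipschitz constant times $\|a\|$ plus finitely many $\be$-terms, as in the construction of \cite{Aguilar19}.

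\emph{Metrizing the weak* topology.} This is the step I expect to be the main obstacle. I would use Rieffel's criterion, which is the standard characterization used in \cite{Aguilar19}: fixing a state $\mu$, the set $\{g\in sa: L(g)\leq 1, |\mu(g)|\leq 1\}$ should be totally bounded in norm. For $\mathcal{B}$ I take $\mu$ to be a state of $C(\overline{X},\A)$ restricted to $\mathcal{B}$ (for instance evaluation at $\infty$ composed with $\tau$). The resulting set is a subset of the corresponding set for $C(\overline{X},\A)$, and that set is totally bounded because $C(\overline{X},\A)$ is a compact quantum metric space. Hence the criterion holds for $\mathcal{B}$. Alternatively, one could argue that every state of $\mathcal{B}$ extends to $C(\overline{X},\A)$ and that $mk$ for the restricted seminorm is dominated appropriately. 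The total boundedness argument avoids this and is the cleaner route.

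Finally, condition (2) of Definition \ref{d:l-triple} must be checked. The subalgebra $\M=C_0(X,\C1_\A)\cong C_0(X)$ is commutative. It also contains an approximate identity for $C_0(X,\A)$: take an approximate identity $(u_\la)$ of $C_0(X)$ consisting of compactly supported $[0,1]$-valued functions equal to $1$ on increasing compact sets. Then $u_\la 1_\A\, f\to f$ uniformly for every $f\in C_0(X,\A)$, since $f$ is small off compact sets. Combining these facts gives that $(C_0(X,\A),L_{\wt{d}}^{\A,\be},C_0(X,\C1_\A))$ is a Lipschitz triple whose unitization is a compact quantum metric space, i.e.\ a quantum Lipschitz triple.
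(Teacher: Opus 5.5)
Your proposal is correct, and it follows a genuinely different route from the paper. The paper's proof asserts that $(f,\al)\mapsto f_\infty+\al 1_{C(\overline{X},\A)}$ is a *-isomorphism onto all of $C(\overline{X},\A)$, so that $C(\overline{X},\A)$ itself is the minimal unitization. It then cites \cite[Theorem 3.10]{Aguilar19} directly, together with an approximate identity $(e_\la 1_\A)$ built from one for $C_0(X)$. That identification holds only when $\A=\C1_\A$. Evaluation at $\infty$ gives $C(\overline{X},\A)/C_0(X,\A)\cong\A$, while the minimal unitization has one-dimensional quotient, and the image of that map is exactly your $\mathcal{B}=\{g: g(\infty)\in\C1_\A\}$.

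Your argument supplies what is needed to make the citation work in general. Because $\mathcal{B}$ is a proper unital C*-subalgebra, density of $\dom{L}\cap sa(\mathcal{B})$ in $sa(\mathcal{B})$ does not follow from density in $sa(C(\overline{X},\A))$. It has to be checked by hand. Your approximants, sums of Lipschitz functions vanishing at $\infty$ times elements of $\bigcup_n\A_n$, do this, at the cost of relying on the explicit form of the seminorm in \cite{Aguilar19}. Metrization of the weak* topology is then inherited through the Ozawa--Rieffel total boundedness criterion, since the Lipschitz ball of $\mathcal{B}$ sits inside the corresponding ball of $C(\overline{X},\A)$.

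The paper's route is a one-line citation, but it rests on an identification that fails for non-scalar $\A$. This includes $\A=M_N(\C)$, the case used later in the paper. Your route is longer but valid for every $\A$ in the statement. It rests on a general principle: a unital C*-subalgebra on which the restricted $L$-seminorm is densely defined is again a compact quantum metric space.

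You also state correctly that $(u_\la 1_\A)$ is an approximate identity for $C_0(X,\A)$. The paper writes ``for $C(\overline{X},\A)$'', which cannot hold, since it would force $e_\la\to 1$ in norm.
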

\begin{proof}
It is a routine exercise to prove that 
\[
(f,\al) \in \wt{C_0(X,\A)} \longmapsto f_\infty +\al 1_{C(\overline{X}, \A)}\in  C(\overline{X}, \A)
\]
where $f_\infty$ agrees with $f$ on $X$ and $f_\infty(\infty)=0$, is a *-isomorphism, and thus $C(\overline{X}, \A)$ is the standard minimal unitization of $C_0(X, \A)$, and $C_0(X, \A)$ can be identified with the following ideal of $C(\overline{X}, \A)$
\[
I_\infty = \{f \in C(\overline{X}, \A) : f(\infty)=0\}.
\] 
Let $(e_\la)_{\la\in \La}$ be an approximate identity for $C_0(X)$. For each $\la\in \La$ and $x\in X$, set 
\[
e^\A_\la(x)=e_\la(x)1_\A. 
\]
It holds that $(e_\la^\A)_{\la\in \La}$ is an approximate identity in $C_0(X, \C1_\A)$ for $C(\overline{X}, \A)$.

Finally, note that  $(C(\overline{X}, \A),  L_{\wt{d}}^{\A, \be})$ is a compact quantum metric space  by \cite[Theorem 3.10]{Aguilar19}, which completes the proof.
\end{proof}
Some examples that satisfy this hypothesis include any half-open interval $(a,b]$ equipped with the absolute value metric since its one-point compactification is just $[a,b]$. Also, one can metrize the one-point compactification  $\overline{\N}=\N\cup \{\infty\}$ of $\N$ with the metric defined for each $n,m\in \overline{\N}$
\[
d(n,m)= 
    \left|\frac{1}{2^{n-1}}-\frac{1}{2^{m-1}}\right|
\]
with the convention that $1/2^{m-1}=0$ if $m=\infty$, and $d$ restricted to $\N$ induces the discrete topology. For an example that doesn't satisfies this hypothesis, consider $(\R, \dd_1)$ where $\dd_1$ is the usual absolute value metric on $\R$. Now, the one-point compactification is metrizable (since it is homeomorphic to the circle), but any metric that extends $\dd_1$ on $\R$ to $\R\cup \{\infty\}$ could not metrize the Alexandroff topology since it is unbounded on $\R$, and of course, any metric that metrizes a compact topology cannot be unbounded (one can generalize this example to any unbounded locally compact metric space for which the one-point compactification is metrizable).  Furthermore, this provides an example that not only does not satisfy the hypothesis of Proposition \ref{p:comm-ex}, but it is not a quantum Lipschitz triple that's still a quantum locally compact metric space (see Example \ref{e:non-qlt-comm}).

Now, we confirm.
\begin{theorem}\label{t:q-l-triple-qlcms}
      Let $\A$ be a non-unital C*-algebra. If  $(\A, L,\M)$ is a quantum Lipschitz triple, then $(\A, L,\M)$ is a quantum locally compact metric space (\cite[Definition 3.1]{Latremoliere12b}).
\end{theorem}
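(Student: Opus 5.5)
The plan is to reduce Latr\'emoli\`ere's condition in \cite[Definition 3.1]{Latremoliere12b} to Rieffel's total boundedness criterion for the compact quantum metric space $(\wt{\A}, L)$ provided by Definition \ref{d:q-l-triple}. I will use the definition in the following form; the main risk is that its exact form differs. A Lipschitz triple $(\A, L, \M)$ is a quantum locally compact metric space if there is a \emph{local state} $\mu$ such that for every pair $a,b$ of compactly supported elements of $\M$ (elements in the topography, i.e. in $C_c$ of the Gelfand spectrum of $\M$), the set
\[
\mathcal{S}_{a,b}=\{a c b : c\in sa(\wt{\A}),\ L(c)\leq 1,\ \wt{\mu}(c)=0\}
\]
is totally bounded in $\|\cdot\|_\A$. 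Condition (1) of Definition \ref{d:l-triple} is already built into a quantum Lipschitz triple, and so is the fact that $\M$ is commutative and contains an approximate identity. So only this total boundedness needs proof.

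First, I fix a local state $\mu$ of $\A$; any state of $\A$ that is local in Latr\'emoli\`ere's sense will do, and such states exist because $\M$ contains an approximate identity. I then extend $\mu$ to the state $\wt{\mu}$ on $\wt{\A}$ of Definition \ref{d:unitization}. Since $(\wt{\A}, L)$ is a compact quantum metric space, Rieffel's characterization \cite{Rieffel98a} shows that
\[
B_\mu=\{c\in sa(\wt{\A}) : L(c)\leq 1,\ \wt{\mu}(c)=0\}
\]
is totally bounded in $\|\cdot\|_{\wt{\A}}$. If needed, I would derive this from $mk_L$ metrizing the weak* topology: the diameter of $S(\wt{\A})$ for $mk_L$ is finite, and one then applies the Arzel\`a--Ascoli-type argument in \cite{Rieffel98a}.

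Second, I fix $a,b$ compactly supported in $\M$ and consider the map $T: c\in \wt{\A}\mapsto acb$. For $c=(c_0,\al)$ we have $acb=ac_0b+\al ab\in \A$, since $\A$ is an ideal in $\wt{\A}$. Moreover $\|acb\|_\A\leq \|a\|_\A\|b\|_\A\|c\|_{\wt{\A}}$, so $T$ is a norm-Lipschitz map. A Lipschitz image of a totally bounded set is totally bounded, so $\mathcal{S}_{a,b}=T(B_\mu)$ is totally bounded in $\A$. If the definition instead uses $(acb)$ together with self-adjoint versions such as $acb+b^*c a^*$, the same estimate applies term by term.

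The main obstacle is bookkeeping rather than analysis. The steps needing care are matching the precise formulation of \cite[Definition 3.1]{Latremoliere12b}: which local state is required, whether the normalization is $\wt{\mu}(c)=0$ or the quotient by scalars, and whether $L$ is evaluated on $sa(\wt{\A})$ or on $sa(\A)$. Should the definition require the condition for \emph{all} local states, I would note that any local state $\nu$ changes $B_\mu$ only by scalar shifts. Indeed, $c-\wt{\nu}(c)1_{\wt{\A}}$ lies in $B_\nu$ whenever $c\in B_\mu$, because $L$ vanishes on $\R1_{\wt{\A}}$. The scalars that appear, $\wt{\nu}(c)$, are bounded by $\sup_{c\in B_\mu}\|c\|_{\wt{\A}}<\infty$, since a totally bounded set is bounded. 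So the shifted family is again totally bounded, and the argument of the second step goes through unchanged.
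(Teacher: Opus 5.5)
Your argument is the paper's: the set $\{c\in sa(\wt{\A}) : L(c)\leq 1,\ \wt{\mu}(c)=0\}$ is totally bounded by the Ozawa--Rieffel criterion for the compact quantum metric space $(\wt{\A},L)$, and two-sided multiplication by fixed $a,b\in\M$ is a bounded map, so it preserves total boundedness. The one adjustment is the concluding citation: the paper concludes via Latr\'emoli\`ere's total-boundedness characterization \cite[Theorem 3.10]{Latremoliere12b} rather than from Definition 3.1 directly, as you suspected you might need to. Also, that criterion applies to every state of $\wt{\A}$, so your shifting argument for other local states is unnecessary.
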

\begin{proof}
    Let $\mu\in S(\wt{A})$.  By \cite[Proposition 1.3]{Ozawa05},  we have that 
    $B=\{x\in sa(\wt{\A}): L(x)\leq 1, \mu(x)=0\}$ is totally bounded since $(\wt{\A}, L)$ is a compact quantum metric space. Next, let $a,b\in \M$. As multiplication is uniformly continuous in each coordinate, we have that $aBb$ is totally bounded. Hence $(\A, L,\M)$ is a quantum locally compact metric space by \cite[Theorem 3.10]{Latremoliere12b}.
\end{proof}
 It is probably no surprise that the converse of the previous theorem is not true, but we still feel it is important to provide a couple examples to make this clear. The first example is a standard commutative example and the second is a standard noncommutative example.
\begin{example}\label{e:non-qlt-comm}
   For the quantum locally compact metric space of Example \ref{e:extended}, we have that  the  associated Monge-Kantorovich metric is not even finite between every pair of states, but every Monge-Kantorovich metric associated to a quantum Lipschitz triple is not just finite on every pair of states but, in fact, the metric is totally bounded (as we will see in Theorem \ref{t:qlt-metric}). We also note that the C*-algebra of this example is even separable, and so, separability is not even enough to provide a converse to the previous theorem, which is of note since the one-point compactification of $\R$ (which is homeomorphic to the circle) is metrizable, and so we see that the condition of Proposition \ref{p:comm-ex}, where we insist that the given metric extends to a metric on the one-point compactification of $X$ and not just that the one-point compactification is metrizable, is of value.
   \end{example}

\begin{example}\label{e:non-qlt-noncomm}
    For a noncommutative example of a quantum locally compact metric space that is not a quantum Lipschitz triple (that's also separable like the last example), the example $(K(\ell^2), L_\al, \mathcal{D})$ of \cite[Proposition 4.7]{Latremoliere12b}, where $K(\ell^2)$ is the compact operators on the infinite-dimensional separable Hilbert space $\ell^2$  and $\mathcal{D}$ is the diagonal of the compact operators, provides such an example. We first list some details of this example in order to prove that it is not a quantum Lipschitz triple. For each $n\in \N$, define
    \[
    P_n((x_m)_{m\in \N})=(x_1, x_2, \ldots, x_n, 0, 0,\ldots)
    \]
    for every $(x_m)_{m\in \N}\in \ell^2$. Let $(\al_n)_{n\in \N}$ be a sequence in $\R$ bounded below by some $\ga>0$. For each $a\in K(\ell^2)$, set
    \[
    L_\al (a)=\sup \{\al_n^{-1}\|P_naP_n\|_\mathrm{op} :n\in \N\},
    \]
    where $\|\cdot\|_\mathrm{op}$ is the operator norm. 
    Let $a\in K(\ell^2)$. Let $n\in \N$, then as projections are norm $1$, we have 
    \[
    \al_n^{-1}\|P_naP_n\|_\mathrm{op}\leq \ga^{-1}\|P_n\|_\mathrm{op}\cdot \|a\|_\mathrm{op}\cdot \|P_n\|_\mathrm{op}=\ga^{-1}\|a\|_\mathrm{op}.
    \]
    Hence
    \[
    L_\al(a)\leq \ga^{-1}\|a\|_\mathrm{op}.
    \]
    Thus $\dom{L_\al}=sa(K(\ell^2))$. Therefore, on the unitization $\wt{K(\ell^2)}$, where $L_\al$ is defined by $L_\al((a,\la))=L_\al(a)$ for every $(a,\la)\in sa(\wt{K(\ell^2)})$, we also have $\dom{L_\al}=sa(\wt{K(\ell^2)})$. However, if $(\wt{K(\ell^2)}, L_\al)$ were a compact quantum metric space, then since $K(\ell^2)$ is infinite dimensional, we must have that $\dom{L_\al}\neq sa(\wt{K(\ell^2)})$ by \cite[Corollary 2.2]{Aguilar-vBH-L}, a contradiction. Thus, $(K(\ell^2), L_\al, \mathcal{D})$ is an example of a quantum locally compact metric space that is not a quantum Lipschitz triple. We also note that since $K(\ell^2)$ has no faithful trace, it doesn't fit into the setting of the rest of the paper since the definition of the density space (Definition \ref{d:density-Bures}) requires a faithful trace, but we still think it was important to provide this example since we are introducing this new concept of quantum Lipschitz triples, which they themselves do not require a faithful trace to be defined.
\end{example}

Now that we have seen that quantum Lipschitz triples are quantum locally compact metric spaces, let's confirm that this condition on Lipschitz triples ensures that our quantum metric on the density space is indeed a metric, and in fact, a  bounded metric, which is desirable for future possible  comparisons with the Bures metric.
\begin{theorem}\label{t:qlt-metric}
     Let $\A$ be a non-unital C*-algebra equipped with a faithful trace $\tau$. If  $(\A, L,\M)$ is a quantum Lipschitz triple, then $d_L^\tau$ of Definition \ref{d:quantum-density-metric} is a metric on $D_\tau(\A)$. That is, $d_L^\tau(x,y)<\infty$ for every $x,y\in D_\tau(\A)$. Moreover, $(D_\tau(\A), d_L^\tau)$ is totally bounded and thus bounded.
\end{theorem}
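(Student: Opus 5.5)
The plan is to transport everything to the unitization $\wt{\A}$, where $(\wt{\A},L)$ is a compact quantum metric space. There $mk_L$ is a genuine metric on $S(\wt{\A})$ that metrizes the weak* topology. Since $S(\wt{\A})$ is weak* compact, $(S(\wt{\A}),mk_L)$ is a compact metric space, hence totally bounded and of finite diameter.

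The first step is to compare the two Monge--Kantorovich quantities. For $x\in D_\tau(\A)$, let $\wt{\vhi^\tau_x}$ denote the unique extension of the state $\vhi^\tau_x$ (Proposition \ref{p:density-state-map}) to a state of $\wt{\A}$, given by $\wt{\vhi^\tau_x}((a,\al))=\tau(xa)+\al$. For $x,y\in D_\tau(\A)$ and any $(a,\al)\in sa(\wt{\A})$ with $L((a,\al))\leq 1$, we have $a\in sa(\A)$, $L(a)=L((a,\al))\leq 1$, and
\[
|\wt{\vhi^\tau_x}((a,\al))-\wt{\vhi^\tau_y}((a,\al))|=|\vhi^\tau_x(a)-\vhi^\tau_y(a)|,
\]
because the $\al$ terms cancel. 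Conversely, every $a\in sa(\A)$ with $L(a)\leq 1$ gives $(a,0)\in sa(\wt{\A})$ with the same $L$-value. The two suprema therefore range over the same set of numbers, so
\[
d_L^\tau(x,y)=mk_L(\vhi^\tau_x,\vhi^\tau_y)=mk_L\big(\wt{\vhi^\tau_x},\wt{\vhi^\tau_y}\big),
\]
where the right-hand side is computed in $(\wt{\A},L)$. The only care needed is the convention $L((a,\la))=L(a)$ from Definition \ref{d:l-pair} and the fact that $sa(\wt{\A})$ consists of pairs $(a,\al)$ with $a=a^*$ and $\al\in\R$.

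Consequently the map $x\mapsto \wt{\vhi^\tau_x}$ is an isometry from $(D_\tau(\A),d_L^\tau)$ into $(S(\wt{\A}),mk_L)$, and it is injective by Proposition \ref{p:density-state-map}. A subset of a compact metric space is totally bounded, and isometric images preserve this. Hence $d_L^\tau$ is finite-valued, and being already an extended metric by Definition \ref{d:quantum-density-metric}, it is a metric. Moreover $(D_\tau(\A),d_L^\tau)$ is totally bounded, and therefore bounded by $\mathrm{diam}(S(\wt{\A}),mk_L)<\infty$.

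The main obstacle is the step asserting that $mk_L$ on $S(\wt{\A})$ is finite-valued and totally bounded. Definition \ref{d:cqms} only says that $mk_L$ metrizes the weak* topology, but that already forces $mk_L$ to be a genuine finite metric whose topology is compact, so $S(\wt{\A})$ is a compact metric space. If one prefers a more concrete bound, one can instead invoke \cite[Proposition 1.3]{Ozawa05}: the set $\{a\in sa(\wt{\A}): L(a)\leq 1,\ \mu(a)=0\}$ is norm bounded, say by $r$. Since the differences of states annihilate $\R1_{\wt{\A}}$, this gives $mk_L(\mu,\nu)\leq 2r$ for all $\mu,\nu\in S(\wt{\A})$.
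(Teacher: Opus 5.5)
Your proposal is correct and takes the same route as the paper. You identify $d_L^\tau(x,y)$ with $mk_L(\wt{\vhi^\tau_x},\wt{\vhi^\tau_y})$ on $S(\wt{\A})$; the paper cites \cite[Remark 2.9]{Latremoliere12b} for this, whereas you verify it directly by observing that the scalar parts cancel because both extensions are states. Then, like the paper, you use that $(S(\wt{\A}),mk_L)$ is a compact metric space, so the image of $D_\tau(\A)$ is totally bounded. Your supplementary bound via \cite[Proposition 1.3]{Ozawa05} just makes explicit the finiteness of $mk_L$, which the paper takes directly from $(\wt{\A},L)$ being a compact quantum metric space.
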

\begin{proof}  Let $x,y\in D_\tau(\A)$. By \cite[Remark 2.9]{Latremoliere12b}, we have that 
    \begin{align*}
    d_L^\tau(x,y)&=mk_L(\varphi^\tau_x, \varphi^\tau_y)\\
    & = \sup \{ |\varphi^\tau_x(c)- \varphi^\tau_y(c)| : c\in sa(\A), L(c)\leq 1\}\\
    &= \sup \{ |\wt{\varphi^\tau_x}(c)- \wt{\varphi^\tau_y}(c)| : c\in sa(\wt{\A}), L(c)\leq 1\}.
\end{align*}
But as $\wt{\varphi^\tau_x}, \wt{\varphi^\tau_y}\in S(\wt{\A})$ by \cite[Proposition 2.1.5(vi)]{Dixmier-c-book}, we have that 
    \begin{align*}
    d_L^\tau(x,y)&=\sup \{ |\wt{\varphi^\tau_x}(c)- \wt{\varphi^\tau_y}(c)| : c\in sa(\wt{\A}), L(c)\leq 1\} 
     =mk_L(\wt{\varphi^\tau_x}, \wt{\varphi^\tau_y})<\infty
    \end{align*}
    since $(\wt{\A}, L)$ is a compact quantum metric space. Moreover, as $(S(\wt{\A}), mk_L)$ is a compact metric space, it is totally bounded, and as any subset of a totally bounded metric space is totally bounded, the proof is complete.
\end{proof}

Before we move on to our example that provides many insights, we provide more evidence that the notion of a quantum Lipschitz triple is a natural definition. Indeed, we will now show that for quantum Lipschitz triples the topology induced by the associated quantum metric is weaker than the topology induced by the C*-norm. This is a natural result since in the unital case, the quantum metric induces the weak* topology on the states, and so, one expects that any topology induced by the quantum metric should be somewhat weak, and in particular, weaker than the C*-norm topology.

\begin{theorem}\label{t:c*-norm-stronger-qm}
    Let $\A$ be a non-unital C*-algebra equipped with a faithful trace $\tau$. If  $(\A, L,\M)$ is a quantum Lipschitz triple, then the topology induced by $d_L^\tau$ of Definition \ref{d:quantum-density-metric} on $D_\tau(\A)$ is weaker than the topology induced by the C*-norm.
\end{theorem}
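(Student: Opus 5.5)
The plan is to reduce the statement to the fact that, for the compact quantum metric space $(\wt{\A}, L)$, the metric $mk_L$ metrizes the weak* topology on $S(\wt{\A})$. We then check that norm convergence of densities produces weak* (in fact norm) convergence of the associated extended states. Both topologies on $D_\tau(\A)$ are metric, so it suffices to work with sequences. Fix a sequence $(x_n)_{n\in\N}$ in $D_\tau(\A)$ and $x\in D_\tau(\A)$ with $\|x_n-x\|_\A\to 0$. The goal is to show $d_L^\tau(x_n,x)\to 0$.

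First, by the computation in the proof of Theorem \ref{t:qlt-metric}, we have
\[
d_L^\tau(x_n,x)=mk_L\left(\wt{\varphi^\tau_{x_n}},\wt{\varphi^\tau_x}\right),
\]
where $\wt{\varphi^\tau_{x_n}},\wt{\varphi^\tau_x}\in S(\wt{\A})$ are the unique extensions of the states from Proposition \ref{p:density-state-map}. Second, we estimate these extended states on an arbitrary $(c,\al)\in\wt{\A}$. Since all of them are extensions of states, Definition \ref{d:unitization} gives
\[
\wt{\varphi^\tau_{x_n}}((c,\al))-\wt{\varphi^\tau_x}((c,\al))=\tau(x_nc)+\al-\tau(xc)-\al=\tau((x_n-x)c).
\]
Continuity of the positive functional $\tau$ then yields
\[
\left|\tau((x_n-x)c)\right|\leq\|\tau\|_\mathrm{op}\|x_n-x\|_\A\|c\|_\A .
\]
The right side tends to $0$ for every fixed $(c,\al)$. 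Hence $\wt{\varphi^\tau_{x_n}}\to\wt{\varphi^\tau_x}$ in the weak* topology of $S(\wt{\A})$; the convergence is even uniform on bounded sets, because $\|c\|_\A\leq 2\|(c,\al)\|_{\wt{\A}}$ holds in the unitization.

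Third, since $(\A,L,\M)$ is a quantum Lipschitz triple, $(\wt{\A},L)$ is a compact quantum metric space. By Definition \ref{d:cqms}, $mk_L$ metrizes the weak* topology on $S(\wt{\A})$. Therefore $mk_L(\wt{\varphi^\tau_{x_n}},\wt{\varphi^\tau_x})\to 0$, so $d_L^\tau(x_n,x)\to 0$. This shows that the identity map $(D_\tau(\A),d_\A)\to(D_\tau(\A),d_L^\tau)$ is continuous, that is, the $d_L^\tau$-topology is weaker.

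The only delicate point is the passage to the unitization. One must make sure the extended states are indeed states on $\wt{\A}$, via \cite[Proposition 2.1.5(vi)]{Dixmier-c-book}, and that their $\al$-terms cancel, which relies on $\|\varphi^\tau_a\|_\mathrm{op}=1$ from Proposition \ref{p:density-state-map}. Once these are in place, the compactness hypothesis does all the work, and no estimate on $L$ itself is needed.
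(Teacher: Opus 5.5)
Your argument is correct, but it takes a different route from the paper's. The paper does not use the weak*-metrization axiom directly. Instead it takes the state $\mu((b,\la))=\la$ on $\wt{\A}$ and applies \cite[Proposition 1.3]{Ozawa05} to see that $\{a\in sa(\wt{\A}) : L(a)\leq 1,\ \mu(a)=0\}$ is totally bounded, hence norm-bounded by some $K>0$. It notes that the elements of this set have the form $(b,0)$, and then restricts the supremum defining $mk_L$ to this set via \cite[Proposition 2.7]{Latremoliere12b}. This yields the explicit estimate $d_L^\tau(x,y)\leq K\|\tau\|_\mathrm{op}\|x-y\|_\A$.

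Your proof is softer and more self-contained. It needs only Definition \ref{d:cqms}, the formula $\wt{\vhi^\tau_x}((c,\al))=\tau(xc)+\al$, and the fact that continuity between metric spaces can be tested on sequences. The price is that you obtain only continuity of the identity map $(D_\tau(\A),d_\A)\to(D_\tau(\A),d_L^\tau)$. The paper shows this map is Lipschitz, hence uniformly continuous. That is worth knowing in view of Section \ref{s:quantized}: it shows the failure of uniform equivalence between $d_\A$ and $d_L^\tau$ proved there can only come from the other direction.

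Your side remark that $\|\wt{\vhi^\tau_{x_n}}-\wt{\vhi^\tau_x}\|_\mathrm{op}\leq 2\|\tau\|_\mathrm{op}\|x_n-x\|_\A$ could also give a Lipschitz bound. It would first have to be combined with the norm-boundedness of the $L$-unit ball modulo scalars, which is precisely the ingredient the paper imports from \cite{Ozawa05}.
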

\begin{proof}
    Consider the state $\mu \in S(\wt{\A})$ defined by $\mu((b,\la))=\la$ for every $(b,\la)\in \wt{\A}$. Since $(\wt{A}, L)$ is a compact quantum metric space, we have that
    \[
    B=\{a \in sa(\wt{\A}) : L(a)\leq 1, \mu(a)=0\}
    \]
    is totally bounded by \cite[Proposition 1.3]{Ozawa05}, and thus bounded by some $K>0$. 

    Let $x,y\in D_\tau(\A)$. Let $(b,\la)\in B$ where $b\in \A$. Note that as $\mu((b,\la))=0$, We have that $\la=0$. Hence   
    \begin{align*}
        \left|\wt{\vhi_x^\tau}\left((b,0)\right)-\wt{\vhi_y^\tau}\left((b,0)\right) \right|&  = \left| \vhi_x^\tau(b)  - \vhi_y^\tau(b) \right| \\
        & = |\tau(xb)-\tau(yb)|\\
        & = |\tau((x-y)b)|\\
        & \leq \|\tau\|_\mathrm{op}\cdot \|(x-y)b\|_\A\\
        & \leq \|\tau\|_\mathrm{op}\cdot \|x-y\|_\A\cdot \|b\|_\A\\
        & \leq K\|\tau\|_\mathrm{op} \cdot \|x-y\|_\A.
    \end{align*}
    Thus, by \cite[Proposition 2.7]{Latremoliere12b},
    \[
    d_L^\tau(x,y)\leq K\|\tau\|_\mathrm{op} \cdot \|x-y\|_\A,
    \]
    which completes the proof.
\end{proof}
\begin{remark}
    We note that this previous result is also true in the unital case, but this was not mentioned in \cite{Aguilar25}. However, the proof is similar and shorter than the above proof for the unital case.
\end{remark}

 \subsection{Matrix-valued  functions on the quantized interval}\label{s:quantized}

For the remainder of this section, we focus our attention on one family of quantum Lipschitz triples. The C*-algebras of this section are a subfamily of the C*-algebras of Proposition \ref{p:comm-ex} including many noncommutative spaces, but the quantum metric structure is different in that we do not utilize the  the usual Lipschitz constant. This choice of using a different construction of an $L$-seminorm is not only because it provides the desired results (a topology that is not compact on the density space and a metric that is not uniformly equivalent to the Bures metric or the C*-norm induced metric), but also because we hope that it provides an example that could lead to future results   as the $L$-seminorm is still a natural construction but one that is not directly constructed from classical structure like in the case of the Lipschitz constant.

Let $N\in \N$. The C*-algebras we study in this section are
\[
C_0(\N, M_N(\C))=\{f:\N \rightarrow M_N(\C) \mid \lim_{n\to \infty}f(n)=0\},
\]
where the limit is being taken in the operator norm  $\|\cdot\|_\mathrm{op}$ on matrices, 
and note that for $N=1$, we have $C_0(\N, M_1(\C))=C_0(\N)$. 
The reason we call this section, "matrix-valued functions on the quantized interval," is because the one-point point compactification of $\N$ with respect to the discrete topology on $\N$, denoted $\overline{\N}$, is homeomorphic to the quantized interval
\[
\left\{ \frac{1}{2^{n-1}}\in \R : n\in \N\right\}\cup \{0\},
\]
which is equipped with the subspace topology induced from the usual topology on $\R$,  
but we choose to represent $\overline{\N}$ as simply $\N\cup \{\infty\}$ to match with Example \ref{e:quantized} to avoid introducing more notation than necessary.  

For notational purposes, for any locally compact Hausdorff space $X$, for each $f\in C_0(X, M_N(\C))$ and $j,k\in \{1, 2, \ldots, N\}$, we let $f_{j,k}: X\rightarrow \C$ denote the function defined for every $x\in \N$ by 
\[
f_{j,k}(x)=f(x)_{j,k}
\]
where $f(x)_{j,k}$ denotes the $(j,k)$-entry of $f(x)\in M_N(\C)$. We note that $f_{j,k}\in C_0(X)$ and we will sometimes denote
\[
f=(f_{j,k})_{j,k\in \{1,2,\ldots, N\}}.
\]

The faithful trace we will be using to define the density space in $C_0(\N, M_N(\C))$ is  formed by $\omega$ from Example \ref{e:quantized} and the trace $\frac{1}{N}\mathrm{Tr}$ and Proposition \ref{p:faithful-extend}, which we denote by $\om^N$. In particular,  a standard measure theory exercise shows that 
\[
\omega(f)=\sum_{n=1}^\infty f(n)\frac{1}{2^n}
\]
for every $f\in C_0(\N)$, and thus, for every $f\in C_0(\N, M_N(\C)$, we have that 
\[
\om^N(f)=\sum_{n=1}^\infty \frac{1}{N}\mathrm{Tr}(f(n))\frac{1}{2^n}=\sum_{n=1}^\infty \left(\sum_{k=1}^N\frac{1}{N}f_{k,k}(n)\frac{1}{2^n}\right).
\] 
We note that for $N=1$, we have that $\om^N=\om$.

Towards defining the $L$-seminorm on the unitization, we note that the standard minimal unitization of $C_0(\N, M_N(\C))$ can be identified with 
\[
C(\overline{\N}, M_N(\C))=\{f:\overline{\N}\rightarrow M_N(\C) \mid \lim_{n\to \infty}f(n)=f(\infty)\}
\]
(see the proof of Proposition  \ref{p:comm-ex}), 
where $\overline{\N}=\N\cup \{\infty\}$ is the one-point compactification of $\N$ and the limit again is with respect to convergence in the operator norm on $M_N(\C)$, which is equivalent to coordinate-wise convergence. Note that  $C_0(\N, M_N(\C))$ can be identified by the ideal of $C(\overline{\N}, M_N(\C))$ defined by 
\[
I_\infty=\{f\in C(\overline{\N}, M_N(\C)) : f(\infty)=0\},
\]
and 
\[
C(\overline{\N}, M_N(\C))=I_\infty \oplus \C1_{C(\overline{\N}, M_N(\C))},
\]
where $1_{C(\overline{\N}, M_N(\C))}$ is the constant $I_N$ (identity matrix) function, since for each $f\in C(\overline{\N}, M_N(\C))$, we have that $f$ uniquely can be written as \[f=(f-f(\infty)1_{C(\overline{\N}, M_N(\C))})+f(\infty)1_{C(\overline{\N}, M_N(\C))},\] where $(f-f(\infty)1_{C(\overline{\N}, M_N(\C))})\in I_\infty$ and $f(\infty)1_{C(\overline{\N}, M_N(\C))}\in \C1_{C(\overline{\N}, M_N(\C))}$. 

To define the $L$-seminorm on $C(\overline{\N}, M_N(\C))$,  we will use \cite[Theorem 3.5]{Aguilar-Latremoliere15}.    First, for the faithful tracial state, as an abuse of notation, we define
\[
\omega^N(f)=\sum_{n=1}^\infty \frac{1}{N}\mathrm{Tr}(f(n))\frac{1}{2^n}=\sum_{n=1}^\infty \left(\sum_{k=1}^N\frac{1}{N}f_{k,k}(n)\frac{1}{2^n}\right)
\]
for every $f\in C(\overline{\N}, M_N(\C))$, which is the unique extension of the above faithful trace $\omega^N$ on $C_0(\N, M_N(\C))$ to its unitization, $C(\overline{\N}, M_N(\C))$. The faithful state $\omega^N$ induces an inner product on $C(\overline{\N}, M_N(\C))$ defined for each $f,g\in C(\overline{\N}, M_N(\C))$ by
\[
\langle f,g\rangle_{\omega^N} =\omega^N(g^*f)=\sum_{n=1}^\infty \frac{1}{N}\mathrm{Tr}(g(n)^*f(n))\frac{1}{2^n}.
\]
 
For each $n\in \N$, define
\[
C_n^N=\{f\in C(\overline{\N}, M_N(\C)) : \forall k \geq n, f(k)=f(n)\},
\]
and  
\[
C_0^N=\{f\in C(\overline{\N}, M_N(\C)) : \exists \al\in \C, \forall  n\in \overline{\N}, f(n)=\al I_N\}=\C1_{C(\overline{\N}, M_N(\C))}.
\]
For all $n\in \N_0=\N\cup \{0\}$, note that $C_n^N$ is a finite-dimensional unital C*-subalgebra  of $C(\overline{\N}, M_N(\C))$. When $N=1$, we denote $C^1_n=C_n$ for every $n\in \N_0$, and note that $C_0=C_1$, but we keep this redundancy  in the $C(\overline{\N})$ case for notational purposes.

Next,
\[
C(\overline{\N}, M_N(\C))=\overline{\bigcup_{n=0}^\infty C_n^N}^{\|\cdot\|_\infty}
\]
by a Stone-Weierstrass argument. 
Let $n\in \N_0$. Let
\[
E^N_n : C(\overline{\N}, M_N(\C)) \rightarrow C^N_n
\]
be the orthogonal projection of $ C(\overline{\N}, M_N(\C)) $ onto $C_n^N$ with respect to the inner product $\langle \cdot, \cdot \rangle_{\omega^N} $, which is also the unique $\om^N$ preserving conditional expectation onto $C_n^N$ by \cite[Theorem 3.5 and Expression (4.1)]{Aguilar-Latremoliere15}. We note that 
\[
E^N_0(f)=\omega^N(f)1_{C(\overline{\N}, M_N(\C))}
\]  
for every $f\in C(\overline{\N}, M_N(\C))$. When $N=1$, we denote $E^1_n=E_n$ for every $n\in \N_0$ and note that $E_0=E_1$ as $C_0=C_1$.

Finally, let $(\beta(n))_{n\in \N}$ be a decreasing sequence of positive reals that converge to $0$ with respect to the usual topology on $\R$ such that $\be(0)=\be(1)$ (this is to avoid any issues in the $N=1$ case since  $C_0=C_1$). For each $f\in C(\overline{\N})$, define
\[
L^N_\beta(f)=\sup \left\{\frac{\|f-E^N_n(f)\|_\infty}{\beta(n)}: n\in \N_0 \right\}.
\]
We first confirm that we have a quantum Lipschitz triple.
\begin{proposition}
It holds that $(C_0(\N, M_N(\C)), L^N_\beta, C_0(\N, \C I_N))$ is a quantum Lipschitz triple.    Therefore, $d_{L^N_\be}^{\om^N}$ of Definition \ref{d:quantum-density-metric} is a totally bounded metric on the density space $D_{\om^N}(C_0(\N, M_N(\C)))$. 
\end{proposition}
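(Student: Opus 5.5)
The proof splits into three checks, matching Definition \ref{d:q-l-triple} and Definition \ref{d:l-triple}. First, the unitization of $C_0(\N, M_N(\C))$ is identified with $C(\overline{\N}, M_N(\C))$, exactly as in the proof of Proposition \ref{p:comm-ex}. Under this identification we must verify three things. We need that $L^N_\be$ is an $L$-seminorm making $(C(\overline{\N}, M_N(\C)), L^N_\be)$ a compact quantum metric space; this in particular gives the Lipschitz pair condition. We need that $C_0(\N,\C I_N)$ is commutative. Finally, we need that $C_0(\N,\C I_N)$ contains an approximate identity for $C_0(\N, M_N(\C))$.

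For the compact quantum metric space property, I will invoke \cite[Theorem 3.5]{Aguilar-Latremoliere15}, which is the result the construction was designed around. Its hypotheses must be checked in turn.
\begin{enumerate}
\item $C(\overline{\N}, M_N(\C))$ must be a unital AF algebra given as the closure of the increasing sequence $(C_n^N)_{n\in\N_0}$ of finite-dimensional unital C*-subalgebras, with $C_0^N=\C1$.
\item $\om^N$ must be a faithful tracial state on it.
\item $\be$ must be a positive sequence converging to $0$.
\end{enumerate}
Nestedness $C_n^N\subseteq C_{n+1}^N$ is immediate from the definition: if $f(k)=f(n)$ for all $k\ge n$, then also $f(k)=f(n+1)$ for all $k\ge n+1$. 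Density of the union is the Stone--Weierstrass remark already made. To make the density explicit, for $f$ continuous on $\overline{\N}$ one can use the truncation $f^{(n)}$, which agrees with $f$ on $\{1,\dots,n\}$ and equals $f(n)$ afterward. Then $\|f-f^{(n)}\|_\infty=\sup_{k\ge n}\|f(k)-f(n)\|_{\mathrm{op}}\to 0$, because $(f(k))_k$ is Cauchy. Faithfulness and traciality of $\om^N$ follow from Proposition \ref{p:faithful-extend} together with Proposition \ref{p:faithful}; directly, every point of $\N$ has positive $\om$-mass, and $\infty$ is a limit of such points, so a positive $f$ with $\om^N(f)=0$ vanishes on $\N$ and hence at $\infty$. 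The theorem then gives that $L^N_\be$ is a lower semicontinuous $L$-seminorm with dense domain and kernel $\R1$, and that $mk_{L^N_\be}$ metrizes the weak* topology.

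For the approximate identity, set $e_m=\chi_{\{1,\dots,m\}}I_N\in C_0(\N,\C I_N)$. This family is commutative. It is an approximate identity because, for $f\in C_0(\N,M_N(\C))$, we have $\|f-e_mf\|_\infty=\sup_{k>m}\|f(k)\|_{\mathrm{op}}\to0$, and the same holds on the other side. This gives the Lipschitz triple, hence a quantum Lipschitz triple. The final sentence of the statement is then exactly Theorem \ref{t:qlt-metric}, applied with the faithful trace $\om^N$.

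The main obstacle is purely the bookkeeping in matching hypotheses. One must check that \cite[Theorem 3.5]{Aguilar-Latremoliere15} allows the indexing starting at $C_0^N=\C1$. In the case $N=1$ there is a repetition $C_0=C_1$, which is why $\be(0)=\be(1)$ was imposed. One must also check that the extension of $L^N_\be$ to $(a,\la)$ agrees with the convention $L((a,\la))=L(a)$ of Definition \ref{d:l-pair}. This last point holds because $E_n^N$ fixes $1$, so $L^N_\be(f+\la1)=L^N_\be(f)$. I would verify this invariance explicitly and otherwise cite the theorem.
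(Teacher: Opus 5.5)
Your plan follows the paper's proof: unitize, cite \cite[Theorem 3.5]{Aguilar-Latremoliere15}, exhibit an approximate identity inside $C_0(\N,\C I_N)$, and invoke Theorem \ref{t:qlt-metric}. Your individual checks are correct: nestedness and density of the $C_n^N$, faithfulness of $\om^N$ on $C(\overline{\N},M_N(\C))$, the approximate identity $\chi_{\{1,\dots,m\}}I_N$, and $L^N_\be(f+\la 1)=L^N_\be(f)$.

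The step that fails for $N\geq 2$ is the first one, which you take from the proof of Proposition \ref{p:comm-ex}; the paper's own proof relies on the same identification. The minimal unitization $\wt{\A}=\A\oplus\C$ of $\A=C_0(\N,M_N(\C))$ is not $C(\overline{\N},M_N(\C))$. The map $(g,\al)\mapsto g_\infty+\al 1$ is an injective $*$-homomorphism, but its image is only $\mathcal{B}=\{f\in C(\overline{\N},M_N(\C)) : f(\infty)\in\C I_N\}$. The decomposition $f=(f-f(\infty)1)+f(\infty)1$ lands in $I_\infty\oplus\C1$ only when $f(\infty)$ is scalar. Abstractly, $\wt{\A}$ has the character $(a,\al)\mapsto\al$, while $C(\overline{\N},M_N(\C))$ contains a unital copy of $M_N(\C)$ and so has no characters. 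Definition \ref{d:q-l-triple} requires $(\wt{\A},L)$ to be a compact quantum metric space. Knowing this for $(C(\overline{\N},M_N(\C)),L^N_\be)$ is therefore not yet what is asked. Likewise, Proposition \ref{p:faithful} concerns $\mathcal{B}$, not $C(\overline{\N},M_N(\C))$, though your direct faithfulness argument does not need it.

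The gap is easy to close by restricting $L^N_\be$ to $sa(\mathcal{B})$.
\begin{enumerate}
\item The invariance you checked shows this restriction is the extension required by Definition \ref{d:l-pair}.
\item Its kernel is $\R1\cap\mathcal{B}=\R1$.
\item Its domain is dense in $sa(\mathcal{B})$, but not via your truncation $f^{(n)}$, whose value $f(n)$ at $\infty$ need not be scalar. Instead, for $f\in sa(\mathcal{B})$ with $f(\infty)=\al I_N$, let $g_n$ agree with $f$ on $\{1,\dots,n-1\}$ and equal $\al I_N$ on $\{n,n+1,\dots\}\cup\{\infty\}$. Then $g_n\in C_n^N\cap\mathcal{B}$, so $E^N_k(g_n)=g_n$ for $k\geq n$ and $L^N_\be(g_n)<\infty$. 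Also $\|f-g_n\|_\infty=\sup_{k\geq n}\|f(k)-\al I_N\|_{\mathrm{op}}\to 0$.
\item The set $\{b\in sa(\mathcal{B}) : L^N_\be(b)\leq 1,\ \om^N(b)=0\}$ sits inside the corresponding subset of $sa(C(\overline{\N},M_N(\C)))$. That subset is totally bounded by \cite[Proposition 1.3]{Ozawa05}, because \cite[Theorem 3.5]{Aguilar-Latremoliere15} applies there. The same criterion then shows that $(\mathcal{B},L^N_\be)$ is a compact quantum metric space.
\end{enumerate}
After this, your approximate-identity argument and the appeal to Theorem \ref{t:qlt-metric} go through unchanged. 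For $N=1$ your proof is fine as written, since then $\mathcal{B}=C(\overline{\N})$.
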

\begin{proof}
     First,   $(C(\overline{\N}, M_N(\C)), L_\be^N)$ is a compact quantum metric space by \cite[Theorem 3.5]{Aguilar-Latremoliere15}. Moreover, $C_0(\N, \C I_N) $ contains an approximate identity for $C_0(\N, M_N(\C))$ (see the argument in Proposition \ref{p:comm-ex}). The last statement in the proposition is provided  by Theorem \ref{t:qlt-metric}.
\end{proof}
As mentioned above, we would like to be able to extend some results from \cite{Aguilar-vBH-L} on $C(\overline{\N})$ to $C(\overline{\N}, M_N(\C))$. To do this, we will relate $L_\be$ and $L_\be^N$ in the following result.
\begin{proposition}\label{p:amplified-cond-exp}
    For each  $n\in \N_0$ and $f\in C(\overline{\N}, M_N(\C))$, it holds that
    \[
    E_n^N(f)=(E_n(f_{j,k}))_{j,k\in \{1,2,\ldots, N\}}.
    \]
    Moreover, for every $f\in C(\overline{\N}, M_N(\C)) $ and $j,k\in \{1,2,\ldots, N\}$, we have 
    \[
    L_\be(f_{j,k})\leq L_\be^N(f).
    \]
\end{proposition}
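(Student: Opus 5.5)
The plan is to prove the entrywise formula for $E_n^N$ by the characterization of $E_n^N$ as the orthogonal projection onto $C_n^N$ with respect to $\langle\cdot,\cdot\rangle_{\om^N}$. Then the Lipschitz inequality follows from the fact that the sup norm of a matrix-valued function dominates the sup norm of each entry.

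First, fix $n\in\N_0$ and $f\in C(\overline{\N},M_N(\C))$, and set $g=(E_n(f_{j,k}))_{j,k}$. Since each $E_n(f_{j,k})\in C_n$, the function $g$ is constant from $n$ on. For $n=0$ we have $C_0=C_1$ in the scalar case, so $E_0(f_{j,k})=\om(f_{j,k})1$. This mismatch needs care: $C_0^N=\C 1$, whereas $(E_0(f_{j,k}))_{j,k}$ is a constant function with value the matrix $(\om(f_{j,k}))_{j,k}$, which need not be scalar. So I would treat $n\in\N$ via the orthogonality argument below, and handle $n=0$ separately, reading the formula there with the convention of the statement, or else checking whether the intended meaning is the $C_1^N$ projection. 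I expect this bookkeeping at $n=0$ to be the main obstacle. A natural fix is to prove the formula for $n\geq1$, and for $n=0$ to use $\|f-E_0^N f\|_\infty$ together with $\be(0)=\be(1)$ in the second part. For the main case $n\ge1$, we get $g\in C_n^N$, and it remains to show $f-g\perp C_n^N$. Let $h\in C_n^N$. Then
\[
\langle f-g,h\rangle_{\om^N}=\frac1N\sum_{j,k}\sum_m \overline{h_{j,k}(m)}\,(f_{j,k}-E_n(f_{j,k}))(m)2^{-m}=\frac1N\sum_{j,k}\langle f_{j,k}-E_n(f_{j,k}),h_{j,k}\rangle_\om ,
\]
using $\mathrm{Tr}(h^*u)=\sum_{j,k}\overline{h_{j,k}}u_{j,k}$. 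Each $h_{j,k}\in C_n$, so each term vanishes because $E_n$ is the orthogonal projection onto $C_n$. Uniqueness of orthogonal projection then gives $E_n^N(f)=g$.

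For the inequality, for every $m\in\overline{\N}$ the entry bound $|A_{j,k}|\le\|A\|_{\mathrm{op}}$ gives
\[
\|f_{j,k}-E_n(f_{j,k})\|_\infty=\sup_m|(f-E_n^N f)(m)_{j,k}|\le \|f-E_n^N(f)\|_\infty .
\]
Dividing by $\be(n)$ and taking the supremum over $n$ yields $L_\be(f_{j,k})\le L_\be^N(f)$. At $n=0$ I would instead compare the scalar term, which equals the $n=1$ term since $E_0=E_1$ and $\be(0)=\be(1)$, with the $n=1$ term of $L_\be^N$; this avoids the mismatch entirely.
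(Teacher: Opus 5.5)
Your proof is correct, and for $n\in\N$ it reaches the identity by a more elementary route than the paper. The paper sets $F_n^N(f)=(E_n(f_{j,k}))_{j,k}$ and uses Tomiyama's theorem: $E_n$ is completely positive, so its amplification is positive, unital, contractive and idempotent onto $C_n^N$, hence a conditional expectation. It then checks that $F_n^N$ preserves $\om^N$ and concludes from the uniqueness of the $\om^N$-preserving conditional expectation onto $C_n^N$. You instead verify the defining property of $E_n^N$ directly. The identity $\mathrm{Tr}(h^*u)=\sum_{j,k}\overline{h_{j,k}}u_{j,k}$ splits $\langle\cdot,\cdot\rangle_{\om^N}$ into $\frac1N\sum_{j,k}\langle\cdot,\cdot\rangle_\om$ entrywise, so $f-g\perp C_n^N$ follows from the scalar case. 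This needs neither complete positivity nor the uniqueness theorem. The paper's route also shows that the amplification is a conditional expectation, while yours is self-contained. Both arguments rest on the fact $C_n^N=M_N(C_n)$.

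That fact fails at $n=0$ when $N\geq 2$, and your suspicion there is justified: the first identity is false at $n=0$. Let $u$ be the matrix unit with a $1$ in the $(1,2)$-entry and let $f$ be the constant function with value $u$. Then $E_0^N(f)=\om^N(f)1=0$, while $(E_0(f_{j,k}))_{j,k}$ is the constant function $u$.

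The paper's proof has exactly this gap: $F_0^N$ projects onto $C_1^N$, not onto $C_0^N=\C 1$. Even the per-index bound $\|f_{j,k}-E_0(f_{j,k})\|_\infty\leq\|f-E_0^N(f)\|_\infty$ that the paper uses can fail. Take $N=2$, let $h(1)=1$, $h(2)=-1$, and $h(m)=1$ for $m\geq 3$ and for $m=\infty$, and put $f=h\,(u+u^*)$. Then $\om^2(f)=0$, so the right side is $1$. But $\om(h)=\tfrac12$, so the left side is $\tfrac32$ for $(j,k)=(1,2)$.

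So state the identity only for $n\in\N$ rather than hedging. Your treatment of the $n=0$ term of $L_\be(f_{j,k})$ is genuinely needed and correct. That term equals the $n=1$ term because $E_0=E_1$ and $\be(0)=\be(1)$, and the $n=1$ term is bounded by the $n=1$ term of $L_\be^N(f)$. This gives $L_\be(f_{j,k})\leq L_\be^N(f)$, which is all the paper uses later, in Lemma \ref{l:qm-calc}.
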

\begin{proof}
    Let $n\in \N_0$. For every $f\in C(\overline{\N}, M_N(\C)) $, define
    \[
    F^N_n(f)=(E_n(f_{j,k}))_{j,k\in \{1,2,\ldots, N\}}.
    \]
    As conditional expectations are completely positive by \cite[Tomiyama's Theorem]{Brown-Ozawa}, we have that the linear map $F^N_n$ is positive since $F_n^N$ is an amplification of $E_n$. Therefore, it achieves its norm on the identity and as $F_n^N$ is unital by construction, we have that $F^N_n$ is contractive. Furthermore, $F^N_n$ is a projection onto $C_n^N$ as  $E_n$ projects onto $C_n$. Therefore, $F_n^N$ is a conditional expectation onto $C_n^N$ by \cite[Tomiyama's Theorem]{Brown-Ozawa}. 

    Now, as $E_n^N$ is the unique $\om^N$ preserving conditional expectation onto $C_n^N$ by \cite[Theorem 3.5]{Aguilar-Latremoliere15}, all that remains to complete the proof that $F^N_n=E_n^N$ is to show that $F_n^N$ preserves $\om^N$. Thus let $f\in C(\overline{\N}, M_N(\C))$. We have since $E_n$ is $\om$ preserving
    \begin{align*}
        \om^N(F_n^N(f))& = \om^N((E_n(f_{j,k}))_{j,k\in \{1,2,\ldots, N\}})\\
        & = \sum_{m=1}^\infty\left( \sum_{k=1}^N \frac{1}{N}E_n(f_{k,k})(m)\frac{1}{2^m}\right)\\
        & = \frac{1}{N}\sum_{k=1}^N \sum_{m=1}^\infty E_n(f_{k,k})(m)\frac{1}{2^m}\\
        & = \frac{1}{N}\sum_{k=1}^N \om(E_n(f_{k,k}))\\
        & = \frac{1}{N}\sum_{k=1}^N \om(f_{k,k})\\
        & = \frac{1}{N}\sum_{k=1}^N \sum_{m=1}^\infty f_{k,k}(m)\frac{1}{2^m}\\
        & = \om^N(f).
    \end{align*}
      Thus $F^N_n=E_n^N$.

Next, let $f\in C(\overline{\N}, M_N(\C))$.  Let  $x\in \overline{\N}.$ Fix $j,k\in \{1,2,\ldots, N\}$. As the max norm on $M_N(\C)$ is dominated by the operator norm by the table on page 365 of \cite{Horn}, we have that
\begin{align*}
    \left|f_{j,k}(x)-E_n(f_{j,k})(x)\right| & \leq \left\|f(x)-(E_n(f_{j,k}))_{j,k\in \{1,2,\ldots, N\}}(x)\right\|_{M_N(\C)}\\
    &= \left\|f(x)-F_n^N(f)(x)\right\|_{M_N(\C)}\\
    & = \|f(x)-E_n^N(f)(x)\|_{M_N(\C)}\\
   &  \leq \|f-E_n^N(f)\|_\infty
\end{align*}
Thus $   \|f_{j,k} -E_n(f_{j,k})\|_\infty \leq    \|f-E_n^N(f)\|_\infty $, which implies that
\[
L_\be(f)\leq L_\be^N(f)
\]
as desired.
\end{proof}

Next, the sequence $(a_n)_{n\in \N}$ of Example \ref{e:quantized} will play a pivotal role in the remaining results of this article: the non-uniform equivalence with the Bures metric and C*-norm induced metric, and the non-compactness of the quantum metric topology. But first, we need to adapt this sequence to the matrix setting and  establish some results about this matrix version of this  sequence.

For every $n\in \N$ and $x\in \N$, set 
\[
a_n^N(x)=a_n(x)I_N=2^n\chi_n(x)I_N,
\]
and note that $a^1_n=a_n$. And we note that $(a_n^N)_{n\in \N}$ is a sequence in $D_{\om^N}(C_0(\N, M_N(\C)))$ by Example \ref{e:quantized-N}.

\begin{lemma}\label{l:dirac}
    For every $n\in \N$, it holds that 
    \[
    \vhi^{\om^N}_{a^N_n}(f)=\frac{1}{N}\mathrm{Tr}(f(n))
    \]
for every $f\in C_0(\N , M_N(\C))$, where  $\vhi^{\om^N}_{a^N_n}$ is from Definition \ref{d:quantum-density-metric}. In particular, when $N=1$, we have 
    \[
    \vhi^\om_{a_n}=\de_n,
    \]
    where   $\de_n$ is the Dirac point mass at $n$ (i.e. $\de_n(f)=f(n)$ for every $f\in C_0(\N)$).
\end{lemma}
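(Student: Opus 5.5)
This is a direct computation from the definitions. By Proposition \ref{p:density-state-map}, $\vhi^{\om^N}_{a^N_n}(f)=\om^N(a^N_n f)$ for every $f\in C_0(\N,M_N(\C))$, so the plan is to compute the pointwise product $a^N_n f$ and then apply the series formula for $\om^N$ recorded above.

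First, fix $n\in\N$ and $f\in C_0(\N,M_N(\C))$. For each $x\in\N$,
\[
(a^N_n f)(x)=2^n\chi_n(x)I_N f(x)=2^n\chi_n(x)f(x).
\]
This is $2^n f(n)$ when $x=n$ and $0$ otherwise.

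Second, substitute into
\[
\om^N(g)=\sum_{m=1}^\infty \frac{1}{N}\mathrm{Tr}(g(m))\frac{1}{2^m}.
\]
Only the $m=n$ term survives, giving
\[
\om^N(a^N_n f)=\frac{1}{N}\mathrm{Tr}(2^n f(n))\frac{1}{2^n}=\frac{1}{N}\mathrm{Tr}(f(n)),
\]
by linearity of the trace. This is the claimed identity.

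Finally, for $N=1$ we have $a^1_n=a_n$, $\om^1=\om$ and $\frac{1}{1}\mathrm{Tr}$ is the identity on $M_1(\C)=\C$. The formula therefore reads $\vhi^{\om}_{a_n}(f)=f(n)=\de_n(f)$ for all $f\in C_0(\N)$, so $\vhi^\om_{a_n}=\de_n$.

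There is no real obstacle. The only point needing care is that the series formula for $\om^N$, stated earlier as a consequence of Proposition \ref{p:faithful-extend} together with the formula $\om(f)=\sum_n f(n)2^{-n}$, applies to the product $a^N_n f$. It does, because $a^N_n f\in C_0(\N,M_N(\C))$: it has finite support.
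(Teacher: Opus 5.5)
Your proposal is correct and follows essentially the same route as the paper: write $\vhi^{\om^N}_{a^N_n}(f)=\om^N(a^N_n f)$, expand $\om^N$ as its series, and note that only the $m=n$ term survives, giving $\frac{1}{N}\mathrm{Tr}(f(n))$. Your specialization to $N=1$ also matches the paper's.
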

\begin{proof}
   Let $n\in \N$.  Let $f\in C_0(\N, M_N(\C))$. It holds that 
   \begin{align*}
    \vhi^{\om^N}_{a_n^N}(f) & = \om^N(a_n^N f)\\
    & = \sum_{m=1}^\infty \frac{1}{N} \mathrm{Tr}(a_n^N(m)f(m))\frac{1}{2^n} \\
    & = \frac{1}{N}\mathrm{Tr}(2^n I_Nf(n))\frac{1}{2^n}\\
    & = \frac{1}{N}\mathrm{Tr}(f(n))  
\end{align*}
as desired.
\end{proof}

\begin{lemma}\label{l:qm-calc}
    For every $n,m\in \N$ such that $n \neq m$, it holds that 
    \[
 d_{L_{\be}^N}^{\om^N}(a_n^N, a_m^N) \leq 2\max\{\be(n), \be(m)\}
    \]
    where $d_{L_{\be}^N}^{\om^N}$ is from Definition \ref{d:quantum-density-metric}, and thus, the sequence  $(a_n^N)_{n\in \N}$ is Cauchy with respect to $  d_{L_{\be}^N}^{\om^N}$.
\end{lemma}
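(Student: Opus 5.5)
Without loss of generality assume $n<m$; then, since $(\be(k))_{k}$ is decreasing, $\max\{\be(n),\be(m)\}=\be(n)$, and it suffices to prove $d_{L_{\be}^N}^{\om^N}(a_n^N,a_m^N)\leq 2\be(n)$. By Definition \ref{d:quantum-density-metric} and Lemma \ref{l:dirac},
\[
d_{L_{\be}^N}^{\om^N}(a_n^N,a_m^N)=\sup\left\{\left|\tfrac{1}{N}\mathrm{Tr}(f(n))-\tfrac{1}{N}\mathrm{Tr}(f(m))\right| : f\in sa(C_0(\N,M_N(\C))),\ L^N_\be(f)\leq 1\right\}.
\]
So we fix such an $f$, view it as an element of $C(\overline{\N},M_N(\C))$ with $f(\infty)=0$, and bound the quantity inside the supremum.

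The key step is to insert the conditional expectation $g=E_n^N(f)$. Since $g\in C_n^N$ and $m\geq n$, the definition of $C_n^N$ gives $g(m)=g(n)$. Hence
\[
f(n)-f(m)=\bigl(f(n)-g(n)\bigr)-\bigl(f(m)-g(m)\bigr).
\]
Next, for any $A\in M_N(\C)$ we have $\left|\tfrac{1}{N}\mathrm{Tr}(A)\right|\leq \|A\|_{M_N(\C)}$, because $\tfrac{1}{N}\mathrm{Tr}$ is a state on $M_N(\C)$. Applying this to each difference yields
\[
\left|\tfrac{1}{N}\mathrm{Tr}(f(n))-\tfrac{1}{N}\mathrm{Tr}(f(m))\right|\leq 2\|f-E_n^N(f)\|_\infty\leq 2\be(n)L^N_\be(f)\leq 2\be(n),
\]
where the middle inequality is exactly the definition of $L^N_\be$. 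Taking the supremum over $f$ gives the stated bound.

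For the Cauchy property, let $\ee>0$. Since $\be(k)\to 0$ and $\be$ is decreasing, we can choose $K$ with $2\be(K)<\ee$. Then for all distinct $n,m\geq K$,
\[
d_{L_{\be}^N}^{\om^N}(a_n^N,a_m^N)\leq 2\be(\min\{n,m\})\leq 2\be(K)<\ee.
\]

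The only delicate point is the indexing. The projection must be $E_n^N$ onto $C_n^N$ with $n$ the smaller index; otherwise $g$ need not agree at $n$ and $m$. This is also why the bound is stated in terms of the maximum of the $\be$'s, which equals $\be$ at the smaller index. The remaining ingredient, that $\tfrac{1}{N}\mathrm{Tr}$ is dominated by the operator norm, is standard.
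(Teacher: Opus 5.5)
Your proof is correct, but it takes a different and more direct route than the paper.

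The paper first handles $N=1$. It identifies $\vhi^{\om}_{a_n}$ with $\de_n$, passes to the unitization $C(\overline{\N})$ via \cite[Remark 2.9]{Latremoliere12b}, and imports the exact value $mk_{L_\be}(\de_n,\de_m)=2\max\{\be(n),\be(m)\}$ from \cite[Lemma 3.2]{Aguilar-vBH-L}. It then reaches general $N$ through Proposition \ref{p:amplified-cond-exp}. That proposition uses Tomiyama's theorem and uniqueness of the $\om^N$-preserving conditional expectation to identify $E_n^N$ as the entrywise amplification of $E_n$, which gives $L_\be(f_{k,k})\leq L_\be^N(f)$. The paper then bounds $\frac{1}{N}\sum_{k}|f_{k,k}(n)-f_{k,k}(m)|$ term by term.

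You instead stay with the matrix-valued $f$. You use only that $E_n^N$ takes values in $C_n^N$, so that $E_n^N(f)(n)=E_n^N(f)(m)$ for $m\geq n$. The definition of $L_\be^N$ and the fact that $\frac{1}{N}\mathrm{Tr}$ is a state then finish the estimate in one line.

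What your route buys: the lemma becomes self-contained. It no longer needs Proposition \ref{p:amplified-cond-exp}, whose only use in the paper is this lemma, nor the external citation. In effect it is the upper-bound half of \cite[Lemma 3.2]{Aguilar-vBH-L} carried out directly in the matrix setting. It would also work for any maps into $C_n^N$ used in the definition of the seminorm.

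What the paper's route adds is the knowledge that the estimate is sharp when $N=1$. The later results (Theorems \ref{t:non-uniform} and \ref{t:non-compact}) do not need this, since they only use the Cauchy property.

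Incidentally, neither your inequality nor the Cauchy conclusion requires $\be$ to be decreasing. The bound $\be(\min\{n,m\})\leq\max\{\be(n),\be(m)\}$ holds trivially, and $\be(k)\to 0$ alone gives the Cauchy property.
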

\begin{proof}
    Let $n,m \in \N$ such that $n\neq m$.  With respect to $C_0(\N)$, we have by Lemma \ref{l:dirac} that 
    \[
    d_{L_\be}^\om(a_n,a_m)=mk_{L_\be}(\de_n,\de_m).
    \]
  By \cite[Remark 2.9]{Latremoliere12b},
    \begin{align*}
    mk_{L_\be}(\de_n,\de_m)&= mk_{L_\be}(\wt{\de_n},\wt{\de_m})\\
    & =  \sup\{|\wt{\de_n}(f)-\wt{\de_m}(f)| : f\in sa(C(\overline{\N})), L_\be(f)\leq 1\}\\
    \end{align*} 
Next, let $f\in sa(C(\overline{\N}))$. Then $f=(f-f(\infty)\mathds{1})+f(\infty)\mathds{1}$, where $(f-f(\infty)\mathds{1})\in I_\infty$, 
 and
 \begin{align*}
 \wt{\de_n}(f)& =\de_n(f-f(\infty)1_{C(\overline{\N})})+f(\infty)\\
 & =f(n)-f(\infty)1_{C(\overline{\N})}(n)+f(\infty)\\
 & =f(n)-f(\infty)+f(\infty)=f(n)=\de_n(f),
 \end{align*}
 and similarly $\wt{\de_m}(f)=f(m)=\de_m(f)$.  
 Hence 
\begin{align*}
& \sup\{|\wt{\de_n}(f)-\wt{\de_m}(f)| : f\in sa(C(\overline{\N})), L_\be(f)\leq 1\}\\
& \quad \quad \quad =\sup\{|\de_n(f)-\de_m(f)| : f\in sa(C(\overline{\N})), L_\be(f)\leq 1\}
\end{align*}
Consequently,
    \begin{align*}    mk_{L_\be}(\de_n,\de_m)  =2\max \{\be(n), \be(m)\}
    \end{align*}
    by \cite[Lemma 3.2]{Aguilar-vBH-L}.

    Now, let $f\in C_0(\N, M_N(\C))$ such that $L_\be^N(f)\leq 1$. Thus, by Proposition \ref{p:amplified-cond-exp}, we have that $L_\be(f_{j,k})\leq 1$ for every $j,k\in \{1,2,\ldots, N\}$. By Lemma \ref{l:dirac}, we have   
    \begin{align*}
        \left| \vhi^{\om^N}_{a_n^N}(f)-\vhi^{\om^N}_{a_m^N}(f)\right|& =\left| \frac{1}{N}\mathrm{Tr}(f(n))-\frac{1}{N}\mathrm{Tr}(f(m))\right|\\
        & =\frac{1}{N}\left| \sum_{k=1}^N (f_{k,k}(n)- f_{k,k}(m))\right|\\
        & \leq \frac{1}{N} \sum_{k=1}^N \left| f_{k,k}(n)- f_{k,k}(m)\right|\\
        & = \frac{1}{N} \sum_{k=1}^N \left| \de_n(f_{k,k})- \de_m(f_{k,k})\right|\\
        & \leq  \frac{1}{N} \sum_{k=1}^N mk_{L_\be}(\de_n, \de_m)\\
        & =  mk_{L_\be}(\de_n, \de_m)\\
        & = 2\max\{\be(n), \be(m)\}.
    \end{align*}
    Hence 
    \[
   d_{L_{\be}^N}^{\om^N}(a_n^N, a_m^N)= mk_{L_\be^N}\left( \vhi^{\om^N}_{a_n^N},\vhi^{\om^N}_{a_m^N} \right)\leq 2\max\{\be(n), \be(m)\}.
    \]
    Thus, as $(\be(n))_{n\in \N}$ converges to $0$, the proof is complete.
\end{proof}
And now, one of our main results of this section.   
\begin{theorem}\label{t:non-uniform}
    The Bures metric $d_B^{\om^N}$ of Definition \ref{d:density-Bures} and the quantum metric $d_{L^N_\be}^{\om^N}$ of Definition \ref{d:quantum-density-metric} are not uniformly equivalent on $D_{\om^N}(C_0(\N, M_N(\C)))$.
\end{theorem}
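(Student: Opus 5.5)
We will use the sequence $(a_n^N)_{n\in\N}$ and argue by contradiction. Uniform equivalence would mean that the identity map is uniformly continuous in both directions. In particular, the identity
\[
\left(D_{\om^N}(C_0(\N, M_N(\C))), d_{L^N_\be}^{\om^N}\right)\longrightarrow \left(D_{\om^N}(C_0(\N, M_N(\C))), d_B^{\om^N}\right)
\]
would be uniformly continuous. Uniformly continuous maps send Cauchy sequences to Cauchy sequences. By Lemma \ref{l:qm-calc}, $(a_n^N)_{n\in\N}$ is Cauchy with respect to $d_{L^N_\be}^{\om^N}$, since $d_{L^N_\be}^{\om^N}(a_n^N,a_m^N)\leq 2\max\{\be(n),\be(m)\}\to 0$. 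It therefore suffices to show that $(a_n^N)_{n\in\N}$ is not Cauchy for the Bures metric.

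For that step we compute $d_B^{\om^N}(a_n^N,a_m^N)$ for $n\neq m$, following Example \ref{e:quantized}. The functions $a_n^N$ and $a_m^N$ have disjoint supports $\{n\}$ and $\{m\}$. Square roots are taken pointwise, so $\sqrt{a_n^N}(x)=2^{n/2}\chi_n(x)I_N$, and hence $\sqrt{a_n^N}\sqrt{a_m^N}=0$. Consequently
\[
F_{\om^N}(a_n^N,a_m^N)=\om^N(0)=0 \quad\text{and}\quad d_B^{\om^N}(a_n^N,a_m^N)=1 \quad\text{for all } n\neq m.
\]
We will justify the pointwise square root in $C_0(\N,M_N(\C))$ by noting that $2^{n/2}\chi_nI_N$ is positive and its square is $a_n^N$, so it equals $\sqrt{a_n^N}$ by uniqueness of the positive square root. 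With this, the sequence is not Bures-Cauchy, which contradicts uniform continuity. We can also phrase the argument directly with $\ee=1/2$: for every $\de>0$, choose $n\neq m$ large enough that $2\max\{\be(n),\be(m)\}<\de$, while the Bures distance between $a_n^N$ and $a_m^N$ is still $1$.

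The main point to handle carefully is that Lemma \ref{l:qm-calc} only gives an upper bound on the quantum metric. That is exactly the direction we need, so no lower bound is required. The remaining care is in the functional calculus computation of $|\sqrt{x}\sqrt{y}|$ in the matrix-valued setting, which is straightforward here because the product is zero.
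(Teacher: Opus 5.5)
Your proof is correct and is essentially the paper's argument. The sequence $(a_n^N)_{n\in\N}$ is Cauchy for $d_{L^N_\be}^{\om^N}$ by Lemma \ref{l:qm-calc}, yet has pairwise Bures distance $1$, so the identity map from the quantum metric to the Bures metric is not uniformly continuous. The only difference is that you verify $d_B^{\om^N}(a_n^N,a_m^N)=1$ directly via the pointwise square roots, whereas the paper cites Example \ref{e:quantized-N} to reduce this to the scalar computation.
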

\begin{proof}
    By Example \ref{e:quantized-N}, we have that $(a^N_n)_{n\in \N}$ is not Cauchy with respect to $d_B^{\om^N}$ as $d_B^{\om^N}(a^N_n,a^N_m)=d^\om_B(a_n,a_m)=1$ for every $n,m\in \N,n \neq m$. However, $(a^N_n)_{n\in \N}$ is   Cauchy with respect to $d_{L^N_\be}^{\om^N}$ by Lemma \ref{l:qm-calc}. 
    Hence the identity map from \[(D_{\om^N}(C_0(\N, M_N(\C))),d_{L^N_\be}^{\om^N}) \] to  \[(D_{\om^N}(C_0(\N, M_N(\C))), d_B^{\om^N})\] is not uniformly continuous.
\end{proof}
We also show that the quantum metric   is not uniformly equivalent to the C*-norm induced metric on the density space.

\begin{theorem}
    The metric induced by the C*-norm and the  quantum metric $d_{L^N_\be}^{\om^N}$ of Definition \ref{d:quantum-density-metric} are not uniformly equivalent on $D_{\om^N}(C_0(\N, M_N(\C)))$.
\end{theorem}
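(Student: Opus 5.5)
The plan is to reuse the sequence $(a_n^N)_{n\in \N}$ and argue exactly as in the proof of Theorem \ref{t:non-uniform}. By Lemma \ref{l:qm-calc}, $(a_n^N)_{n\in \N}$ is Cauchy with respect to $d_{L^N_\be}^{\om^N}$, with $d_{L^N_\be}^{\om^N}(a_n^N,a_m^N)\leq 2\max\{\be(n),\be(m)\}\to 0$. So it suffices to show that $(a_n^N)_{n\in \N}$ is not Cauchy with respect to the C*-norm metric $d_{C_0(\N,M_N(\C))}$. Uniformly continuous maps send Cauchy sequences to Cauchy sequences, so this will show that the identity map
\[
(D_{\om^N}(C_0(\N, M_N(\C))),d_{L^N_\be}^{\om^N}) \longrightarrow (D_{\om^N}(C_0(\N, M_N(\C))), d_{C_0(\N,M_N(\C))})
\]
is not uniformly continuous. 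Hence the two metrics are not uniformly equivalent.

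For the C*-norm computation, let $n,m\in \N$ with $n\neq m$, say $n<m$. The function $a_n^N-a_m^N$ takes the value $2^nI_N$ at $n$, the value $-2^mI_N$ at $m$, and $0$ elsewhere. Therefore
\[
\|a_n^N-a_m^N\|_\infty=\max\{2^n,2^m\}=2^{\max\{n,m\}}\geq 2.
\]
In fact this distance is unbounded, so the sequence is certainly not Cauchy in the C*-norm metric. This also shows the two metrics are not even bi-Lipschitz equivalent, since the quantum metric is bounded by Theorem \ref{t:qlt-metric}.

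An alternative, even quicker, route is to avoid Cauchy sequences altogether. The C*-norm metric is unbounded on the density space, as the computation above shows, while $d_{L^N_\be}^{\om^N}$ is bounded. However, boundedness alone does not rule out uniform equivalence, because a bounded metric can be uniformly equivalent to an unbounded one. So I will rely on the Cauchy argument, which is decisive.

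There is no real obstacle here. The only care needed is the direction of the failing map, namely from the quantum metric to the C*-norm metric, together with the observation that uniform continuity preserves Cauchy sequences.
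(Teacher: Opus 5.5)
Your proposal is correct and matches the paper's proof: $(a_n^N)_{n\in\N}$ is Cauchy for $d_{L^N_\be}^{\om^N}$ by Lemma \ref{l:qm-calc} but not Cauchy for the C*-norm metric, so the identity map from the quantum metric to the C*-norm metric is not uniformly continuous. Your explicit computation $\|a_n^N-a_m^N\|_\infty=2^{\max\{n,m\}}$ simply makes precise the paper's remark that the sequence is unbounded (hence not Cauchy) in the C*-norm.
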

\begin{proof}
    We have that $(a^N_n)_{n\in \N}$ is   Cauchy with respect to $d_{L^N_\be}^{\om^N}$ by Lemma \ref{l:qm-calc}. However, the sequence $(a^N_n)_{n\in \N}$ is unbounded with respect to the C*-norm $\|\cdot\|_\infty$, and is therefore not Cauchy with respect to its induced metric.  Hence the identity map from \[(D_{\om^N}(C_0(\N, M_N(\C))),d_{L^N_\be}^{\om^N}) \] to  \[(D_{\om^N}(C_0(\N, M_N(\C))), d_\A),\] where $d_\A$ is the C*-norm induced metric, is not uniformly continuous.
\end{proof}

For our final result, which is that $(D_{\om^N}(C_0(\N, M_N(\C))),d_{L^N_\be}^{\om^N})$ is not compact, we do a little more work, and first show that convergence with respect to the quantum metric $d_{L^N_\be}^{\om^N}$ is stronger than pointwise convergence.

\begin{proposition}\label{p:qm-pointwise}
    Let $(f_n)_{n\in \N}$ be a sequence in $D_{\om^N}(C_0(\N, M_N(\C)))$ and let $f\in D_{\om^N}(C_0(\N, M_N(\C))))$. If $(f_n)_{n\in \N}$ converges to $f$ with respect to $d_{L^N_\be}^{\om^N}$ of Definition \ref{d:quantum-density-metric}, then $(f_n)_{n\in \N}$ converges pointwise to $f$.
\end{proposition}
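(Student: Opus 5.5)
The idea is to test the states $\vhi^{\om^N}_{f_n}$ against a single well-chosen finitely supported function: one that has finite $L^N_\be$-seminorm and isolates the value at a point $m$ in a fixed matrix direction. Convergence in $d_{L^N_\be}^{\om^N}$ then forces $\om^N(f_n g)\to \om^N(fg)$ for every such $g$. Since $\om^N(hg)=\sum_x \frac{1}{N}\mathrm{Tr}(h(x)g(x))2^{-x}$, this recovers the entries $h(m)_{k,j}$.

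Fix $m\in\N$ and $j,k\in\{1,\dots,N\}$. I would take $g=\chi_m e_{j,k}$, where $e_{j,k}$ is a matrix unit, and work with its self-adjoint parts. Since $mk_L$ is defined only over self-adjoint elements, I would use $h_1=\chi_m(e_{j,k}+e_{k,j})$ and $h_2=\chi_m\, i(e_{j,k}-e_{k,j})$ (and $\chi_m e_{k,k}$ when $j=k$). Each of these lies in $sa(C_0(\N,M_N(\C)))$ and in $C_m^N$ (it is constant, namely zero, from $m+1$ on).

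The key step is that $L^N_\be(h)<\infty$ for any $h\in C_M^N$. For $n\geq M$ we have $E_n^N(h)=h$, so those terms vanish. For $n<M$ there are only finitely many terms, each at most $\|h-E_n^N(h)\|_\infty/\be(n)\leq 2\|h\|_\infty/\be(n)$, because the conditional expectation is contractive. Hence $L^N_\be(h)\leq 2\|h\|_\infty\max_{n<M}\be(n)^{-1}<\infty$.

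Set $c=L^N_\be(h)$. If $c=0$, then $h$ is a multiple of the unit, which is impossible for nonzero $h$ in $C_0$, so $c>0$ whenever $h\neq 0$. Then $h/c$ is admissible in the supremum defining $mk_{L^N_\be}$, which gives
\[
|\om^N(f_nh)-\om^N(fh)|\leq c\, d_{L^N_\be}^{\om^N}(f_n,f)\to 0.
\]
Computing directly, $\om^N(f_n h_1)=\frac{1}{N2^m}\bigl(f_n(m)_{k,j}+f_n(m)_{j,k}\bigr)$, and similarly $h_2$ yields $\frac{i}{N2^m}\bigl(f_n(m)_{k,j}-f_n(m)_{j,k}\bigr)$. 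Taking linear combinations gives $f_n(m)_{j,k}\to f(m)_{j,k}$ for all $j,k$. So $f_n(m)\to f(m)$ entrywise, hence in operator norm, for every $m$; that is, $f_n$ converges to $f$ pointwise.

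The main obstacle is the bound on $L^N_\be$ for finitely supported functions. Everything hinges on the fact that $E_n^N$ fixes $C_n^N$ and that $C_M^N\subseteq C_n^N$ for $n\geq M$; I need $C_n^N$ to be increasing in $n$, which is clear from the definition. The rest is bookkeeping with the explicit formula for $\om^N$.
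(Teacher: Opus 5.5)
Your proof is correct and follows the paper's approach. You test against $\chi_M$ times a Hermitian matrix, get $0<L^N_\be<\infty$ because $E^N_k$ fixes this function for all large $k$ and it is not a multiple of the unit, normalize, and read off the entries of $f_n(M)$ from traces; where the paper uses an arbitrary Hermitian $H$ plus a spanning argument, you use the explicit combinations $e_{j,k}+e_{k,j}$ and $i(e_{j,k}-e_{k,j})$. One harmless slip, which the paper's proof shares: $\chi_m(e_{j,k}+e_{k,j})$ is zero only from $m+1$ on, so it lies in $C^N_{m+1}$ rather than $C^N_m$, and your bound should read $L^N_\be(h)\leq 2\|h\|_\infty\max_{n\leq m}\be(n)^{-1}$, which changes nothing else.
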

\begin{proof}
  Let $M\in \N$.  Fix a non-zero Hermitian $H\in M_N(\C)$. Let $\ee>0$. For every $x\in \N$, define
    \[
    \chi_{M,H}^N(x)=\begin{cases}
        H & x=M\\
        0 & x\neq M.
    \end{cases}
    \]Note that $\chi_{M,H}^N\in sa(C_0(\N, M_N(\C)))$. Moreover, as $\chi_{M,H}^N \in C_k^N$ for all $k \geq M$, we have that $\|\chi_{M,H}^N-E_k^N(\chi_{M,H}^N)\|_\infty=\|\chi_{M,H}^N-\chi_{M,H}^N\|_\infty=0$ for every $k \geq M$ as $E_k^N$ is a projection onto $C_k^N$. Thus $L_\be^N(\chi_{M,H}^N)<\infty$ by definition. Furthermore, as $\chi_{M,H}^N$ is not a scalar multiple of $1_{C(\overline{\N}, M_N(\C))}$, we have that $L_\be^N(\chi_{M,H}^N)>0$.
     
        Choose $P\in \N$ such that 
    \[
    d_{L_\be^N}^{\om^N}(f_n,f)< \frac{\ee}{N2^ML_\be^N (\chi_{M,H}^N)}
    \]
    for every $n \geq P$. Let $n \geq P$. Now, set 
    \[
    g=\frac{1}{L_\be^N(\chi_{M,H}^N)}\chi_{M,H}^N\in sa(C_0(\N, M_N(\C)))
    \]
    as $H$ is Hermitian, note that 
    and $L_\be^N(g)=1$ by construction. Thus, by definition of  $ d_{L^N_\be}^{\om^N}$, we have
that 
\[
 |\vhi^{\om^N}_{f_n}(g)-\vhi^{\om^N}_{f }(g)| \leq d_{L_\be^N}^{\om^N}(f_n,f).
\]
Hence
\begin{align*}
  &  |\mathrm{Tr}(f_n(M)H)-\mathrm{Tr}(f(M)H)|\\
    &=\frac{N2^ML_\be^N (\chi_{M,H}^N)}{N2^ML^N_\be (\chi_{M,H}^N)}|\mathrm{Tr}(f_n(M)H)-\mathrm{Tr}(f(M)H)|\\
    & = \frac{N2^ML_\be^N (\chi_{M,H}^N)}{N2^ML^N_\be (\chi_{M,H}^N)}|\mathrm{Tr}(f_n(M)\chi_{M,H}^N(M))-\mathrm{Tr}(f(M)\chi_{M,H}^N(M))|\\
    & = N2^ML_\be^N (\chi_{M,H}^N)\left|\frac{1}{N}\mathrm{Tr}(f_n(M)g(M))\frac{1}{2^M}-\frac{1}{N}\mathrm{Tr}(f(M)g(M))\frac{1}{2^M}\right|\\
    & =  N2^ML_\be^N (\chi_{M,H}^N)\left| \sum_{m=1}^\infty \frac{1}{N}\mathrm{Tr}(f_n(m)g(m))\frac{1}{2^m}-\sum_{m=1}^\infty \frac{1}{N}\mathrm{Tr}(f(m)g(m))\frac{1}{2^m}\right|\\
    & =N2^ML_\be^N (\chi_{M,H}^N)\left| \om^N(f_ng)-\om^N(fg)\right|\\
    & = N2^ML_\be^N (\chi_{M,H}^N)|\vhi^{\om^N}_{f_n}(g)-\vhi^{\om^N}_{f }(g)|\\
    & \leq  N2^ML_\be^N (\chi_{M,H}^N)\cdot d_{L_\be^N}^{\om^N}(f_n,f)\\
    & < N2^ML_\be^N (\chi_{M,H}^N) \cdot \frac{\ee}{N2^ML_\be^N (\chi_{M,H}^N)}=\ee.
\end{align*}
   Therefore, $(\mathrm{Tr}(f_n(M)H))_{n\in \N}$ converges to $\mathrm{Tr}(f(M)H)$. 
   
   Thus, \[(\mathrm{Tr}(f_n(M)H))_{n\in \N} \text{ converges to } \mathrm{Tr}(f(M)H)\] for any   self-adjoint $H\in M_N(\C)$. Hence, 
by linearity of $\mathrm{Tr}$ and the fact that the Hermitian matrices span $M_N(\C)$, we have that $(\mathrm{Tr}(f_n(M)A))_{n\in \N}$ converges to $\mathrm{Tr}(f(M)A)$ for any $A\in M_N(\C)$. Let $j,k\in \{1,2,\ldots,N\}$.  Let $E_{k,j}^N$ be the element in $M_N(\C)$ with $1$ in the $(k,j)$-entry and $0$ elsewhere. Since $\mathrm{Tr}(BE_{k,j})=B_{j,k}$ for every $B\in M_N(\C)$, we have that $(f_n(M)_{j,k})_{n\in \N}$ converges to $f(M)_{j,k}$. Thus, since operator norm convergence is equivalent to coordinate-wise convergence, we have that $(f_n(M))_{n\in \N}$ converges to $f(M)$ as desired.
\end{proof}

We can now prove our final main result for this paper.
\begin{theorem}\label{t:non-compact}
    The metric space $(D_{\om^N}(C_0(\N, M_N(\C))), d_{L^N_\be}^{\om^N})$ is not compact.
\end{theorem}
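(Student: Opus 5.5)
We show that the sequence $(a_n^N)_{n\in \N}$ introduced before Lemma \ref{l:dirac} has no subsequence converging in $(D_{\om^N}(C_0(\N, M_N(\C))), d_{L^N_\be}^{\om^N})$. Since this is a metric space, sequential compactness is equivalent to compactness, so this proves the theorem.

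Suppose, for contradiction, that some subsequence $(a^N_{n_k})_{k\in\N}$ converges with respect to $d_{L^N_\be}^{\om^N}$ to some $f\in D_{\om^N}(C_0(\N, M_N(\C)))$. By Proposition \ref{p:qm-pointwise}, the subsequence then converges pointwise to $f$.

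Now fix $M\in \N$. Because $n_k\to\infty$, we have $n_k>M$ for all sufficiently large $k$. For those $k$, $a^N_{n_k}(M)=2^{n_k}\chi_{n_k}(M)I_N=0$. Hence $f(M)=\lim_k a^N_{n_k}(M)=0$. Since $M$ was arbitrary, $f=0$ identically on $\N$.

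This gives $\om^N(f)=0$. But $f\in D_{\om^N}(C_0(\N, M_N(\C)))$ requires $\om^N(f)=1$, a contradiction. So no subsequence of $(a^N_n)_{n\in\N}$ converges in the density space, and the density space is not compact.

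The only substantive input is Proposition \ref{p:qm-pointwise}, which is already established. The remaining point to keep straight is that the limit must lie in $D_{\om^N}$ itself rather than in some completion. This is exactly why the sequence can be Cauchy (Lemma \ref{l:qm-calc}) yet fail to converge: its natural limit would be the state at $\infty$, which does not come from any density element.
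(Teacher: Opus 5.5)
Your argument is correct and is essentially the paper's proof: both apply Proposition \ref{p:qm-pointwise} to force any $d_{L^N_\be}^{\om^N}$-limit of $(a^N_n)_{n\in\N}$ to be the zero function, which contradicts $\om^N(f)=1$. The only difference is that the paper first uses the Cauchy property from Lemma \ref{l:qm-calc} to pass from a convergent subsequence to convergence of the whole sequence; you rightly skip this step, since Proposition \ref{p:qm-pointwise} applies to the subsequence directly.
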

\begin{proof}
    Assume by way of contradiction that \[(D_{\om^N}(C_0(\N, M_N(\C))), d_{L^N_\be}^{\om^N})\] is compact. Then the sequence $(a^N_n)_{n\in \N}$ has a converging subsequence converging to some $f\in D_{\om^N}(C_0(\N, M_N(\C)))$ with respect to $ d_{L^N_\be}^{\om^N}. $  In fact, since $(a^N_n)_{n\in \N}$ is Cauchy with respect to $d_{L^N_\be}^{\om^N}$ by Lemma \ref{l:qm-calc}, we have that $(a^N_n)_{n\in \N}$ converges to $f$ with respect to $d_{L^N_\be}^{\om^N}$. Hence \[(a^N_n)_{n\in \N} \text{ converges pointwise to } f\] by Proposition \ref{p:qm-pointwise}. However, $(a^N_n)_{n\in \N}$ converges pointwise to the constant $0$ function, which is necessarily $f$ since pointwise limits are unique for functions with Hausdorff codomain. On the other hand,  since $f\in D_{\om^N}(C_0(\N, M_N(\C)))$, we have  that   \[1=\om^N(f)=\om^N(0)=0,\] which is a contradiction. Thus, the metric space $(D_{\om^N}(C_0(\N, M_N(\C))), d_{L^N_\be}^{\om^N})$ is not compact. 
\end{proof}

\bibliographystyle{amsplain}
\bibliography{thesis}

 \vfill
\end{document}